\documentclass[a4paper]{article}
\usepackage{geometry}                
\usepackage{mathtools}
\usepackage{amssymb}
\usepackage{amsmath}
\usepackage{amsthm,dsfont,yfonts}
\usepackage{rotating}
\usepackage{graphicx,pspicture}
\usepackage{array}
\usepackage{calc}
\usepackage{soul}
\usepackage[all]{xy}
\usepackage{amsthm}
\usepackage{float}
\usepackage{lmodern}
\usepackage{color}
\usepackage{pstricks,pst-node,pst-tree}
\usepackage{graphics,graphicx}
\usepackage[british]{babel}
\usepackage{fancyhdr}
\usepackage{textcomp}
\usepackage{enumerate}

\DeclareGraphicsRule{.tif}{png}{.png}{`convert #1 `dirname #1`/`basename #1 .tif`.png}

\newtheorem{theorem}{Theorem}[section]
\newtheorem{cor}[theorem]{Corollary}
\newtheorem{lemma}[theorem]{Lemma}
\newtheorem{prop}[theorem]{Proposition}

\theoremstyle{definition}
\newtheorem{definition}[theorem]{Definition}

\theoremstyle{remark}
\newtheorem{remark}[theorem]{Remark}

\newcommand{\Aut}{\operatorname{Aut}}
\newcommand{\rst}{\operatorname{rst}}
\newcommand{\St}{\operatorname{St}}
\newcommand{\Sym}{\operatorname{Sym}}

\bibliographystyle{alpha}

\newcommand{\h}{\hspace{2mm}}  

\usepackage[noindentafter]{titlesec}
\parindent0pt

\title{A finitely generated branch group of exponential growth without free subgroups}
\date{}
\author{Elisabeth Fink}

\pagestyle{fancyplain}

\begin{document}

\selectlanguage{british}

\maketitle

\begin{abstract}
We will give an example of a branch group $G$ that has exponential growth but does not contain any non-abelian free
subgroups. This answers question 16 from \cite{Bartholdi} positively. The proof demonstrates how to construct a
non-trivial word $w_{a,b}(x,y)$ for any $a,b \in G$ such that
$w_{a,b}(a,b) = 1$.
The group $G$ is not just infinite. We prove that every
normal subgroup of $G$ is finitely generated as an abstract group and every proper quotient soluble. Further, $G$ has
infinite virtual first Betti number but is not large.\end{abstract}

\section{Introduction}

Groups acting on infinite rooted trees have provided remarkable examples in the last decades. Starting with Grigorchuk's
group in \cite{grigor_1} of intermediate growth branch groups received more and more attention. A standard
introduction to this topic is the survey \cite{Bartholdi} by Bartholdi, Grigorchuk and Sunik.
In their section on open questions the authors ask whether there exist branch groups which have exponential word growth
but do not contain any non-abelian free subgroups. We answer this question affirmatively by constructing explicit words
$w_{a,b}(x,y)$ for any $a,b \in G$ such that $w_{a,b}(a,b)=1$. It is a result by Grigorchuk and Zuk \cite{grigor_zuk} that the weakly branch Basilica group has exponential growth but no free subgroups. Sidki and Wilson constructed in \cite{wilson} branch
groups that contain free subgroups and hence have exponential growth. Nekrashevych proved in \cite{nekra} that branch
groups containing free subgroups fall into one of two cases. A paper by Brieussel \cite{brieussel_2} gives examples
of groups that have a given oscillation behaviour of intermediate growth rate. Work by Bartholdi and Erschler
\cite{erschler} provides examples of groups that have a given intermediate growth rate. In \cite{grigor_sunic}
Grigorchuk and Sunik prove that the Hanoi tower group on three pegs is amenable but that its Schreier graph has
exponential diameter growth. Recent work by Wilson \cite{wilson2} shows that if a finitely generated residually soluble
group has growth strictly less than $2^{n^{1/6}}$ then it has polynomial growth.

\medskip

The group $G$ in this paper will depend on an infinite sequence of primes. In order to establish that $G$ has
exponential growth and no free subgroups we have to make restrictions on this sequence. If we weaken those assumptions
we can prove by other means that $G$ is not large. We do not know whether these restrictions are necessary. We also do
not know whether our group $G$ is amenable. Motivated by a result
of Brieussel \cite{brieussel1}, we suspect that this could hold at least if the sequence of primes grows
slowly. Consideration of the abelianization of certain normal subgroups shows that $G$ has infinite virtual first Betti
number.

\medskip

Most of the examples studied in the literature are groups acting on regular, rooted, spherically transitive trees. In
this paper we look at finitely generated automorphism groups of an irregular rooted tree. A similar class of examples
was
first mentioned by Segal in \cite{segal_fifg}. A related construction was investigated by Woryna \cite{woryna}
and Bondarenko \cite{bondarenko} where the authors describe generating sets of infinite iterated wreath products.

\section{Rooted Trees and Automorphisms}

In this section we will recall some of the notation and definitions from \cite{Bartholdi} and \cite{segal_fifg}.

\subsection{Trees}

A \emph{tree} is a connected graph which has no non-trivial cycles. If $T$ has a distinguished \emph{root} vertex $r$
it is called a \emph{rooted tree}. The distance of a vertex $v$ from the root is given by the length of the path from
$r$ to $v$ and called the \emph{norm} of $v$. The number \[d_v = | \{e \in E(T): e=\left(v_1, v_2\right), v = v_1
\textnormal{ or } v=v_2\}|\] is called the \emph{degree} of $v \in V(T)$. The tree is called \emph{spherically
homogeneous} if vertices of the same norm have the same degree. Let $\Omega(n)$ denote the set of vertices of distance
$n$ from the root. This set is called the $n$-th level of $T$. A spherically homogeneous tree $T$ is determined by,
depending on the tree, a finite or infinite sequence $\bar{l}=\left\{l_n\right\}_{n=1}$ where $l_n+1$ is the degree of
the vertices on level $n$ for $n \geq 1$. The root has degree $l_0$. Hence each level $\Omega(n)$ has $\prod_{i=0}^{n-1}
l_i$ vertices. Let us denote this number by $m_n = |\Omega(n)|$. We denote such a tree by $T_{\bar{l}}$. A tree is
called \emph{regular} if $l_i = l_{i+1}$ for all $i \in \mathbb{N}$. Let $T[n]$ denote the finite tree where all
vertices have norm less or equal to $n$ and write $T_v$ for the subtree of $T$ with root $v$.
For all vertices $v,u \in \Omega(n)$ we have that $T_u \simeq T_v$. Denote a tree isomorphic to $T_v$ for $v \in
\Omega(n)$ by $T_n$. This will be the tree with defining sequence $\left(l_n, l_{n+1}, \dots \right)$. To each sequence
$\bar{l}$ we associate a sequence $\left\{X_n\right\}_{n \in \mathbb{N}}$ of alphabets where $X_n=\left\{v_1^{(n)},
\dots, v_{l_n}^{(n)}\right\}$ is an $l_n$-tuple so that $|X_n|=l_n$.  A path beginning at the root of length $n$ in
$T_{\bar{l}}$ is identified with the sequence ${x_1,\dots, x_i, \dots, x_n}$ where $x_i \in X_i$ and infinite paths are
identified in a natural way with infinite sequences. Vertices will be identified with finite strings in the alphabets
$X_i$. Vertices on level $n$ can be written as elements of $Y_n  = X_0 \times \dots \times X_{n-1}$. Alphabets induce
the lexicographic order on the paths of a tree and therefore the vertices.

\subsection{Automorphisms}

An \emph{automorphism} of a rooted tree $T$ is a bijection from $V(T)$ to $V(T)$ that preserves edge incidence and the
distinguished root vertex $r$. The set of all such bijections is denoted by $\Aut T$. This group induces an imprimitive
permutation on $\Omega(n)$ for each $n \geq 2$. Consider an element $g \in \Aut(T)$.  Let $y$ be a letter from $Y_n$,
hence a vertex of $T[n]$ and $z$ a vertex of $T_n$. Then $g(y)$ induces a vertex permutation $g_y$ of $Y_n$. If we
denote
the image of $z$ under $g_y$ by $g_y(z)$ then \[g(yz)= g(y)
g_y(z).\]

\medskip

With any group $G \leq \Aut T$ we associate the subgroups \[\St_G(u)=\left\{g \in G: u^g=u\right\},\] the
\emph{stabilizer} of a vertex $u$. Then the subgroup \[\St_G(n)=\bigcap_{u \in \Omega(n)} \St_G(u)\] is called the
\emph{$n$-th level stabilizer} and it fixes all vertices on the $n$-th level. Another important class of subgroups
associated with $G \leq \Aut T$ consists of the \emph{rigid vertex stabilizers} \[\rst_G(u)=\left\{g \in G: \forall v
\in
V(T)
\setminus V(T_u): v^g=v\right\}.\]
The subgroup \[\rst_G(n)= \rst_G(u_1) \times \dots \times \rst_G(u_{m_n})\] is called the \emph{$n$-th level rigid
stabilizer}. Obviously $\rst_G(n) \leq \St_G(n)$.

\begin{definition}
Let $G$ be a subgroup of $\Aut(T)$ where $T$ is as above. We say that $G$ acts on $T$ as \emph{branch
group} if it acts transitively on the vertices of each level of $T$ and $\rst_G(n)$ has finite index for all $n \in
\mathbb{N}$.
\end{definition}
The definition implies that branch groups are infinite and residually finite groups. We can specify an automorphism $g$
of $T$ that fixes all vertices of level $n$ by writing $g = \left(g_1, g_2, \dots, g_{m_n}\right)_n$ with $g_i \in
\Aut\left(T_n\right)$ where the subscript $n$ of the bracket indicates that we are on level $n$. Each automorphism can
be written as $g = \left(g_1, g_2, \dots, g_{m_n}\right)_n \cdot \alpha $
with $g_i \in \Aut\left(T_n\right)$ and $\alpha$ an element of $\Sym\left(l_{n-1}\right) \wr \dots \wr
\Sym\left(l_0\right)$. Automorphisms acting only on level $1$ by permutation are called \emph{rooted automorphisms}. We
can identify those with elements of $\Sym\left(l_0\right)$.

\section{The Construction}

In this subsection we describe the main construction of the group. The trees in this paper will have a defining
sequence $\left\{ l_i \right\}_{i \in \mathbb{N}}$ where all $l_i$ are pairwise distinct primes
greater or equal than $7$. This essentially ascending valency will prove to be the key to the exponential growth and the
non-existence of non-abelian free subgroups. The group $G$ constructed here is finitely generated, but recursively
presented. We shall prove that for every normal
subgroup $N \neq 1$, $N$ is finitely generated as an abstract group and that $G/N$ is soluble.

\subsection{The Generators}

Let $\left\{l_i\right\}_{i \in \mathbb{N}}$ be a sequence of finite cyclic groups $\{A_i\}_{i \in \mathbb{N}} $ of
pairwise coprime orders $l_i=|A_i|$ where
$l_i \geq
7$. Fix a generator $a_i$ for each $A_i$. Let us consider the rooted tree with defining sequence $\{l_i\}_{i \in
\mathbb{N}}$ and recall \[m_n = \prod_{i=0}^{n-1} l_i.\] Then each layer $n$ has $m_n$ vertices, given by the set
$\Omega(n)$. We study the group \[G=\left<a_0, b\right> \h \h\] where $a_0$ is the chosen generator of $A_0$
acting as rooted
automorphism and $b$ is recursively defined on each level $n$ by 
\[b_n=\left(b_{n+1},a_{n+1},1,\ldots,1\right)_n\] where $a_n$ is the generator of the group $A_n$. This means the
action
on the first vertex of level $1$ is given by $b_{n+1}$ and the action on the second vertex by the rooted automorphism
$a_{n+1}$. Figure \ref{figure_b} shows the action of the automorphism $b$ on the tree. The action of $b$ on all
unlabelled vertices $v$ in the figure will be given by the identity on $T_u$.

\begin{figure}[ht!]
\centering
\includegraphics[scale=0.8]{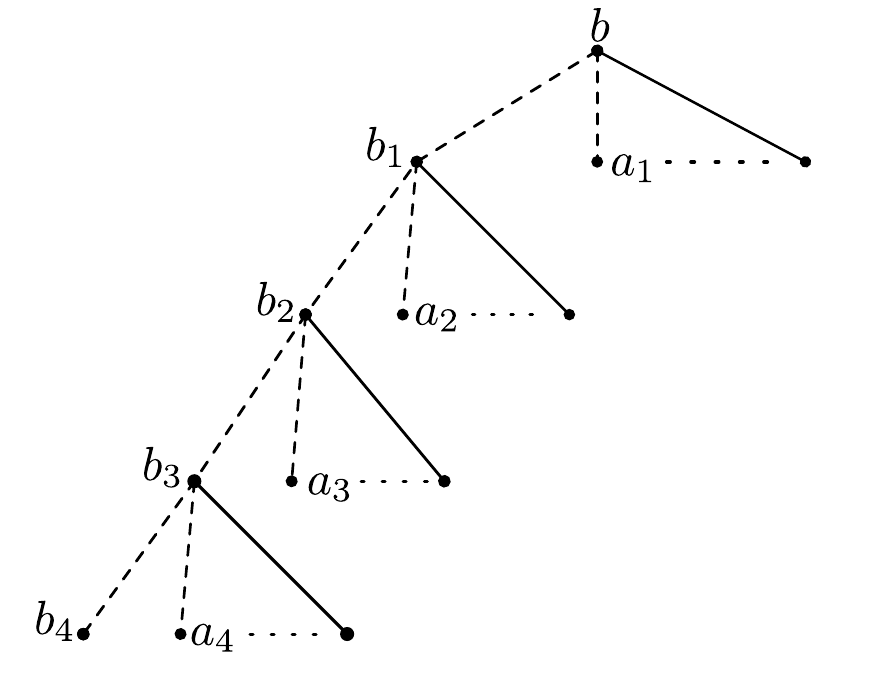}
\caption{The automorphism $b$.}\label{figure_b}
\end{figure}

\begin{prop}$G$ acts as the iterated wreath product $A_{n-1} \wr \dots \wr A_1 \wr A_0$ on the set $\Omega(n)$ of $m_n$
vertices of each level $n$.
\end{prop}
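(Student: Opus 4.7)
The plan is to proceed by induction on $n$. Write $W_k := A_{k-1} \wr \cdots \wr A_0$, considered as the automorphism group of $T[k]$ whose child-permutation at every level-$j$ vertex lies in the cyclic subgroup $A_j \leq \Sym(l_j)$. The base case $n=1$ is immediate: the generator $a_0$ acts as the full cyclic rotation of $\Omega(1) = A_0$, while $b$ fixes $\Omega(1)$, so the image of $G$ in $\Sym(\Omega(1))$ equals $A_0 = W_1$.

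For the inductive step, assume the image $H_n$ of $G$ acting on $\Omega(n)$ coincides with $W_n$. The action on $\Omega(n+1)$ takes values in $W_{n+1}$, and the restriction $T[n+1] \to T[n]$ induces the split exact sequence
\[
1 \longrightarrow A_n^{m_n} \longrightarrow W_{n+1} \longrightarrow W_n \longrightarrow 1.
\]
Writing $H_{n+1}$ for the image of $G$ in $W_{n+1}$, the composition with the quotient map gives $H_n = W_n$ by induction, so $H_{n+1}$ surjects onto $W_n$. It therefore suffices to show that $H_{n+1} \cap A_n^{m_n}$ equals the full base group $A_n^{m_n}$.

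The heart of the argument is the production of a single element of this intersection that is a rooted generator of $A_n$ at one specific coordinate and trivial elsewhere. I would take
\[
g := b^{P_n}, \qquad P_n := l_1 l_2 \cdots l_{n-1}.
\]
Unwinding the recursion $b_k = (b_{k+1}, a_{k+1}, 1, \ldots, 1)_k$ level by level shows that the child-permutation of $b$ is trivial at every vertex except at $v_1^{(0)} v_1^{(1)} \cdots v_1^{(k-2)} v_2^{(k-1)}$ for $1 \leq k \leq n$, where it equals the rooted generator $a_k \in A_k$. Because $b$ fixes the first level of $T$, powering commutes with restriction to the level-$1$ subtrees, and the computation propagates down recursively. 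Since the orders $l_i$ are pairwise coprime, $l_k \mid P_n$ for every $k < n$ (so the level-$k$ components vanish after the power), while $\gcd(P_n, l_n) = 1$ (so $a_n^{P_n}$ is still a generator), and $b_n$ continues to fix its own first level (so the all-ones level-$n$ subtree contributes nothing). The net effect is that $g$ lies in $A_n^{m_n}$, with the single nontrivial coordinate $a_n^{P_n}$ at the level-$n$ vertex $v_1^{(0)} \cdots v_1^{(n-2)} v_2^{(n-1)}$.

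The proof then closes by a standard transitivity argument: the subgroup $H_{n+1} \cap A_n^{m_n}$ is normal in $H_{n+1}$, and the induced conjugation action on $A_n^{m_n}$ factors through the $W_n$-action on the $m_n$ coordinates by permutation of $\Omega(n)$, which is transitive by the inductive hypothesis. Conjugating $g$ by preimages in $G$ of coordinate-shifting elements of $W_n$ produces a generator of $A_n$ at every coordinate, and these collectively generate $A_n^{m_n}$. The main technical hurdle is precisely the level-by-level bookkeeping identifying $b^{P_n}$ as the claimed single-coordinate generator; once that is in place, the rest is routine exact-sequence chasing driven by the transitivity supplied by induction.
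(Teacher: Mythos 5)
Your proposal is correct and follows essentially the same route as the paper: raise $b$ to a power divisible by all the intermediate $l_k$ but coprime to $l_n$ so that the only surviving level-$n$ decoration is a rooted generator of $A_n$ at a single coordinate, then invoke the inductively established transitivity to conjugate this element around and fill out the whole base group $A_n^{m_n}$. The only cosmetic differences are the explicit exact-sequence scaffolding and your choice $P_n = l_1\cdots l_{n-1}$ in place of the paper's $m_{n-1}=l_0\cdots l_{n-2}$ (shifted by one in the induction index), both of which work for the same coprimality reasons.
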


\begin{proof}
We argue by induction. The action on level $1$ is given by $A_0$. Now assume that the action of $G$ on
$\Omega(n-1)$ is given by $A_{n-2} \wr \dots \wr A_0$. The automorphism $b^{m_{n-1}}$ acts as $a_{n-1}^{m_{n-1}}$ on $v
\in \Omega(n-1)$ and trivially above level $n-1$. There exists an integer $q$ such that $a_{n-1}^{q \cdot m_{n-1}} =
a_{n-1}$ because $l_{n-1}$ and $m_{n-1}$ are coprime. Hence for all $a_{n-1}^k \in A_{n-1}$ there exists a $g=b^{q
m_{n-1}} \in G$ such that $g|_{T_{v}}=a_{n-1}^k$. This holds for any vertex of level $n-1$ by the transitivity of
$A_{n-2} \wr \dots \wr A_0$. Therefore $G$ induces the action of $A_{n-1} \wr \dots \wr A_0$ on $\Omega(n)$.
\end{proof}

\begin{cor}\label{cor_GmodStab}
$G/\St_G(n) = A_{n-1} \wr \dots \wr A_0$.
\end{cor}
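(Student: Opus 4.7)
The plan is to deduce this directly from the preceding proposition via the first isomorphism theorem. The key observation is that the action of $G$ on $\Omega(n)$ is a homomorphism $\varphi_n : G \to \Sym(\Omega(n))$, and its kernel is by definition exactly $\St_G(n) = \bigcap_{u \in \Omega(n)} \St_G(u)$, since a tree automorphism fixes every vertex of level $n$ precisely when it acts trivially on $\Omega(n)$.

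First I would state this kernel identification explicitly, noting that $\St_G(n)$ coincides with the kernel of the permutation action of $G$ on level $n$. Then I would invoke the previous proposition, which tells us that the image $\varphi_n(G)$ is the full iterated wreath product $A_{n-1} \wr \dots \wr A_0$ acting on $\Omega(n)$. Applying the first isomorphism theorem then gives
\[
G/\St_G(n) \;\cong\; \varphi_n(G) \;=\; A_{n-1} \wr \dots \wr A_0,
\]
which is exactly the claim.

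There is essentially no obstacle here; the corollary is a two-line consequence of the proposition, and the only thing to be careful about is the precise identification of $\St_G(n)$ as the kernel of the level-$n$ action, which follows immediately from the definitions given in Section~2.
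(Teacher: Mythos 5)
Your proof is correct and is exactly the argument the paper intends: the corollary is stated without proof as an immediate consequence of the preceding proposition, and your identification of $\St_G(n)$ as the kernel of the level-$n$ permutation action followed by the first isomorphism theorem is the standard way to spell that out.
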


We denote conjugation by $x^y = y^{-1} x y$ and commutators by $[x,y]=x^{-1}y^{-1}xy$. Define the following
automorphisms and groups: \[b(i) = b^{a^{i-1}} \h \textnormal{
for } i = 1, \dots, l_0\] and
similarly \[b_n(i) = b_n^{a_n^{i-1}} \h \textnormal{ for } i = 1, \dots, l_n.\]

Also define
\[B_n=\left<b_n(1), \dots, b_n\left(l_{n-1}\right)\right>\] for $n \geq 0$ and similarly to $G$ the groups
\[G_n=\left<a_n, b_n\right>\] for $n \geq 1$. Write $G_0 = G$, $B=B_0$ and $A=A_0$.

\begin{prop}\label{prop_spinal} With the above definitions we get the following statements:
\begin{enumerate}[(a)]
 \item $G=B \rtimes A$ and so $G'=B'\cdot \left<[B,A]\right>$.
 \item $\St_G(1) \leq G_1 \times \dots \times G_1$.
 \item $B=\St_G(1)$.
\end{enumerate}
\end{prop}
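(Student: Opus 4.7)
The plan is to prove (a) first via a semidirect-product argument, then deduce (c) by an index count, and finally obtain (b) from (c) combined with inspection of the generators.

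For (a), first observe from the recursion $b = \left(b_1, a_1, 1, \ldots, 1\right)_1$ that $b$ fixes the first level, and hence so does each conjugate $b(i) = b^{a_0^{i-1}}$; thus $B \leq \St_G(1)$. Since $B$ is the subgroup generated by the $A$-orbit of $b$, it is normalized by $A$, and combined with $\langle B, A\rangle = G$ this shows $B \unlhd G$. The rooted automorphism $a_0$ permutes the $l_0$ vertices of $\Omega(1)$ as a cycle of prime length $l_0$, so $A$ embeds into $\Sym(l_0)$ via the level-$1$ action; consequently $A \cap \St_G(1) = 1$, and in particular $A \cap B = 1$. Together with $G = \langle a_0, b\rangle \subseteq AB$, this yields $G = B \rtimes A$. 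The identity $G' = B' \cdot \langle[B,A]\rangle$ then follows from the standard formula for the commutator subgroup of a semidirect product, using $A' = 1$ because $A$ is cyclic.

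For (c), Corollary~\ref{cor_GmodStab} gives $[G : \St_G(1)] = |A_0| = l_0$, while the semidirect decomposition from (a) gives $[G : B] = |A| = l_0$. Since $B \leq \St_G(1)$ and both subgroups have the same finite index $l_0$, they must coincide, so $B = \St_G(1)$. For (b), inspection of the generators shows that each $b(i)$, written in level-$1$ coordinates, has every entry in $\{1, a_1, b_1\} \subseteq G_1$; hence $B \leq G_1 \times \cdots \times G_1$, and combined with (c) this yields $\St_G(1) \leq G_1 \times \cdots \times G_1$.

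The step I expect to be the main technical nuisance is the semidirect-product commutator formula in (a): the inclusion $B' \cdot \langle[B,A]\rangle \subseteq G'$ is immediate, but the reverse requires expanding a general commutator $[b_1 a_1, b_2 a_2]$ with $b_i \in B$, $a_i \in A$ using the identities $[xy,z] = [x,z]^y[y,z]$ and $[x,yz] = [x,z][x,y]^z$, and then verifying that the resulting $A$-conjugates of elements of $B'$ and $[B,A]$ stay inside $B' \cdot \langle[B,A]\rangle$. This reduces to checking that $B' \cdot \langle[B,A]\rangle$ is normal in $G$, which uses $B \unlhd G$ and the fact that $A$ normalizes both $B'$ (since it normalizes $B$) and the set of commutators $[B,A]$.
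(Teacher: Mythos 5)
Your proof is correct, but it takes a noticeably different route from the paper's, and the differences are worth recording. For part (c), the paper derives $B=\St_G(1)$ from the modular (Dedekind) law applied to $G=B\cdot A$ together with $B\leq\St_G(1)$ and $A\cap\St_G(1)=1$; you instead count indices, using $G/\St_G(1)\cong A_0$ from Corollary~\ref{cor_GmodStab} to get $[G:\St_G(1)]=l_0$ and the semidirect decomposition to get $[G:B]=l_0$, then conclude equality from containment plus equal finite index. Both are one-line arguments and equally valid; the modular-law version is marginally more intrinsic (it does not rely on the index being finite), but yours is more elementary and perfectly adequate here.

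For part (b), the paper says that $\St_G(1)$ is generated by the $a$-conjugates of $b_0=(b_1,a_1,1,\dots,1)$, which implicitly presupposes the conclusion of part (c) (or invokes a Reidemeister--Schreier argument left unspoken); you instead prove the containment $B\leq G_1\times\cdots\times G_1$ directly by inspecting the level-one coordinates of the generators $b(i)$, and then transfer it to $\St_G(1)$ using your already-proved part (c). Your reordering (a) $\to$ (c) $\to$ (b) resolves the slight circularity in the paper's presentation and is arguably cleaner. Finally, you flesh out the commutator identity $G'=B'\cdot\langle[B,A]\rangle$, which the paper asserts without comment; the verification via $[xy,z]=[x,z]^y[y,z]$ together with normality of $B'\langle[B,A]\rangle$ in $G$ is exactly the right thing to check, so there is no gap there.
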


\begin{proof}
\begin{enumerate}[(a)]
\item Clearly $B \cap A = 1$ and $B \lhd G$.
\item $\St_G(1)$ is generated by $a$-conjugates of $b_0=\left(b_1, a_1, 1, \dots, 1\right)$. But $b_1$
and $a_1$ are in $G_1$, hence $\St_G(1)\leq G_1 \times \dots \times G_1$.
\item We see that $B \leq \St_G(1)$. For the other inclusion we use $G=B \cdot A$ and 
the modular law with $B \leq \St_G(1)$. We get $\St_G(1) = B (A \cap \St_G(1)) = B$ because $A \cap \St_G(1) = 1$.
\end{enumerate}
\end{proof}

Write $\Gamma'$ for the derived subgroup $\left[\Gamma,\Gamma\right]$ of a group $\Gamma$ and by $\Gamma^{(n)}$ for $n
\geq 1$ the $n$-th derived subgroup $\Gamma^{(n)} = \left[\Gamma^{(n-1)}, \Gamma^{(n-1)}\right]$ where
$\Gamma^{(0)}=\Gamma$.

\begin{lemma}\label{lemma_comm_inrst}
$B'B^{l_1} \leq \rst_G(1)$.
\end{lemma}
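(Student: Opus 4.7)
The plan is to pass to the image of $B$ inside $G_1 \times \cdots \times G_1$ given by Proposition~\ref{prop_spinal}(b) and then compute $l_1$-th powers and pairwise commutators of the generators $b(i)$ directly, coordinate by coordinate.

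First I would describe the generators in these coordinates. The automorphism $b$ fixes level $1$ with tuple $(b_1, a_1, 1, \ldots, 1)$, and since the rooted generator $a$ cyclically permutes the level-$1$ vertices, conjugation by $a$ cyclically shifts this tuple. Hence each $b(i) = b^{a^{i-1}}$ is supported on exactly two adjacent level-$1$ subtrees, carrying $b_1$ in one coordinate and $a_1$ in the next, and distinct values of $i$ produce distinct supports since $l_0 \geq 7$.

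Next I would compute $b(i)^{l_1}$: because $|A_1| = l_1$, the $a_1$-entry is killed under the $l_1$-th power and only $b_1^{l_1}$ survives in one coordinate, so $b(i)^{l_1}$ fixes every vertex outside a single level-$1$ subtree and hence lies in $\rst_G(1)$. Similarly, since $b(i), b(j) \in \St_G(1)$ their commutator is computed coordinate-wise, and its support is contained in the intersection of the two adjacent pairs. That intersection is either empty (in which case the commutator is trivial) or a single coordinate, in which case the surviving entry is $[b_1, a_1]^{\pm 1}$ and all other coordinates are trivial; in either case $[b(i), b(j)] \in \rst_G(1)$.

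Finally I would use that $\rst_G(1)$ is normal in $G$ because $G$ merely permutes the factors $\rst_G(u_k)$. The previous steps show that the images of $b(1), \ldots, b(l_0)$ in $B/\rst_G(1)$ pairwise commute and each has order dividing $l_1$, so $B/\rst_G(1)$ is abelian of exponent dividing $l_1$, which is precisely the statement $B' B^{l_1} \leq \rst_G(1)$. None of the steps is genuinely hard; the only point demanding care is the combinatorial bookkeeping of how the adjacent-pair supports intersect under the cyclic shift by $a$, together with the observation that a commutator or power whose tuple has a single non-trivial coordinate really does lie in the rigid vertex stabilizer of that coordinate rather than only in the level stabilizer.
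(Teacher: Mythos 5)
Your proof is correct and takes essentially the same route as the paper: you compute $b(i)^{l_1}$ and $[b(i),b(j)]$ coordinate-by-coordinate in $G_1\times\cdots\times G_1$, observe each result has support in a single level-$1$ subtree, and conclude. The only stylistic difference is that you package the conclusion by passing to the quotient $B/\rst_G(1)$ and noting it is abelian of exponent dividing $l_1$, whereas the paper records the explicit tuples; both arguments rest on exactly the same support computation.
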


\begin{proof}
We first prove $B' \leq \rst_G(1)$ and claim that

\begin{equation}\label{eq_comm} 
[b(i), b(j)]=
\begin{cases} 
\left(1,\dots,1,\left[a_1,b_1\right],1,\dots,1 \right)_1 & \textnormal{ if } \h j=i+1 \mod l_0 \h \\
\left(1,\dots,1,\left[b_1,a_1\right],1,\dots,1 \right)_1 & \textnormal{ if } \h i=j+1 \mod l_0 \h \\
1 & \textnormal{ otherwise.} 
\end{cases} 
\end{equation}

We look at the action of $\left[b(i),b(j)\right] = b(i)^{-1}b(j)^{-1}b(i)b(j)$ on the first layer for the first and
third case. The second one follows similarly. Denote by underbracing the positions of the respective elements.

\begin{itemize}

\item $j = i+1$:

\[b(i)^{-1}b(j)^{-1}b(i)b(j) = (1, \dots, 1, \underbrace{b_1^{-1}b_1}_{i}, \underbrace{a_1^{-1}
b_1^{-1}a_1 b_1}_{j=i+1}, \underbrace{a_1^{-1} a_1}_{j+1}, 1 \dots, 1)_1\]
\[= \left(1, \dots, 1, \left[a_1, b_1\right], 1, \dots, 1 \right)_1. \]

\item $|i-j| > 1$: \[b(i)^{-1}b(j)^{-1}b(i)b(j) = (1, \dots, 1, \underbrace{b_1^{-1}b_1}_{i}, \underbrace{a_1^{-1}
a_1}_{i+1}, 1, \dots, 1, \underbrace{b_1^{-1}b_1}_{j}, \underbrace{a_1^{-1} a_1}_{j+1}, 1 \dots, 1)_1=1.\]
 
\end{itemize}

It remains to show $B^{l_1} \leq \rst_G(1)$.
\[ b(k)^{l_1}=(1, \dots, 1, \underbrace{b_1^{l_1}}_{k}, a_1^{l_1}, 1, \dots, 1)_1
=(1,\dots,1,\underbrace{b_1^{l_1}}_{k},1, \dots,1)_1 \in \rst_G(1) \h\textnormal{ for } i=1,\dots,l_0.\]
\end{proof}

\subsection{Introducing $N$}

In this subsection we define a normal subgroup $N$ that will be proved to be equal to the derived group of $G$.
However, this explicit construction and the explicit finite set of generators that we will obtain will be very
useful.

\medskip

Let $F_{l_0} = \left<x_1, \dots, x_{l_0}\right>$ be the free group on $l_0$ generators. The map 
\begin{equation}\label{eq_f}
f: 
\begin{cases} 
F_{l_0} \longrightarrow \mathbb{Z}\\
x_i \mapsto 1
\end{cases}
\end{equation} 

is surjective. Its kernel $K(x_1, \dots, x_{l_0})=\ker(f)$ consists of all words in the generators where the sum over
all
exponents is $0$.

\begin{lemma}\label{lemma_N}
$K\left(x_1, \dots, x_{l_0}\right)=\left<x_i^{-1}x_j | i,j= 1,\dots, l_0 \right>^F$.
\end{lemma}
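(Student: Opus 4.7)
The plan is to prove the two inclusions separately. For the easy direction, every generator $x_i^{-1}x_j$ of the right-hand side has exponent sum zero, hence lies in $K(x_1,\dots,x_{l_0})$; since $K$ is the kernel of a homomorphism it is normal in $F_{l_0}$, so the entire normal closure is contained in $K$.

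For the reverse inclusion, write $N = \left<x_i^{-1}x_j \mid i,j=1,\dots,l_0\right>^{F_{l_0}}$ and consider the quotient $F_{l_0}/N$. Because $x_i^{-1}x_j \in N$ for all $i,j$, the images of all generators in $F_{l_0}/N$ coincide, so $F_{l_0}/N$ is generated by the single element $\bar{x}_1$ and is therefore cyclic. Since $N \subseteq K$, the map $f$ factors through $F_{l_0}/N$ giving a surjection $\bar{f}: F_{l_0}/N \twoheadrightarrow \mathbb{Z}$ with $\bar{f}(\bar{x}_1) = 1$.

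Now $F_{l_0}/N$ is a cyclic group admitting a surjection onto $\mathbb{Z}$, so it cannot be finite cyclic; hence $F_{l_0}/N \cong \mathbb{Z}$, and since $\bar{f}$ sends a generator to $1$, it is an isomorphism. In particular $\ker \bar{f} = K/N$ is trivial, giving $K = N$ as required. I do not anticipate any real obstacle; the only slightly delicate observation is the elementary fact that a cyclic group surjecting onto $\mathbb{Z}$ must itself be infinite cyclic, so that $\bar{f}$ becomes an isomorphism rather than merely a surjection.
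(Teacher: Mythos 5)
Your proof is correct, and it takes a mildly different route from the paper's. The paper first proves $F' \leq X := \langle x_i^{-1}x_j \rangle^F$ by exhibiting an explicit commutator identity, $x_i^{-1}x_j^{-1}x_ix_j = (x_j^{-1}x_i)^{x_i}\cdot x_i^{-1}x_j$, then notes $X \leq K$ and compares the two subgroups $K/F'$ and $X/F'$ inside the free abelian group $F/F' \cong \mathbb{Z}^{l_0}$, where both are the kernel of the sum-of-coordinates map. You instead observe directly that $F/N$ is cyclic because all the $\bar{x}_i$ coincide modulo $N$, and conclude via the surjection $\bar f : F/N \twoheadrightarrow \mathbb{Z}$ together with the Hopfian property of $\mathbb{Z}$ (a surjective endomorphism of $\mathbb{Z}$ is injective). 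The two arguments are the same in spirit — both amount to showing $F' \leq N \leq K$ and then finishing with an abelian computation — but your version avoids the explicit commutator identity and the need to describe the subgroup structure of $\mathbb{Z}^{l_0}$, which makes it marginally slicker. Either way the content is the same and the proof is sound.
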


\begin{proof}
Define $X=\left<x_i^{-1}x_j | i,j= 1,\dots, l_0 \right>^F$. We first show $F' \leq X$. We can write \[x_i^{-1} x_j^{-1}
x_i x_j = \left(x_j^{-1}x_i\right)^{x_i} \cdot x_i^{-1}
x_j\] which proves the claim. Clearly $X \leq K$. We observe that $K/F'=X/F'$ which yields that $K=X$.

\end{proof}

Define 
\[N_n=K\left(b_n(1), \dots, b_n\left(l_n\right)\right)\] for $n \geq 0$ and write $N = N_0$ for the rest of this
paper. The following lemma follows straight from the definition.

\begin{lemma}
$N_n \leq B_n$ for $n \geq 0$.
\end{lemma}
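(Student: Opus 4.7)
The plan is a direct unpacking of the definitions. By construction, $N_n = K(b_n(1),\dots,b_n(l_n))$ is the image in $G$ of the kernel of the augmentation map $f\colon F\to \mathbb{Z}$, $x_i\mapsto 1$, pulled back along the homomorphism $\varphi\colon F\to \langle b_n(1),\dots,b_n(l_n)\rangle$ sending $x_i\mapsto b_n(i)$, where $F$ denotes the free group on $l_n$ generators (in complete analogy with the case $n=0$, $F=F_{l_0}$, treated in \eqref{eq_f}). Any subgroup of $F$ maps under $\varphi$ into the image $\varphi(F)=\langle b_n(1),\dots,b_n(l_n)\rangle$, which is $B_n$ by definition. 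Applying this to the augmentation kernel gives $N_n=\varphi(\ker f)\leq B_n$ without any further work.

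An equivalent phrasing uses Lemma \ref{lemma_N}, which identifies $\ker f$ with the normal closure in $F$ of the elements $x_i^{-1}x_j$ for $i,j=1,\dots,l_n$. Pushing forward under $\varphi$ shows that $N_n$ is the normal closure in $B_n$ of the elements $b_n(i)^{-1}b_n(j)$. Each of these lies in $B_n$ as a product of two generators of $B_n$, and a normal closure inside $B_n$ of a subset of $B_n$ is again contained in $B_n$, so again $N_n\leq B_n$.

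Either route makes the statement essentially formal; the paper itself announces that it follows straight from the definition, and I do not foresee any real obstacle beyond writing the definitions of $K(\cdot)$ and $B_n$ out carefully, in particular checking that the free-group setup of \eqref{eq_f} and Lemma \ref{lemma_N} goes through verbatim when $l_0$ is replaced by $l_n$.
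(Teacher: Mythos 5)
Your proof is correct and matches the paper's treatment: the paper simply announces that the lemma ``follows straight from the definition,'' and your proposal spells out exactly that --- $N_n$ is by construction $\varphi(\ker f)$ for $\varphi\colon F_{l_n}\to G$, $x_i\mapsto b_n(i)$, hence lies in $\varphi(F_{l_n})=B_n$. One small slip in wording: you describe $\ker f$ as ``pulled back along $\varphi$,'' but what you actually use (and what is correct) is the image $\varphi(\ker f)$, a pushforward rather than a pullback.
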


\begin{lemma}\label{lemma_Nfg} The subgroup $N$ is finitely generated by $\left\{b(2)^{-1}b(1), b(3)^{-1}b(2), \dots,
b(1)^{-1} b\left(l_0\right)\right\}$.
\end{lemma}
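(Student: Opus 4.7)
My plan is to show $N = H$, where $H := \langle \gamma_1, \dots, \gamma_{l_0}\rangle$ with $\gamma_i := b(i+1)^{-1} b(i)$ (indices taken cyclically in $\{1,\dots,l_0\}$), by combining Lemma \ref{lemma_N} with a telescoping identity and then proving that $H$ is in fact normal in $B$, which will make the normal closure appearing in Lemma \ref{lemma_N} unnecessary.

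By Lemma \ref{lemma_N} applied to the substitution $x_i \mapsto b(i)$, the subgroup $N$ equals the normal closure in $B$ of the set $\{b(i)^{-1}b(j) : 1 \le i,j \le l_0\}$. The telescoping identity $b(i)^{-1}b(j) = \gamma_{i-1}^{-1} \gamma_{i-2}^{-1} \cdots \gamma_j^{-1}$ (indices cyclic) shows each $b(i)^{-1} b(j)$ already lies in $H$, so $H \subseteq N$. Once I verify that $H$ is normal in $B$, the reverse inclusion is immediate: $H$ is then a normal subgroup of $B$ containing all of the normal generators of $N$, hence $N \subseteq H$, giving $N = H$.

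For normality, it suffices to check $\gamma_i^{b(k)^{\pm 1}} \in H$ for each pair $(i,k)$. Equation \ref{eq_comm} from Lemma \ref{lemma_comm_inrst} provides the decisive simplification: $[b(p), b(q)] = 1$ in $B$ whenever $p$ and $q$ are not cyclic neighbours. Consequently $b(k)$ commutes with both $b(i)$ and $b(i+1)$ whenever $k \notin \{i-1, i, i+1, i+2\} \pmod{l_0}$, and for such $k$ the conjugate $\gamma_i^{b(k)}$ is simply $\gamma_i \in H$. So only the four residual cases $k \in \{i-1, i, i+1, i+2\}$ need explicit verification.

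The main obstacle will be these four case-by-case computations. For each of them I expand $b(k)^{-1} \gamma_i b(k) = b(k)^{-1} b(i+1)^{-1} b(k) \cdot b(k)^{-1} b(i) b(k)$ and use the concrete form of $[b(p), b(p+1)]$ from Lemma \ref{lemma_comm_inrst}, together with the $\rst_G(1)$-coordinate description of the $\gamma_j$'s, to rewrite the result as a word in the $\gamma_j$'s. The fact that every nontrivial commutator in $B$ is supported on a single level-$1$ vertex keeps these computations local and finite, but one has to be careful to land in $H$ itself rather than merely in the larger normal subgroup $N$. Once normality is secured, the finite generation statement follows at once.
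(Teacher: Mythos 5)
Your high-level plan is the same as the paper's: let $H$ be the subgroup generated by the consecutive differences $\gamma_i=b(i+1)^{-1}b(i)$, get $H\subseteq N$ for free, and obtain $N\subseteq H$ by showing $H$ is closed under conjugation by $B$, which makes the normal closure in Lemma~\ref{lemma_N} collapse onto $H$. The case reduction to $k\in\{i-1,i,i+1,i+2\}$ via the commutation relations of Lemma~\ref{lemma_comm_inrst} is also correct.

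However, there is a genuine gap: the four residual cases are the entire content of the lemma, and you have not actually done them; you have only sketched a plan that, as outlined, does not work. Passing to $\rst_G(1)$-coordinates tells you what $\gamma_i^{b(k)}$ looks like as an element of $G_1\times\dots\times G_1$, but membership in $H$ is not visible from those coordinates; you acknowledge as much when you warn about landing in $N$ rather than $H$. Worse, a direct algebraic expansion tends to be circular. For instance $\gamma_i^{b(i)}=[b(i),b(i+1)]\,\gamma_i$, and $[b(i),b(i+1)]=\gamma_i^{-1}\gamma_i^{b(i+1)}$, so handling $k=i$ reduces to handling $k=i+1$ and vice versa.

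The paper breaks this circle with a purely algebraic insertion trick that stays away from coordinates. It first shows that \emph{both} families $b(p)^{-1}b(q)$ and $b(p)b(q)^{-1}$ lie in the candidate subgroup. The first family telescopes, but the second does not; it needs an insertion of a far-away generator, e.g. $b(i)b(i-1)^{-1}=b(i+2)^{-1}b(i)\cdot b(i-1)^{-1}b(i+2)$, valid because $b(i+2)$ commutes with $b(i)$ and $b(i-1)$. Then for the residual conjugations one inserts $b(k+2)b(k+2)^{-1}$ into $b(k)^{-1}b(i)^{-1}b(i-1)b(k)$ and regroups into a product of four factors, each of the form $b(p)^{\pm1}b(q)^{\mp1}$ and hence already known to lie in the subgroup; the commutation hypotheses and $l_0\geq 6$ ensure the regrouping is legitimate. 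Without something of this kind your proof does not close.
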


The essential property used in this proof is that each generator of $B$ commutes with most of the others.
More precisely we have the identities $\left[b(i), b(k)\right]=1$ if $|i-k| \neq 1 \mod l_0$.

\begin{proof} We need here that $l_0 \geq 6$ and set $D=\left<b(2)^{-1}b(1),b(3)^{-1}b(2),
\dots, b(1)^{-1}b\left(l_0\right)\right>$. We show that \[\left(b(2)^{-1}b(1)\right)^{b(k)}
\in D.\]

We first show that all elements of the form $b(j)^{-1}b(i)$ and $b(j)b(i)^{-1}$ for any $i,j = 1, \dots, l_0$ are in
$D$.
The first one is easy to see by taking products of consecutive elements. For $b(j)b(i)^{-1}$ we build
$b(i)b(i-1)^{-1}$ first:
\[b(i)b(i-1)^{-1} = b(i+2)^{-1}b(i+2)\cdot  b(i) b(i-1)^{-1} = b(i+2)^{-1} b(i) \cdot b(i-1)^{-1} b(i+2).\]
This is a product of two elements which are already in $D$ because we have $[b(i-1),b(i+2)]=[b(i),b(i+2)]=1$. We only
need to prove
closure under conjugation by $B$. It remains to look at $k=i-1, k=i$ and $k=i+1$. If without loss of generality $k=i$ or
$k=i+1$, we have

\[\left(b(i)^{-1}b(i-1)\right)^{b(k)} = b(k)^{-1}b(i)^{-1}b(i-1)b(k)\] 
\[=b(k)^{-1}b(k+2)\cdot b(k+2)b(i)^{-1} \cdot b(i-1)b(k+2)^{-1} \cdot b(k+2)^{-1}b(k),\]
because $b(k+2)$ commutes with all other factors in this expression if $k+2 \neq i-1$. The latter is a product of
four elements in
$D$. The cases $k=i-1$ and $k=i-2$ can be dealt with in the same way. Therefore $D^b \leq D$ for all $b \in B$ and so
$D^B=D$. $N=D$ and so $N$  is finitely generated because $D$ obviously is.
\end{proof}

\begin{prop}\label{prop_gnDash}
$G_n'=N_n$ for $n \geq 0$.
\end{prop}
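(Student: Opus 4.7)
The plan is to handle the base case $n=0$, i.e.\ $G' = N$, and then observe that the same argument applies verbatim at each level via the self-similar data $(G_n, A_n, B_n, N_n)$ in place of $(G, A, B, N)$.

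The inclusion $N \leq G'$ is immediate from Lemma~\ref{lemma_Nfg}: each generator $b(i+1)^{-1}b(i)$ of $N$ equals $[b(i),a]^{-1}$, because $b(i)^a = b(i+1)$, so it lies in $G'$.

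For the reverse inclusion $G' \leq N$ the strategy is to show that $G/N$ is abelian. First I would check that $N \lhd G$: the proof of Lemma~\ref{lemma_Nfg} already establishes $N^B = N$, and $A = \langle a\rangle$ permutes the defining generators $b(j)^{-1}b(i)$ of $N$ via $b(i) \mapsto b(i+1)$, so $N^A = N$; since $G = BA$ by Proposition~\ref{prop_spinal}(a), this gives normality in $G$. Next, the images of $b(1), \ldots, b(l_0)$ in $B/N$ all coincide, so $B/N$ is cyclic, and $A$ permutes these generators and therefore acts trivially on $B/N$. Hence $G/N \cong (B/N) \times A$ is abelian, which forces $G' \leq N$ and completes the base case.

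To promote the argument to arbitrary $n$, the one ingredient worth recording is the level-$n$ analogue of Proposition~\ref{prop_spinal}(a), namely $G_n = B_n \rtimes A_n$. This follows from self-similarity: the defining relation $b_n = (b_{n+1}, a_{n+1}, 1, \ldots, 1)$ places $b_n$, and hence every conjugate $b_n(i)$, in the first-level stabiliser of $T_n$, so $B_n \cap A_n = 1$; meanwhile $a_n$ cycles the $b_n(i)$'s, so $B_n$ is normal in $G_n$. I expect this parallel to be the main, though modest, obstacle; once it is in place, the abelianisation argument above transfers unchanged. The standing hypothesis $l_i \geq 7$ guarantees $l_n \geq 6$, which is what Lemma~\ref{lemma_Nfg} requires in order to exhibit the finite generating set of $N_n$ that the argument uses.
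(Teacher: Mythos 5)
Your proof is correct and follows essentially the same approach as the paper: $N \leq G'$ because the generators $b(i+1)^{-1}b(i)$ are commutators, and $G' \leq N$ because $G/N$ is abelian. You are simply more explicit than the paper, which compresses the second direction to ``$N$ is the kernel of a map whose image is abelian''; your verification that $N \lhd G$ and that $G/N \cong (B/N) \times A$ with $A$ acting trivially on the cyclic group $B/N$ fills in what that terse phrase leaves implicit, and your note on $G_n = B_n \rtimes A_n$ makes the passage to general $n$ honest.
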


\begin{proof}
$N$ is the kernel of a map whose image is abelian hence $G' \leq N$. Looking at the generators of $N$ we see that
$N/G'=1$ and hence the groups are equal.
\end{proof}

\begin{lemma}\label{lemma_Bdash}
$B'= N_1 \times \dots \times N_1$ and so $B'\leq B_1 \times \dots \times B_1$.
\end{lemma}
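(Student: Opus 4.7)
The plan is to exploit the coordinate decomposition of $\rst_G(1)$ afforded by Lemma~\ref{lemma_comm_inrst} and read off both inclusions from the explicit commutator formula \eqref{eq_comm}. First I would record that $B'\leq\rst_G(1)\leq G_1\times\dots\times G_1$, so we have well-defined coordinate projections $\pi_k\colon \St_G(1)\to G_1$ for $k=1,\dots,l_0$. Since $B\leq\St_G(1)$, conjugation by $w\in B$ on an element of $\rst_G(1)$ acts coordinate-wise, $(y_1,\dots,y_{l_0})_1^{w}=(y_1^{\pi_1(w)},\dots,y_{l_0}^{\pi_{l_0}(w)})_1$. Reading off the definition of $b(m)$, the value $\pi_k(b(m))$ is $b_1$ when $m=k$, is $a_1$ when $m=k-1\bmod l_0$, and is trivial otherwise; in particular $\pi_k(B)=\langle a_1,b_1\rangle=G_1$.

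For the inclusion $B'\leq N_1\times\dots\times N_1$ I would use that $B'$ is the normal closure in $B$ of the commutators $[b(i),b(j)]$. By \eqref{eq_comm} each $\pi_k[b(i),b(j)]$ is either trivial or equals $[a_1,b_1]^{\pm 1}\in G_1'=N_1$. Conjugation by $w\in B$ replaces such a coordinate by its $\pi_k(w)$-conjugate, which lies in $N_1$ since $\pi_k(w)\in G_1$ and $N_1\lhd G_1$ by Proposition~\ref{prop_gnDash}. Hence $\pi_k(B')\leq N_1$ for every $k$.

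For the reverse inclusion it suffices to embed, for each $k$, the subgroup $N_1$ into $B'$ at coordinate $k$ with identity elsewhere. Taking $i=k-1\bmod l_0$, \eqref{eq_comm} yields $[b(i),b(i+1)]=(1,\dots,1,[a_1,b_1],1,\dots,1)_1\in B'$, non-trivial only at position $k$. Because the other coordinates are trivial, coordinate-wise $B$-conjugation leaves them trivial and only alters position $k$: for any $w\in B$ we get $(1,\dots,1,[a_1,b_1]^{\pi_k(w)},1,\dots,1)_1\in B'$. As $\pi_k(w)$ ranges over $G_1=\langle a_1,b_1\rangle$, we produce $[a_1,b_1]^g$ at coordinate $k$ alone for every $g\in G_1$; since $G_1$ is two-generated, these conjugates generate the normal closure $[a_1,b_1]^{G_1}=G_1'=N_1$. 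This realises every element of $N_1$ at coordinate $k$ alone, whence $N_1\times\dots\times N_1\leq B'$. The final assertion $B'\leq B_1\times\dots\times B_1$ is then immediate from $N_1\leq B_1$. The only real subtlety is confirming that the ``single-coordinate support'' of $[b(i),b(i+1)]$ survives $B$-conjugation, which is precisely where the fact that $B$ fixes level~$1$ is used.
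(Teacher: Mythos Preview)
Your proof is correct and follows essentially the same route as the paper: both inclusions rest on the coordinate decomposition of $\St_G(1)$ and the explicit commutator formula~\eqref{eq_comm}, and the reverse inclusion in both cases is obtained by conjugating the single-coordinate commutator $[b(i),b(i+1)]$ inside $B$ to sweep out $N_1$ at each coordinate. The paper's treatment of the forward inclusion is more economical---it simply notes that $B\leq G_1\times\dots\times G_1$ forces $B'\leq G_1'\times\dots\times G_1'=N_1\times\dots\times N_1$ without invoking the normal-closure description of $B'$---and for the reverse inclusion it names the specific conjugators $b(1)^{j-1}$ that yield the generators $b_1(j+1)^{-1}b_1(j)$ of $N_1$ from Lemma~\ref{lemma_Nfg}, whereas you argue more abstractly via $\pi_k(B)=G_1$ and $[a_1,b_1]^{G_1}=G_1'$.
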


\begin{proof}
We have $B=\St_G(1)\leq G_1 \times \dots \times G_1$ and hence $B'\leq G_1'\times \dots \times G_1'=N_1 \times \dots
\times N_1$ by Corollary \ref{prop_gnDash}. We now prove $N_1' \times \dots \times N_1' \leq B'$. The group $N_1$ is
generated by elements of the form $b_1(j+1)^{-1}b_1(j) = \left[b(1), b(2)\right]^{b(1)^{j-1}}$ and hence in $B'$.
\end{proof}

\begin{cor}\label{cor_BnDerived}
$B_{n-1}'=N_n \times \dots \times N_n \leq B_n \times \dots \times B_n$ for $n \geq 1$.
\end{cor}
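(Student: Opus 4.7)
The statement is essentially Lemma \ref{lemma_Bdash} shifted down $n-1$ levels, so the plan is to reduce to that lemma by applying its proof verbatim to the group $G_{n-1}$ in place of $G=G_0$. The construction of $G$ is self-similar in the following sense: $G_{n-1} = \langle a_{n-1}, b_{n-1}\rangle$ has exactly the same recursive description as $G$ but based on the tail sequence $(l_{n-1}, l_n, l_{n+1}, \dots)$ of primes, with the stabilizer $B_{n-1} = \St_{G_{n-1}}(1)$ playing the role of $B$ and $B_n, N_n$ playing the roles of $B_1, N_1$.

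First I would establish the forward inclusion. Apply Proposition \ref{prop_spinal}(b) to $G_{n-1}$ to obtain
\[
B_{n-1} = \St_{G_{n-1}}(1) \leq \underbrace{G_n \times \dots \times G_n}_{l_{n-1}\text{ copies}}.
\]
Taking derived subgroups and using the fact that derivation distributes over direct products, together with Proposition \ref{prop_gnDash} in the form $G_n' = N_n$, yields
\[
B_{n-1}' \leq G_n' \times \dots \times G_n' = N_n \times \dots \times N_n.
\]

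For the reverse inclusion I would mimic the argument in Lemma \ref{lemma_Bdash}. By Lemma \ref{lemma_Nfg} applied at level $n$, the group $N_n$ is generated by the consecutive quotients $b_n(j+1)^{-1}b_n(j)$. Using the shifted analogue of equation (\ref{eq_comm}), each such generator — placed in any single coordinate — is realised by a commutator of the form $[b_{n-1}(i), b_{n-1}(i+1)]$, possibly conjugated by a power of $b_{n-1}(i)$ to step $j$ through the values $1,\dots,l_n$, and by a rooted-type conjugation within $G_{n-1}$ to move the nontrivial entry to the desired coordinate. All such elements lie in $B_{n-1}'$, which gives $N_n \times \dots \times N_n \leq B_{n-1}'$, completing the equality.

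Finally, the second inclusion $N_n \times \dots \times N_n \leq B_n \times \dots \times B_n$ is immediate from the already-established fact $N_n \leq B_n$, applied coordinatewise. I do not expect any serious obstacle here: the whole argument is a routine translation of the level-zero case, and the only point requiring mild care is checking that the commutator identities of Lemma \ref{lemma_comm_inrst} and the finite generation argument of Lemma \ref{lemma_Nfg} depend only on the structural features $l_{n-1} \geq 7$ and the recursive form of $b_{n-1}$, which are preserved under the shift.
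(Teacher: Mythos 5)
Your argument is correct and is precisely what the paper intends: the corollary is stated without proof because it is the self-similar shift of Lemma~\ref{lemma_Bdash} to the group $G_{n-1}$, exactly the translation you spell out. The forward inclusion via Proposition~\ref{prop_spinal}(b) and Proposition~\ref{prop_gnDash} at level $n$, and the reverse inclusion via the commutator identity $b_n(j+1)^{-1}b_n(j)=[b_{n-1}(1),b_{n-1}(2)]^{b_{n-1}(1)^{j-1}}$ together with conjugation to move coordinates, match the level-zero proof verbatim.
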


\begin{lemma}\label{lemma_generatingN}\label{lemma_N_derived}\label{lemma_prod}\label{lemma_Gder}
We have the following identities for the subgroups defined above for $n \geq 0$:
\begin{enumerate}[(a)]
 \item  \label{part_1} $N'=B'$.
 \item \label{part_2} $N_n'= N_{n+1} \times \dots \times N_{n+1}$ with $l_n$ factors in the direct product.
 \item \label{part_3} $G^{(n+1)} = G_n'\times \dots \times G_n'$ with $m_n$ factors in the
direct product.
 \item  \label{part_4} $G^{(n+1)} \subseteq \rst_G(n)$. 
\end{enumerate}
\end{lemma}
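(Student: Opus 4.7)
My plan is to prove (b) first as the central combinatorial identity; then (a) is its $n=0$ case combined with Lemma \ref{lemma_Bdash}, (c) follows by induction, and (d) is immediate from (c).

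For (b), the inclusion $N_n' \leq N_{n+1} \times \dots \times N_{n+1}$ follows at once from $N_n \leq B_n$ and Corollary \ref{cor_BnDerived}, which gives $B_n' = N_{n+1} \times \dots \times N_{n+1}$ with $l_n$ factors. The reverse $B_n' \leq N_n'$ is the substantive point. Since $N_n \triangleleft B_n$, the subgroup $N_n'$ is also normal in $B_n$, so $B_n' \leq N_n'$ is equivalent to $B_n/N_n'$ being abelian, and hence to $[b_n(i), b_n(j)] \in N_n'$ for all pairs of generators. By the level-$n$ analogue of \eqref{eq_comm} these vanish unless $|i-j| \equiv 1 \pmod{l_n}$, so only the $[b_n(i), b_n(i+1)]$ require attention. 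Using $l_n \geq 7$, I pick $k, k' \in \{1, \dots, l_n\}$ with $k, k' \notin \{i-1, i, i+1, i+2\}$ and $|k-k'| \not\equiv 1 \pmod{l_n}$, so that $b_n(k)$ and $b_n(k')$ each commute with $b_n(i)$, with $b_n(i+1)$, and with one another. A direct expansion, pulling the commuting factors through and cancelling them, yields
\[
[b_n(i) b_n(k)^{-1},\, b_n(i+1) b_n(k')^{-1}] = [b_n(i), b_n(i+1)].
\]
Both factors on the left have trivial image under the sum map $f$, so they lie in $N_n$; the right-hand side therefore lies in $N_n'$, establishing (b).

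For (c) I induct on $n$; the case $n=0$ is trivial since $m_0 = 1$. For the inductive step, assume $G^{(n)} = G_{n-1}' \times \dots \times G_{n-1}'$ with $m_{n-1}$ factors acting on pairwise disjoint subtrees. Since the factors then commute,
\[
G^{(n+1)} = [G^{(n)}, G^{(n)}] = (G_{n-1}')' \times \dots \times (G_{n-1}')' = N_{n-1}' \times \dots \times N_{n-1}'
\]
with $m_{n-1}$ factors, using Proposition \ref{prop_gnDash}. Applying (b) to each factor splits it into $l_{n-1}$ copies of $N_n = G_n'$, giving $m_n = l_{n-1} m_{n-1}$ copies in all, as required. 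Finally (d) is an immediate consequence of (c): in the direct decomposition, each factor $G_n'$ acts only on a single subtree $T_v$ with $v \in \Omega(n)$ and is the identity elsewhere, so it lies in $\rst_G(v)$, and the product lies in $\rst_G(n)$.

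The only substantial step is the commutator identity inside (b); the other parts are formal once it is established. The standing hypothesis $l_n \geq 7$ enters exactly there, as it is what allows us to find the two indices $k, k'$ outside the four-element block $\{i-1, i, i+1, i+2\}$ with the added constraint that they themselves are not cyclically adjacent.
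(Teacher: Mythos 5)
Your proposal is correct and follows essentially the same route as the paper: the substance in both cases is the observation that $[b_n(i),b_n(i+1)]$ can be rewritten as a commutator of two zero-exponent-sum words (your $[b_n(i)b_n(k)^{-1},\,b_n(i+1)b_n(k')^{-1}]$ versus the paper's $[b(4)^{-1}b(2),\,b(2)^{-1}b(1)]$, which uses one fresh index and reuses $b(2)$ rather than two fresh indices $k,k'$), giving $B_n' \leq N_n'$; the remaining parts then follow by the same formal bookkeeping with Corollary~\ref{cor_BnDerived}, iteration/induction for~(c), and disjoint supports for~(d).
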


\begin{proof}
\begin{enumerate}[(a)]
\item  Elementary commutator manipulation shows that \[\left[b(2),b(1)\right] = \left[b(4)^{-1}b(2),
b(2)^{-1}b(1)\right].\]
This implies $B' \leq N'$. The other inclusion follows straight from $N \leq B$.
\item By Corollary \ref{cor_BnDerived} and we have $N_n'=B_n'=N_{n+1} \times \dots \times N_{n+1}$.
\item We start with \[G^{(n+1)}=\left(G'\right)^{(n)} = N^{(n)}=\left(N'\right)^{(n-1)}=(\underbrace{N_1 \times \dots
\times N_1}_{l_o \textnormal{ times}})^{(n-1)} = N_1^{(n-1)}\times \dots \times N_1^{(n-1)}\]

and apply (\ref{part_2}) iteratively together with Proposition \ref{prop_gnDash} and get

\[\underbrace{N_n \times \dots \times N_n}_{m_n \textnormal{ times}} = G'_n \times \dots \times G'_n.\]
\item The proof of (\ref{part_3}) implies $G^{(n+1)}=N_n \times \dots \times N_n \leq (G \cap G_n) \times \dots \times
(G \cap
G_n)=\rst_G(n)$.
\end{enumerate}
\end{proof}

\begin{cor}
$B_n''=B_{n+1}' \times \dots \times B_{n+1}'$ and $B^{(n)}=B_{n-1}'\times \dots \times B_{n-1}'$ for $n \geq 0$.
\end{cor}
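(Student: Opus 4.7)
The plan is to derive both identities as direct consequences of Corollary \ref{cor_BnDerived} and Lemma \ref{lemma_Gder}, together with the elementary fact that the derived subgroup of a finite direct product splits coordinatewise as the direct product of the derived subgroups of the factors.

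For the first identity, I would apply Corollary \ref{cor_BnDerived} with index shifted by one to get $B_n' = N_{n+1} \times \dots \times N_{n+1}$ with $l_n$ factors. Taking the derived subgroup and distributing over the direct product yields $B_n'' = N_{n+1}' \times \dots \times N_{n+1}'$. Applying Lemma \ref{lemma_Gder}(b) to each factor expands $N_{n+1}'$ as $N_{n+2} \times \dots \times N_{n+2}$ with $l_{n+1}$ factors. A second application of Corollary \ref{cor_BnDerived}, now at index $n+2$, identifies this product with $B_{n+1}'$, so each coordinate collapses to a single copy of $B_{n+1}'$, and reassembling gives $B_n'' = B_{n+1}' \times \dots \times B_{n+1}'$ with $l_n$ factors.

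For the second identity I would proceed by induction on $n$, writing $B = B_0$. The base case $n=1$ is the tautology $B^{(1)} = B_0'$, interpreted as a one-factor product (since $m_0 = 1$). For the inductive step, assume $B^{(n)} = B_{n-1}' \times \dots \times B_{n-1}'$ with $m_{n-1}$ factors. Passing to the derived subgroup distributes over the product, so $B^{(n+1)} = B_{n-1}'' \times \dots \times B_{n-1}''$ with $m_{n-1}$ factors, and applying the first identity to each factor replaces $B_{n-1}''$ by $l_{n-1}$ copies of $B_n'$. Since $m_{n-1} \cdot l_{n-1} = m_n$, the total rearranges to $B_n' \times \dots \times B_n'$ with $m_n$ factors, completing the induction.

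The only subtlety, which is more bookkeeping than obstacle, is keeping track of the number of factors and matching them to the levels of the tree via the indices $m_n = \prod_{i=0}^{n-1} l_i$; no new group-theoretic input beyond the already-proved structural results is required.
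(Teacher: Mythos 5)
Your argument is correct, and since the paper leaves this corollary without a proof, your derivation is essentially the one the structure of the preceding results invites: apply Corollary~\ref{cor_BnDerived} (shifted to $B_n' = N_{n+1} \times \dots \times N_{n+1}$ with $l_n$ factors), use that the derived subgroup of a direct product distributes over the factors, and identify $N_{n+1}'$ with $B_{n+1}'$, then bootstrap the second identity by induction with $m_{n-1} \cdot l_{n-1} = m_n$. One small streamlining you could make: rather than expanding $N_{n+1}'$ via Lemma~\ref{lemma_Gder}(b) to $N_{n+2} \times \dots \times N_{n+2}$ and then recognising this as $B_{n+1}'$, you can equate $N_{n+1}' = B_{n+1}'$ directly, since Lemma~\ref{lemma_Gder}(a) (with its analogue at level $n+1$) and Corollary~\ref{cor_BnDerived} already give $N_m' = B_m'$ for all $m \geq 0$. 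You are also right to read the second identity as starting at $n \geq 1$; the stated ``$n \geq 0$'' only makes sense for the first identity, as $B_{-1}$ is undefined.
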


\begin{lemma}
$\St_G(n) = G \cap \left(G_n \times \dots \times G_n\right)$ for $n \geq 0$.
\end{lemma}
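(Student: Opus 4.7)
The plan is to prove both inclusions, with the reverse (easy) one first and the forward one by induction on $n$, reducing to Proposition \ref{prop_spinal}(b) applied level by level.

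\textbf{Easy direction.} If $g \in G \cap (G_n \times \dots \times G_n)$, write $g = (g_1,\dots,g_{m_n})_n$ with $g_i \in G_n$. Each factor $g_i$ is by definition an automorphism of a subtree $T_v$ rooted at a vertex $v \in \Omega(n)$, and hence fixes $v$. Therefore $g$ fixes every vertex of level $n$, so $g \in \St_G(n)$.

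\textbf{Hard direction by induction on $n$.} The cases $n=0$ (where $G_0 = G$ and the product has $m_0=1$ factor) and $n=1$ (which is exactly Proposition \ref{prop_spinal}(b) combined with $\St_G(1)=B$) are already settled. Suppose the equality holds for $n-1$, and let $g \in \St_G(n) \subseteq \St_G(n-1)$. By the inductive hypothesis, $g = (g_1,\dots,g_{m_{n-1}})_{n-1}$ with each $g_i \in G_{n-1}$. Because $g$ also fixes the $n$-th level of $T$, each $g_i$ fixes the first level of the subtree $T_{n-1}$, i.e.\ $g_i \in \St_{G_{n-1}}(1)$. Now the group $G_{n-1} = \langle a_{n-1}, b_{n-1} \rangle$ with $b_{n-1} = (b_n, a_n, 1, \dots, 1)_1$ has precisely the same recursive structure on $T_{n-1}$ as $G$ on $T$, so the proof of Proposition \ref{prop_spinal}(b) carries over verbatim to give $\St_{G_{n-1}}(1) \leq G_n \times \dots \times G_n$ (with $l_{n-1}$ factors). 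Substituting each $g_i$ as an element of a product of $l_{n-1}$ copies of $G_n$, we obtain $g \in G_n \times \dots \times G_n$ with $m_{n-1}\cdot l_{n-1}=m_n$ factors, as required.

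\textbf{Main obstacle.} The only delicate point is the re-use of Proposition \ref{prop_spinal}(b) at level $n-1$: one must observe that $G_{n-1}$ acts on $T_{n-1}$ by the \emph{same} two-generator spinal recipe as $G$ acts on $T$, so that the inclusion $\St_{G_{n-1}}(1) \leq G_n \times \dots \times G_n$ is formally identical to the already-proved statement for $G$. Once this is noted, the induction closes immediately and no further computation is needed.
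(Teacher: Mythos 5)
Your proof is correct and follows essentially the same route as the paper's: the reverse inclusion is immediate from the meaning of the coordinate notation, and the forward inclusion is by induction on $n$ with base case Proposition \ref{prop_spinal}(b) and inductive step given by observing that each coordinate of a level-$n$ stabilizing element lies in $\St_{G_{n-1}}(1)$, to which the same argument applies. You have simply unpacked what the paper compresses into the single line $\St_G(n+1) \leq \St_{\St_G(n)}(1)$, making explicit that Proposition \ref{prop_spinal}(b) transfers verbatim to $G_{n-1}$ acting on $T_{n-1}$.
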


\begin{proof}
It is obvious that $G \cap \left(G_n \times \dots \times G_n\right)$ is contained in $\St_G(n)$. The other
inclusion is given by Proposition \ref{prop_spinal} for $n=1$ and follows iteratively from $\St_G(n+1) \leq
\St_{\St_G(n)}(1)$.
\end{proof}

\begin{lemma}\label{eq_b_powers}
$b^{m_{n+1}} = \left(b_n^{m_{n+1}},1,\dots,1\right)_n=\left(b_{n-1}^{m_{n+1}},1,\dots,1\right)_{n-1} \in G$ for $n \geq
0$.
\end{lemma}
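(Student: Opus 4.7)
The plan is to induct on $n\geq 0$, tracking the decomposition of $b^{m_{n+1}}$ at level $n$ of $T$. Both equalities will drop out of the same iteration, with the second one (at level $n-1$) serving as an intermediate step for $n\geq 1$. The base case $n=0$ is essentially tautological: since $m_0=1$, the level-$0$ decomposition has only one entry, and $(b_0^{m_1})_0$ simply records $b^{l_0}=b^{m_1}$.

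For the inductive step, suppose $b^{m_n} = (b_{n-1}^{m_n}, 1, \ldots, 1)_{n-1}$. I would use $m_{n+1}=l_n\cdot m_n$ to write $b^{m_{n+1}} = (b^{m_n})^{l_n}$. Because the decomposition at level $n-1$ carries no permutation component, raising to the $l_n$-th power is entry-wise, giving
\[b^{m_{n+1}} = (b_{n-1}^{m_{n+1}}, 1, \ldots, 1)_{n-1},\]
which is the second claimed equality. To refine it to a level-$n$ decomposition, I would expand $b_{n-1}^{m_{n+1}}$ using the recursion $b_{n-1} = (b_n, a_n, 1, \ldots, 1)$ at the first level of $T_{n-1}$ (again no permutation), obtaining $b_{n-1}^{m_{n+1}} = (b_n^{m_{n+1}}, a_n^{m_{n+1}}, 1, \ldots, 1)$. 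The decisive arithmetic observation is the divisibility $l_n = |a_n| \mid m_{n+1}= l_0 l_1\cdots l_n$, which forces $a_n^{m_{n+1}}=1$ and collapses the inner tuple to $(b_n^{m_{n+1}}, 1, \ldots, 1)$. Nesting this one-step refinement back into the level-$(n-1)$ decomposition yields $(b_n^{m_{n+1}}, 1, \ldots, 1)_n$, which is the first equality.

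I do not anticipate any serious obstacle; the argument is careful bookkeeping paired with one divisibility observation. The only mild subtlety is to justify that coordinate-wise exponentiation is valid at each stage, which holds precisely because every intermediate tuple is level-stabilizing with trivial permutation part, so its powers act independently on each subtree beneath the stabilized level.
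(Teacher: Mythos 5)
Your proof is correct and follows essentially the same route as the paper: both hinge on the single observation that $a_n$ has order $l_n$ and $l_n \mid m_{n+1}$, so that $a_n^{m_{n+1}}=1$ kills the second coordinate when the decomposition $b_{n-1}=(b_n,a_n,1,\dots,1)$ is raised to the power $m_{n+1}$. The paper's own proof is a one-line statement of exactly this cancellation, whereas you make explicit the surrounding induction and the fact that coordinatewise exponentiation is legitimate because the relevant elements have trivial permutation part at the level being decomposed; this is just a fuller write-up of the same idea.
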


\begin{proof}
Every $a_n$ has order $l_n$. Hence $\left(b_{n}, a_n, 1, \dots, 1, \right)_n^{l_0 \dots l_n} =
\left(b_n^{l_0, \dots, l_n}, 1, \dots,
1\right)_n$.
\end{proof}

\begin{lemma}\label{lemma_BnInG}\label{lemma_Bndash_In_rst_G}
The following statements hold for $n \geq 0$:
\begin{enumerate}[(a)]
 \item $B_n' \cdot B_n^{m_{n+1}} \leq G$ where $B_n^{m_{n+1}} = \left<b_n(i)^{m_{n+1}}\right>$ for $n \in
\mathbb{N}$. 
 \item $B_{n-1}' B_{n-1}^{m_n} \times \dots \times B_{n-1}' B_{n-1}^{m_n} \leq \rst_G(n)$.
\end{enumerate}
\end{lemma}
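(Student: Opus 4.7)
The plan is to prove (a) by induction on $n$, with (b) following as a direct consequence via the transitive action of $G$ on $\Omega(n)$. Throughout I interpret a statement of the form $H \leq G$ for $H \leq \Aut(T_n)$ as meaning that $H$, embedded in $\Aut(T)$ by acting on the leftmost level-$n$ subtree $T_{v_1}$ (identified with $T_n$ via the vertex labels) and trivially elsewhere, sits inside $\rst_G(v_1) \leq G$. The base case $n = 0$ is immediate, since $B_0 = B \leq G$ by definition.

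For the inductive step, assume (a) at level $n-1$, so that $B_{n-1}' B_{n-1}^{m_n} \leq \rst_G(u_1)$, where $u_1 \in \Omega(n-1)$ is the leftmost level $n-1$ vertex. By Corollary \ref{cor_BnDerived}, $B_{n-1}' = N_n \times \dots \times N_n$ decomposes as a direct product along the $l_{n-1}$ children of $u_1$. Projecting onto the first factor realises $N_n \leq \rst_G(v_1)$, where $v_1 \in \Omega(n)$ is the leftmost level-$n$ vertex. Since $B_n \leq G_n$ and $G_n' = N_n$ by Proposition \ref{prop_gnDash}, the inclusion $B_n' \leq N_n$ then gives $B_n' \leq \rst_G(v_1)$.

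The cyclic part requires slightly more care. Lemma \ref{eq_b_powers} supplies $b^{m_{n+1}} = (b_n^{m_{n+1}}, 1, \dots, 1)_n \in G$, realising the first generator $b_n(1)^{m_{n+1}} = b_n^{m_{n+1}}$ at $v_1$. For $i \geq 2$ I would use the identity
\[b_n(i)^{m_{n+1}} = (b_n^{m_{n+1}})^{a_n^{i-1}} = b_n^{m_{n+1}} \cdot \left[b_n^{m_{n+1}}, a_n^{i-1}\right],\]
which writes the remaining generators as $b_n^{m_{n+1}}$ times a commutator that lies in $[B_n, A_n] \leq G_n' = N_n$ and has therefore already been realised at $v_1$. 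Hence $B_n^{m_{n+1}} \leq \rst_G(v_1)$, and combining with the previous paragraph yields $B_n' B_n^{m_{n+1}} \leq \rst_G(v_1) \leq G$, which is (a). Part (b) then follows from the transitivity of $G$ on $\Omega(n)$ (Corollary \ref{cor_GmodStab}): conjugation by an element $g_v \in G$ with $g_v v_1 = v$ carries the realisation at $v_1$ into $\rst_G(v)$, and the resulting subgroups assemble into a direct product inside $\rst_G(n) = \prod_{v \in \Omega(n)} \rst_G(v)$.

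The key step — and what I would anticipate as the main obstacle — is noticing the commutator identity in the cyclic part, namely that $b_n(i)^{m_{n+1}}$ differs from $b_n^{m_{n+1}}$ by an element of $N_n$, a subgroup that the first half of the argument has already realised at $v_1$ as a by-product of the inductive hypothesis. Once this is spotted, the proof reduces to careful bookkeeping with the self-similar decompositions supplied by Lemma \ref{eq_b_powers}, Corollary \ref{cor_BnDerived}, and Proposition \ref{prop_gnDash}.
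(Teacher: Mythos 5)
Your argument is correct and reaches the conclusion by essentially the same route as the paper, namely Lemma \ref{lemma_prod} (via $N_n \times \dots \times N_n = G^{(n+1)} \leq \rst_G(n)$) combined with Lemma \ref{eq_b_powers}. Two remarks on the differences. First, the induction wrapper is unnecessary: you use the inductive hypothesis only to deduce $B_{n-1}' \leq \rst_G(u_1)$ and then extract $N_n \leq \rst_G(v_1)$ from the decomposition $B_{n-1}' = N_n \times \dots \times N_n$, but the paper's Lemma \ref{lemma_prod} parts~(\ref{part_3}) and~(\ref{part_4}) already give $N_n \times \dots \times N_n = G^{(n+1)} \leq \rst_G(n)$ directly, so $N_n$ sitting at $v_1$ lies in $G$ with no induction at all; the paper runs the proof in a single step for exactly this reason. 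Second, and more usefully, you make explicit a step the paper's one-line proof passes over in silence: Lemma \ref{eq_b_powers} only realises $b_n(1)^{m_{n+1}} = b_n^{m_{n+1}}$ at $v_1$, and one does need your identity $b_n(i)^{m_{n+1}} = b_n^{m_{n+1}} \cdot \bigl[b_n^{m_{n+1}}, a_n^{i-1}\bigr]$ together with $\bigl[b_n^{m_{n+1}}, a_n^{i-1}\bigr] \in G_n' = N_n$ to put the remaining generators $b_n(i)^{m_{n+1}}$, $i \geq 2$, into the already-realised coset $b_n^{m_{n+1}} N_n$. This is precisely the content of ``together with Lemma \ref{eq_b_powers}'' in the paper, and you are right that it is the one non-bookkeeping idea in the lemma. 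One small caution on part~(b): conjugating the realisation at $v_1$ by $g_v$ with $g_v v_1 = v$ gives $(B_n' B_n^{m_{n+1}})^{g_{v_1}}$ at $v$, twisted by the section $g_{v_1} \in G_n$, rather than $B_n' B_n^{m_{n+1}}$ itself; to land back on the nose you still need to absorb the conjugating twist into $N_n$ via the same commutator device ($x^{g_{v_1}} = x[x,g_{v_1}]$ with $[x,g_{v_1}] \in G_n' = N_n$), or simply note that $N_n$ is already realised at \emph{every} vertex of $\Omega(n)$ since $G^{(n+1)} = \prod N_n$, so the whole argument runs verbatim at each $v$, which is cleaner than invoking transitivity.
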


\begin{proof}
Lemma \ref{lemma_prod} implies $B_n'\times \dots \times B_n' = N_n' \times \dots
\times N_n' \leq N_n \times \dots \times N_n = G^{(n+1)}$ and together with Lemma \ref{eq_b_powers} this gives $B_n'
\cdot
B_n^{m_{n+1}} \leq G$ which proves both parts.
\end{proof}

\begin{lemma}
$\left(G_{n+1} \times \dots \times G_{n+1} \cap G\right)' = G_{n+1}' \times \dots \times G_{n+1}'$ for $n \geq 0$.
\end{lemma}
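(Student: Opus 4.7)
The plan is to prove two inclusions, both exploiting the previous lemma $\St_G(n+1) = G \cap (G_{n+1} \times \dots \times G_{n+1})$ (so the set on the left of the claim is just $\St_G(n+1)$) together with the direct-product decomposition of the higher derived subgroups given in Lemma \ref{lemma_generatingN}(c), namely $G^{(n+2)} = G_{n+1}' \times \dots \times G_{n+1}'$ with $m_{n+1}$ factors.

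For the easy inclusion $\St_G(n+1)' \leq G_{n+1}' \times \dots \times G_{n+1}'$, I would simply use that $\St_G(n+1) \leq G_{n+1} \times \dots \times G_{n+1}$ and that commutators in a direct product are computed coordinatewise, so the derived subgroup of a subgroup of $G_{n+1} \times \dots \times G_{n+1}$ lands in $(G_{n+1} \times \dots \times G_{n+1})' = G_{n+1}' \times \dots \times G_{n+1}'$. No arithmetic is needed here.

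The substantive direction is $G_{n+1}' \times \dots \times G_{n+1}' \leq \St_G(n+1)'$. By Lemma \ref{lemma_generatingN}(c), the left-hand side equals $G^{(n+2)} = [G^{(n+1)}, G^{(n+1)}]$, so it suffices to show $G^{(n+1)} \leq \St_G(n+1)$. Using part (c) of the same lemma one level lower, $G^{(n+1)} = G_n' \times \dots \times G_n' = N_n \times \dots \times N_n$ (the last equality by Proposition \ref{prop_gnDash}), with $m_n$ factors. Since each $N_n \leq B_n = \St_{G_n}(1)$, each factor fixes the first level of the subtree hanging from the corresponding level-$n$ vertex; hence the whole product fixes every vertex on level $n+1$ of the original tree, i.e.\ $G^{(n+1)} \leq \St_G(n+1)$. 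Taking commutators gives $G^{(n+2)} \leq \St_G(n+1)'$, which is the inclusion we wanted.

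The main obstacle is really the bookkeeping: one has to be careful that the number of factors on the two sides of Lemma \ref{lemma_generatingN}(c) match up correctly after shifting the index by one, and that ``$N_n \leq \St_{G_n}(1)$ at each of the $m_n$ level-$n$ coordinates'' is indeed equivalent to fixing all $m_{n+1}$ vertices at level $n+1$. Once these identifications are in place, no further commutator calculations are required, and combining the two inclusions completes the proof.
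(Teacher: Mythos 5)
Your proposal is correct and follows essentially the same route as the paper: both directions rest on the identification $G^{(n+2)} = G_{n+1}' \times \dots \times G_{n+1}'$ from Lemma~\ref{lemma_generatingN}(c) together with the containment $G^{(n+1)} = G_n' \times \dots \times G_n' \leq G \cap (G_{n+1} \times \dots \times G_{n+1})$. The paper phrases this as a single sandwich chain of inclusions, whereas you separate the easy and hard directions and add the (correct, and helpful) justification that $N_n \leq B_n = \St_{G_n}(1)$ to see $G^{(n+1)} \leq \St_G(n+1)$, which the paper leaves implicit; the mathematical content is the same.
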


\begin{proof}
We have $G_n' \times \dots \times G_n' \leq \left(G_{n+1} \times \dots \times G_{n+1}\right) \cap G$ and we get 
\[G_{n+1}' \times \dots \times G_{n+1}' = G_n'' \times \dots \times G_n'' \leq \left( \left( G_{n+1} \times \dots \times
G_{n+1}\right) \cap G \right)'\]
\[\leq \left(G_{n+1}' \times \dots \times G_{n+1}'\right) \cap G' = G^{(n+2)} \cap G' = G^{(n+2)} = G_{n+1}' \times \dots \times G_{n+1}'. \]
\end{proof}

\begin{lemma}\label{lemma_rst_inStG}
The following statements hold:
\begin{enumerate}[(a)]
 \item $\rst_G(1) = B' \cdot B^{l_1}$.
 \item $\rst_G(n) \leq \prod_{i=1}^{m_{n-1}} B_{n-1}' \cdot B^{m_{n+1}}$  for $n \geq 1$.
 \item \label{lemma_rst_inStGPart3} $\rst_G(n) \leq \St_G(n+1)$ for $n \geq 1$.
\end{enumerate}
\end{lemma}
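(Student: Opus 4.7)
\emph{Plan.} For part (a), the containment $B'B^{l_1} \subseteq \rst_G(1)$ is Lemma \ref{lemma_comm_inrst}, so all the work lies in the reverse inclusion. My approach is to build an abelian ``fingerprint'' of the level-$1$ sections that detects precisely when an exponent fails to be divisible by $l_1$. Set $H := G_1^{\mathrm{ab}}/\langle l_1\bar b_1\rangle$. The class $\bar a_1$ has order exactly $l_1$ in $G_1^{\mathrm{ab}}$ since the rooted action on level $1$ of $T_1$ already separates its powers; and $\bar b_1$ has infinite order, as can be seen by composing $G_1 \twoheadrightarrow G_1/\St_{G_1}(n) = A_n \wr \dots \wr A_1$ with abelianisation and observing that $b_1$ is sent to an element whose image in each factor $A_i$ ($i \geq 2$) is a generator, while the orders $l_i$ form an unbounded set of distinct primes. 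A rank argument then gives $G_1^{\mathrm{ab}} \cong \mathbb{Z}/l_1 \oplus \mathbb{Z}$ with independent classes $\bar a_1, \bar b_1$, so $H \cong (\mathbb{Z}/l_1)^2$.

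Define $\rho : \St_G(1) \to H^{l_0}$ by $\rho(g) = (\overline{g|_{u_1}}, \dots, \overline{g|_{u_{l_0}}})$. Abelianity of $H$ gives $B' \subseteq \ker\rho$, and the relations $l_1\bar a_1 = l_1\bar b_1 = 0$ in $H$ give $B^{l_1} \subseteq \ker\rho$. From $b(k) = (\dots, b_1, a_1, \dots)_1$ with $b_1$ at position $k$ and $a_1$ at position $k+1$, one reads off that $\rho(b(k))$ has value $\bar b_1$ at position $k$, $\bar a_1$ at position $k+1$, and $0$ elsewhere; writing $g \in B$ as $g \equiv \prod_k b(k)^{e_k} \pmod{B'}$, the $i$-th coordinate of $\rho(g)$ is $e_i\bar b_1 + e_{i-1}\bar a_1 \in H$. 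For $g \in \rst_G(u_j)$ the sections outside $u_j$ vanish, so $\rho(g)$ is zero at every $i \neq j$; independence of $\bar a_1, \bar b_1$ in $H$ forces $l_1 \mid e_i$ and $l_1 \mid e_{i-1}$ for each such $i$, and as $i$ cycles through $\{1,\dots,l_0\}\setminus\{j\}$ every index is hit. Hence $\prod_k b(k)^{e_k} \in B^{l_1}$ and $g \in B'B^{l_1}$; the direct-product decomposition $\rst_G(1) = \prod_j \rst_G(u_j)$ extends this to all of $\rst_G(1)$.

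For part (b), I would iterate the argument of (a) on each subtree rooted at a level-$(n-1)$ vertex. An element $g \in \rst_G(n)$ restricts along the level-$(n-1)$ decomposition to a tuple $(g|_{T_v})_{v\in\Omega(n-1)}$ with each component lying in $\rst_{G_{n-1}}(1)$, since rigid stabilisers of level-$n$ vertices of $T$ restrict to rigid stabilisers of the corresponding level-$1$ vertices of $T_v \cong T_{n-1}$. The verbatim analogue of (a) applied to $G_{n-1}$ (with the sequence $l_0, l_1, \dots$ shifted to $l_{n-1}, l_n, \dots$) identifies $\rst_{G_{n-1}}(1) = B_{n-1}' B_{n-1}^{l_n}$, and the stated product bound follows. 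Part (c) is then immediate: the commutator generators of $B_{n-1}'$ have single non-trivial section $[a_n,b_n] \in G_n' \leq \St_{G_n}(1)$, and the generators $b_{n-1}(k)^{l_n}$ of $B_{n-1}^{l_n}$ have single non-trivial section $b_n^{l_n} \in \St_{G_n}(1)$ (using $a_n^{l_n} = 1$); every factor in (b) therefore lies in $\St_{G_{n-1}}(2)$, and combining across the $m_{n-1}$ factors yields $\rst_G(n) \leq \St_G(n+1)$.

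The main obstacle is the structural input to (a): verifying that $\bar a_1$ and $\bar b_1$ really are independent in $G_1^{\mathrm{ab}}$, so that $H$ genuinely has size $l_1^2$. Once this is secured the fingerprint $\rho$ does essentially all the work, and parts (b) and (c) are routine consequences.
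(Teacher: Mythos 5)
Your proposal is correct and follows the same overall scheme as the paper: establish (a) first, restrict $\rst_G(n)$ along level $n-1$ and invoke the shifted version of (a) for $G_{n-1}$ to get (b), and read (c) off the section structure of the generators of $B_{n-1}'B_{n-1}^{l_n}$. The genuine contribution is in (a). The paper disposes of (a) in one sentence, citing an earlier lemma (which supplies only the containment $B'B^{l_1} \leq \rst_G(1)$) together with $\rst_G(1) \leq B = \St_G(1)$; the reverse inclusion $\rst_G(1) \leq B'B^{l_1}$ is never actually argued. Your fingerprint map $\rho$ supplies precisely that missing step: for $g \in \rst_G(u_j)$ the vanishing of the coordinates at $i \neq j$, combined with independence of $\bar a_1,\bar b_1$ in $H \cong (\mathbb{Z}/l_1)^2$, forces $l_1 \mid e_k$ for every $k$ (since as $i$ runs over $\{1,\dots,l_0\}\setminus\{j\}$, the indices $i$ and $i-1$ together cover all residues mod $l_0$), whence $g \in B'B^{l_1}$, and the internal direct-product decomposition of $\rst_G(1)$ finishes it. The structural input you flag is sound: $\bar a_1$ has finite order $l_1$ (seen in $G_1/\St_{G_1}(1) = A_1$) while $\bar b_1$ has infinite order (seen in the congruence quotients), so $\langle\bar a_1\rangle\cap\langle\bar b_1\rangle=0$ and $G_1^{\mathrm{ab}}\cong C_{l_1}\times\mathbb{Z}$ --- this is exactly what the paper later records as $G_n^{\mathrm{ab}}\cong C_{l_n}\times C_\infty$. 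One minor simplification you could make: working in $G_1^{\mathrm{ab}}$ directly (without quotienting by $l_1\bar b_1$) gives the sharper conclusion $e_i=0$ for $i\neq j$ and $l_1\mid e_j$, but your finite target is perfectly adequate. Finally, note that the paper's stated bound in (b), $\prod B_{n-1}'\cdot B^{m_{n+1}}$, is almost certainly a typo for $\prod B_{n-1}'\cdot B_{n-1}^{l_n}$ (which is what the displayed proof and your argument actually produce, and what (c) requires).
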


\begin{proof}
We first see that $\rst_G(1)= B' \cdot B^{l_1}$ because of Lemma \ref{lemma_Bndash_In_rst_G} and $\rst_G(1) \leq
B=\St_G(1)$. Hence $\rst_G(n) \leq \prod_{i=1}^{m_{n-1}} \rst_{G_{n-1}}(1) = \prod B_{n-1}' \cdot B^{m_{n+1}}$
which fixes layer $n+1$.
\end{proof}

\begin{prop}\label{cor_rstdash_fg}
$\rst_G(n)'=G^{(n+2)}$ for $n \geq 1$, in particular $\rst_G(n)'$ is finitely generated.
\end{prop}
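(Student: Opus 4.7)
The plan is to prove the equality via two easy inclusions, both of which fall out of the structural results already assembled; finite generation then follows by reading off the product decomposition of $G^{(n+2)}$.

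For the containment $G^{(n+2)} \leq \rst_G(n)'$, I would directly invoke Lemma \ref{lemma_Gder}(\ref{part_4}), which asserts $G^{(n+1)} \leq \rst_G(n)$. Taking the derived subgroup of both sides yields $G^{(n+2)} = (G^{(n+1)})' \leq \rst_G(n)'$ immediately.

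For the reverse inclusion, I would chain Lemma \ref{lemma_rst_inStG}(\ref{lemma_rst_inStGPart3}) with the preceding lemma to obtain $\rst_G(n) \leq \St_G(n+1) = G \cap (G_{n+1} \times \dots \times G_{n+1})$, where the direct product has $m_{n+1}$ factors. Because the derived subgroup of any subgroup of $G_{n+1} \times \dots \times G_{n+1}$ is contained in $G_{n+1}' \times \dots \times G_{n+1}'$, this gives $\rst_G(n)' \leq G_{n+1}' \times \dots \times G_{n+1}'$, and Lemma \ref{lemma_Gder}(\ref{part_3}), applied with $n$ replaced by $n+1$, identifies the right-hand side with $G^{(n+2)}$.

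Finite generation is then essentially free. By the same instance of Lemma \ref{lemma_Gder}(\ref{part_3}) combined with Proposition \ref{prop_gnDash}, $G^{(n+2)}$ is a direct product of $m_{n+1}$ copies of $N_{n+1}$, and each $N_{n+1}$ is finitely generated by the natural analog of Lemma \ref{lemma_Nfg} — its proof requires only $l_{n+1} \geq 6$, which is guaranteed by the standing assumption $l_i \geq 7$. I do not anticipate a genuine obstacle in this argument; the only mild point of care is the level bookkeeping, namely ensuring that $\rst_G(n)$ is sandwiched between $G^{(n+1)}$ and $\St_G(n+1)$ so that the derived subgroup lands exactly at the level-$(n+1)$ product, rather than one level too high.
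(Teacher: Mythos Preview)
Your argument is correct and follows essentially the same route as the paper: both sandwich $\rst_G(n)$ between $G^{(n+1)}$ (from Lemma~\ref{lemma_Gder}(\ref{part_4})) and $\St_G(n+1) \leq G_{n+1}\times\dots\times G_{n+1}$ (from Lemma~\ref{lemma_rst_inStG}(\ref{lemma_rst_inStGPart3})), then take derived subgroups and invoke Lemma~\ref{lemma_Gder}(\ref{part_3}). The paper additionally works out the case $n=1$ explicitly via $\rst_G(1)=B'B^{l_1}$ and routes the upper bound through the auxiliary identity $(G_{n+1}\times\dots\times G_{n+1}\cap G)'=G_{n+1}'\times\dots\times G_{n+1}'$, but your version shows these detours are unnecessary.
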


\begin{proof}
Lemma \ref{lemma_rst_inStG} states $\rst_G(1) = B' \cdot B^{l_1}$ and therefore
$\rst_G(1)' = B'' \cdot \left[B', B^{l_1}\right]
\left(B^{l_1}\right)'$. For the first group we have $B'' = B_1' \times \dots \times B_1'$ and for the last one we see
that $B^{l_1} \leq B_1 \times \dots \times B_1$. It therefore remains to observe that $\left[B', B^{l_1} \right] \leq
\prod B_1'$ which follows from $B' \leq B_1$ and $B^{l_1} \leq B_1$. This implies $\rst_G(1)' = B_1' \times \dots \times
B_1' = N_2 \times \dots \times N_2$ by Corollary \ref{cor_BnDerived} which is finitely generated. It is now left to show
that this implies $\rst_G(n)'$ is finitely generated for all $n \in \mathbb{N}$. By Lemma
\ref{lemma_rst_inStG}(\ref{lemma_rst_inStGPart3}) we have the following inclusions:

\[ \rst_G(n)' \leq \left(G_{n+1} \times \dots \times G_{n+1} \cap G\right)'\] \[= G_{n+1}'\times \dots \times
G_{n+1}' = \left(G_n'
\times \dots \times G_n'\right)' \leq \rst_G(n)'\] because $G_{n+1}'\times \dots \times G_{n+1}' = G^{(n+2)}
\leq G'$.
So by this we have \[N_{n+1} \times \dots \times N_{n+1} = G_{n+1}' \times \dots \times G_{n+1}' = \left(G_n' \times
\dots \times G_n'\right)' = \rst_G(n)'\] which is therefore finitely generated by Lemma \ref{lemma_Nfg}.\\
\end{proof}

\begin{theorem}\label{thm_quotFinite}
The group $G$ is a branch group. Further the quotient $\frac{\St_G(n)}{\rst_G(n)}$ for $n \geq 1$ is abelian and has
exponent dividing $l_1 l_2 \dots l_{n-1}  l_n$.
\end{theorem}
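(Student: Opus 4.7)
The plan is to verify three things: that $G$ acts transitively on each $\Omega(n)$, that $\St_G(n)/\rst_G(n)$ is abelian, and that its exponent divides $l_1\cdots l_n$. Transitivity is immediate from Corollary~\ref{cor_GmodStab}, since the induced action of $G/\St_G(n)\cong A_{n-1}\wr\cdots\wr A_0$ on $\Omega(n)$ is transitive. Once abelianness and a bounded exponent are established, $\St_G(n)/\rst_G(n)$ will be a finitely generated abelian group of bounded exponent, hence finite; combined with $|G:\St_G(n)|<\infty$, this yields the branch property.

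Abelianness follows from the embedding $\St_G(n)\leq G_n\times\cdots\times G_n$ (direct product of $m_n$ copies): one has $\St_G(n)'\leq(G_n')^{m_n}=N_n^{m_n}=G^{(n+1)}$ by Proposition~\ref{prop_gnDash} and Lemma~\ref{lemma_Gder}(c), and $G^{(n+1)}\subseteq\rst_G(n)$ by Lemma~\ref{lemma_Gder}(d).

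For the exponent, I first sharpen Lemma~\ref{eq_b_powers}: the same iteration, but using only $a_k^{l_k}=1$ at each level $k\leq n$, yields the stronger identity $b^{l_1\cdots l_n}=(b_n^{l_1\cdots l_n},1,\ldots,1)_n\in\rst_G(u_1)\subseteq\rst_G(n)$. By the transitive $G$-action on $\Omega(n)$, for each $i$ some $G$-conjugate of $b^{l_1\cdots l_n}$ lies in $\rst_G(u_i)$; a level-by-level check shows that the transport between distinct level-$n$ subtrees induced by any conjugating element from $G$ is itself an element of $G_n$, so the position-$i$ entry, reduced modulo $N_n=G_n'$, equals $l_1\cdots l_n\cdot\overline{b_n}$ in $G_n/N_n$. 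Now for any $g=(g_1,\ldots,g_{m_n})_n\in\St_G(n)$, the image of $g_i^{l_1\cdots l_n}$ in $G_n/N_n$ has its $a_n$-component killed (since $l_n\mid l_1\cdots l_n$ and $a_n$ has order $l_n$), so lies in $l_1\cdots l_n\cdot\langle\overline{b_n}\rangle$. Adding up these per-coordinate contributions places the image of $g^{l_1\cdots l_n}$ in $(G_n/N_n)^{m_n}$ inside the image of $\rst_G(n)$; since the kernel of the map $\St_G(n)\to(G_n/N_n)^{m_n}$ is exactly $G^{(n+1)}\subseteq\rst_G(n)$, this forces $g^{l_1\cdots l_n}\in\rst_G(n)$.

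The main technical obstacle is verifying the sharpened form of Lemma~\ref{eq_b_powers} and the self-similarity claim that $G$-transports between distinct level-$n$ subtrees lie in $G_n$. Once these are in place, the remainder of the exponent argument reduces to the elementary observation that $a_n$ has order dividing $l_n$ in the abelianization $G_n/N_n$, so that the factor $l_n$ in $l_1\cdots l_n$ kills the $a_n$-part of every coordinate and the remaining $b_n$-part is already accounted for by the $G$-conjugates of $b^{l_1\cdots l_n}$ together with $G^{(n+1)}$.
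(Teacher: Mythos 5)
Your proposal is correct and follows the same overall strategy as the paper: abelianness from $\St_G(n)' \leq (G_n')^{m_n} = G^{(n+1)} \leq \rst_G(n)$, a bounded exponent from powers of $b$ landing in $\rst_G(n)$, and the branch property as a corollary of these together with $|G:\St_G(n)|<\infty$. Where you diverge, to your advantage, is in the exponent bound. The paper's exponent step is the single compressed line $\St_G(n)^{l_1\cdots l_n} \leq (B_{n-1}\times\cdots\times B_{n-1})^{l_1\cdots l_n} = (B_n\times\cdots\times B_n)^{l_1\cdots l_n} \leq \rst_G(n)$, citing Lemma~\ref{eq_b_powers}; but that lemma as stated only provides $b^{m_{n+1}}=b^{l_0 l_1\cdots l_n}\in\rst_G(u_1)$, which on its own would give exponent dividing $l_0 l_1 \cdots l_n$. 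Your sharpening of Lemma~\ref{eq_b_powers} --- noting that $a_0$ never appears among the sections of $b$, so the $l_0$ factor is superfluous and already $b^{l_1\cdots l_n}=(b_n^{l_1\cdots l_n},1,\ldots,1)_n\in\rst_G(u_1)$ --- is exactly what is needed for the stated bound. Your route through $(G_n/N_n)^{m_n}$, conjugating this element around level $n$ by transitivity and noting that the level-$n$ sections of any $G$-element lie in $G_n$ (hence act trivially on $G_n/N_n=G_n/G_n'$), is a clean and explicit way to finish, and correctly identifies the kernel of $\St_G(n)\to(G_n/N_n)^{m_n}$ as $G^{(n+1)}\leq\rst_G(n)$. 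If you write this out in full, do record the one-line check of the self-similarity claim (the sections of $a_0$ and $b$ at every level-$n$ vertex lie in $\{1,a_n,b_n\}\subseteq G_n$, and sections of products are products of sections), and the remark that $\St_G(n)$ is finitely generated --- being of finite index in the finitely generated $G$ --- which is what lets you conclude that an abelian group of bounded exponent is finite.
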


\begin{proof}
In the case $n=1$ we have $\St_G(1) = B$. We have $\St_G(n) \leq B_{n-1} \times \dots \times B_{n-1}$ for $n > 1$ and so
\[\St_G(n)^{l_1  \dots  l_n} \leq \left(B_{n-1} \times \dots \times
B_{n-1}\right)^{l_1  \dots  l_n} = \left(B_{n} \times \dots \times B_{n}\right)^{l_1  \dots  l_n} \leq \rst_G(n)\] by
Lemma \ref{eq_b_powers}. Now Lemma \ref{lemma_prod} implies \[\St_G(n)'=G' \cap (G_n'\times \dots
\times G_n') = G' \cap G^{(n+1)} \leq \rst_G(n).\] The quotient $\frac{\St_G(n)}{\rst_G(n)}$ is therefore abelian and
has
exponent dividing $l_1 l_2 \dots  l_{n-1}  l_n$. The $n$-th level stabilizers $\St_G(n)$ always have finite index, hence
$\rst_G(n)$ is of finite index in $G$.
\end{proof}

\begin{lemma}\label{lemma_cyclic}
$\frac{B_n}{N_n} \simeq \mathbb{Z}$ for $n \geq 0$.
\end{lemma}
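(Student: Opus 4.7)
The plan is to exhibit $B_n/N_n$ as a cyclic group whose generator has infinite order, which then forces it to be $\mathbb{Z}$. First I would observe that by the definition of $N_n$ together with Lemma \ref{lemma_N}, in the quotient $B_n/N_n$ every generator $b_n(i)$ maps to the common class $\bar b := b_n N_n$, so $B_n/N_n = \langle \bar b\rangle$ is cyclic. It therefore suffices to show that $b_n$ has infinite order modulo $N_n$; and by Proposition \ref{prop_gnDash} (which gives $N_n = G_n'$), this is equivalent to $b_n$ having infinite order in $G_n/G_n'$.

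The core step is to probe $G_n/G_n'$ through the finite quotients $G_n/\St_{G_n}(k)$. By Corollary \ref{cor_GmodStab} applied to $G_n$ in place of $G$, one has $G_n/\St_{G_n}(k) \cong A_{n+k-1}\wr\cdots\wr A_n$. I would abelianize this iterated wreath product by induction on $k$: since $A_i$ permutes the $l_i$ copies of the next factor cyclically, the $A_i$-coinvariants of $(H/H')^{l_i}$ collapse under the sum map to a single copy of $H/H'$, yielding
\[
\bigl(A_{n+k-1}\wr\cdots\wr A_n\bigr)/\bigl(A_{n+k-1}\wr\cdots\wr A_n\bigr)' \;\cong\; A_{n+k-1}\times\cdots\times A_n.
\]

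Next I would compute the image of $b_n$ in this abelian group. Unpacking $b_n = (b_{n+1},a_{n+1},1,\dots,1)_n$ recursively and tracking how each wreath layer receives contributions from its two non-trivial entries, a short induction on $k$ gives image $(a_{n+k-1},a_{n+k-2},\dots,a_{n+1},0)$. Because the $l_i$ are pairwise distinct primes, this element has order $l_{n+1}l_{n+2}\cdots l_{n+k-1}$, which tends to infinity with $k$. Hence the image of $b_n$ in $G_n/G_n'$ has infinite order, so $\bar b$ has infinite order in $B_n/N_n$, and the cyclic group $B_n/N_n$ must be $\mathbb{Z}$.

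The main obstacle is the wreath-product bookkeeping in the second step: verifying the abelianization formula and tracing the image of $b_n$ precisely. The pairwise distinctness of the primes $l_i$ is what prevents any cancellation in the displayed product and forces the orders to grow; the rest is formal manipulation.
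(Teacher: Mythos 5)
Your proof is correct, and it takes a genuinely different route from the paper's. The paper's proof of this lemma is the abstract free-group one: it takes the surjection $\pi\colon F_{l_0}\to B$, $x_i\mapsto b(i)$, observes that the abstract kernel $K(x_1,\dots,x_{l_0})$ of the exponent-sum map has $F_{l_0}/K\simeq\mathbb{Z}$, and concludes that $B/N$ (being $\pi(F)/\pi(K)$) is infinite cyclic. As written, that argument only gives that $B/N$ is a \emph{quotient} of $\mathbb{Z}$, hence cyclic; it silently uses the fact that the quotient is infinite, i.e.\ that $\ker\pi\leq K$, which amounts to the fact that the common image $\bar b$ of the $b(i)$ has infinite order in $B/N$. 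Your proof supplies exactly this missing piece: you first derive cyclicity directly from Lemma \ref{lemma_N} (all $b_n(i)$ are congruent mod $N_n$), and then prove infinitude by pushing $b_n$ through the finite quotients $G_n/\St_{G_n}(k)\cong A_{n+k-1}\wr\cdots\wr A_n$, abelianizing the iterated wreath product to $A_{n+k-1}\times\cdots\times A_n$, and reading off that the image of $b_n$ is $(a_{n+k-1},\dots,a_{n+1},1)$, of order $l_{n+1}\cdots l_{n+k-1}\to\infty$ by coprimality. This is precisely the technique the paper itself uses later in Theorem \ref{thm_ab_cyclic} to show $b$ has infinite order in $G^{ab}$, so you have in effect rearranged the logical order to make the argument self-contained. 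The only minor thing to tidy: you say ``pairwise distinct primes,'' but pairwise coprimality is what the order computation actually needs and is the hypothesis stated in the construction; and it is worth making explicit that the identification $N_n=G_n'$ (Proposition \ref{prop_gnDash}) lets you pass from $B_n/N_n$ to a subgroup of $G_n^{ab}$ so that the finite-quotient computation applies.
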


\begin{proof}
Let $F_{l_0}=\left<x_1, \dots, x_{l_0}\right>$ be the free group on $l_0$ generators and $\pi$ the
natural projection \[\pi: \begin{cases}F_{l_0} & \longrightarrow B \\ x_i & \longmapsto b(i) \in
B\end{cases}.\] 

The map from equation (\ref{eq_f}) together with the natural injection \[\iota: \begin{cases}N(x_1,\dots, x_{l_0}) &
\hookrightarrow
F_{l_0} \\ x_i & \mapsto x_i \h\h \textnormal{ for all } \h i=1,\dots,l_0\end{cases}\]
gives the following sequence:

\begin{center}
\begin{tabular}{ccccccccc}
  $1$ & $\hookrightarrow$ & $N(x_1,\dots,x_{l_0})$ & $\hookrightarrow$ & $F_{l_0}$ &
$\twoheadrightarrow$ & $\mathbb{Z}$ & $\rightarrow$ & 0\\
  & & $\downarrow \pi$ \\
  & & $N \pi \leq B$
\end{tabular}
\end{center}

We see that $F_{l_0}/N \simeq \mathbb{Z}$
and hence its image $B/N$ under $\pi$ must be an infinite cyclic
group.
\end{proof}

\subsection{Finite Generation of Normal Subgroups}

We quote a theorem by Grigorchuk \cite{grigor_bg}.

\begin{theorem}\label{lemma_grigor}
Let $\Gamma \leq \Aut(T)$ be a spherically transitive subgroup of the full automorphism group on $T$. If $1 \neq N \lhd
\Gamma$, then there exists an $n$ such that $\rst_\Gamma(n)' \leq N$.
\end{theorem}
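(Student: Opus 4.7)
The plan is to take a single nontrivial $g \in N$ and, via commutator identities between $g$ and elements of rigid vertex stabilisers, force a piece of the derived structure at a suitably deep level into $N$; spherical transitivity is then used to spread the resulting containment across all vertices of that level. Since any subgroup of $\Aut(T)$ has trivial intersection of its level stabilisers, there is a smallest level $n_0$ with $g \notin \St_\Gamma(n_0)$. Then $g$ moves some vertex $u \in \Omega(n_0)$ to $g(u) \neq u$, and symmetrically $u'' := g^{-1}(u) \neq u$, so the subtrees $T_u$ and $T_{u''}$ are disjoint.

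For any $h \in \rst_\Gamma(u)$ a direct calculation shows $[g,h] \in N$ is supported on $T_u \cup T_{u''}$, acting as $h$ on $T_u$ and as $g^{-1}h^{-1}g$ on $T_{u''}$. For $h_1, h_2 \in \rst_\Gamma(u)$ the iterated commutator $Y := [[g,h_1],[g,h_2]] \in N$ acts as $[h_1,h_2]$ on $T_u$ and as $g^{-1}[h_1^{-1},h_2^{-1}]g$ on $T_{u''}$. Conjugation by $\rho \in \rst_\Gamma(u)$ leaves the $T_{u''}$-component of $Y$ unchanged (since $\rho$ has support disjoint from $T_{u''}$) and transforms its $T_u$-component into $[h_1,h_2]^\rho$, so $Y \cdot (Y^\rho)^{-1} \in N$ is supported purely on $T_u$ and equals $[[h_1,h_2]^{-1}, \rho]$ there. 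Letting $h_1,h_2,\rho$ vary in $\rst_\Gamma(u)$ therefore gives $[\rst_\Gamma(u)', \rst_\Gamma(u)] \leq N$, and spherical transitivity of $\Gamma$ on $\Omega(n_0)$ spreads this to $[\rst_\Gamma(n_0)', \rst_\Gamma(n_0)] \leq N$.

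The main obstacle is that we have so far captured only the third term of the lower central series of $\rst_\Gamma(n_0)$, not the full derived subgroup $\rst_\Gamma(n_0)'$. To close the gap I would iterate the construction: pick a nontrivial element of $[\rst_\Gamma(u)', \rst_\Gamma(u)] \cap N$, which is supported on $T_u$; locate a deeper vertex at which it acts nontrivially; and re-run the entire argument there. Combined with the normality of $N$ in $\Gamma$ and spherical transitivity at deeper levels, this iteration amplifies the containment until, after finitely many steps, the full derived subgroup $\rst_\Gamma(n)'$ at some sufficiently large level $n$ is absorbed into $N$. Tracking precisely what is gained at each step of this iteration, and proving termination at a finite level, is the delicate technical heart of the argument.
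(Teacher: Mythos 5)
You have the right mechanism — isolating the action at a vertex $u$ moved by some $1 \neq g \in N$ through commutators with $\rst_\Gamma(u)$, then spreading the resulting containment across a level by spherical transitivity — but your specific choice of commutator is what creates the gap you acknowledge, and the proposed iteration does not close it. Working with $Y = [[g,h_1],[g,h_2]]$ forces both inner factors to carry a nontrivial component on $T_{u''}$, and killing it by the extra conjugation with $\rho$ costs one more commutator, leaving you only with $[\rst_\Gamma(u)',\rst_\Gamma(u)] \leq N$. Re-running the construction on a deeper nontrivial element of $N$ reproduces the \emph{same} shape of conclusion one level down — $[\rst_\Gamma(w)',\rst_\Gamma(w)] \leq N$ for a deeper vertex $w$, not $\rst_\Gamma(w)' \leq N$ — so there is no visible mechanism by which the containment improves, and nothing in the hypotheses rules out $\rst_\Gamma(m)'$ being central in $\rst_\Gamma(m)$ for every $m$, in which case the iteration gains nothing at all.

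The repair is to take $[[g,h_1],h_2]$ with $h_1,h_2 \in \rst_\Gamma(u)$ rather than your $[[g,h_1],[g,h_2]]$. Write $[g,h_1] = \alpha\beta$ with $\alpha = g^{-1}h_1^{-1}g$ supported on $T_{u''}$ and $\beta = h_1$ supported on $T_u$. Since $h_2$ is supported on $T_u$ and $T_u \cap T_{u''} = \emptyset$, the element $\alpha$ commutes with $h_2$, and hence
\[
[[g,h_1],h_2] = [\alpha\beta,\,h_2] = [\beta,h_2] = [h_1,h_2],
\]
supported entirely on $T_u$ — there is no unwanted $T_{u''}$-component to cancel in the first place. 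Both $[g,h_1] = g^{-1}g^{h_1}$ and $[[g,h_1],h_2] = [g,h_1]^{-1}[g,h_1]^{h_2}$ lie in $N$ by normality, so $\rst_\Gamma(u)' \leq N$ directly, with no loss and no iteration. Spherical transitivity of $\Gamma$ on $\Omega(n_0)$ gives, for each $w$ on that level, a $\gamma \in \Gamma$ with $\rst_\Gamma(w)' = (\rst_\Gamma(u)')^\gamma \leq N^\gamma = N$; and since $\rst_\Gamma(n_0)$ is the internal direct product of the $\rst_\Gamma(w)$, one has $\rst_\Gamma(n_0)' = \prod_{w\in\Omega(n_0)} \rst_\Gamma(w)' \leq N$. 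For the record, the paper itself quotes this theorem from Grigorchuk \cite{grigor_bg} without giving a proof.
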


\begin{prop}\label{prop_quotientSoluble}
Every proper quotient of $G$ is soluble.
\end{prop}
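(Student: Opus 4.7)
The plan is to combine Grigorchuk's normal subgroup structure theorem (Theorem \ref{lemma_grigor}) with the identification $\rst_G(n)' = G^{(n+2)}$ from Proposition \ref{cor_rstdash_fg}. Since $G$ is a branch group (by Theorem \ref{thm_quotFinite}), it acts spherically transitively on the tree, so Grigorchuk's theorem applies.

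Concretely, I would start with an arbitrary non-trivial normal subgroup $1 \neq N \lhd G$ and invoke Theorem \ref{lemma_grigor} to obtain an integer $n$ such that $\rst_G(n)' \leq N$. Then I would apply Proposition \ref{cor_rstdash_fg} to rewrite this as $G^{(n+2)} \leq N$. From here the conclusion is essentially formal: $G/N$ is a quotient of $G/G^{(n+2)}$, and the latter is soluble of derived length at most $n+2$, so $G/N$ is soluble. Since $N$ was arbitrary (and the proper quotients correspond to $N \neq 1$), this gives the claim.

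I do not expect any serious obstacle: the heavy lifting has already been done in establishing (i) that $G$ is branch, and (ii) the precise identification $\rst_G(n)' = G^{(n+2)}$, which pins the derived length of the quotient by $\rst_G(n)'$ to exactly $n+2$. The only thing worth double-checking is that the hypothesis of Theorem \ref{lemma_grigor} is met, namely spherical transitivity of $G$ on $T$, which is part of the definition of a branch group as given in Definition (the $\rst_G(n)$ finite-index condition is also needed, and both are asserted in Theorem \ref{thm_quotFinite}). Thus the proof reduces to a short chain of citations.
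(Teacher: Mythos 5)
Your argument is correct and follows essentially the same route as the paper: invoke Theorem \ref{lemma_grigor} to place some $\rst_G(n)'$ inside the given normal subgroup, then identify $\rst_G(n)'$ with a term of the derived series. The paper cites Lemma \ref{lemma_Gder} (which gives $G^{(n+1)} \leq \rst_G(n)$, hence $G^{(n+2)} \leq \rst_G(n)'$) rather than the sharper equality $\rst_G(n)' = G^{(n+2)}$ of Proposition \ref{cor_rstdash_fg}, but this is an inessential difference.
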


\begin{proof}
This follows straight from Theorem \ref{lemma_grigor} and Lemma \ref{lemma_Gder}.
\end{proof}

\begin{theorem}
In the group $G$ defined above every normal subgroup is finitely generated.
\end{theorem}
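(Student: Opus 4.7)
The plan is to combine Grigorchuk's theorem (Theorem \ref{lemma_grigor}) with Proposition \ref{cor_rstdash_fg} and the branch group property established in Theorem \ref{thm_quotFinite}. Let $1 \neq N \lhd G$. Since $G$ is branch, and in particular spherically transitive, Theorem \ref{lemma_grigor} produces an integer $n \geq 1$ with $\rst_G(n)' \leq N$. By Proposition \ref{cor_rstdash_fg}, the subgroup $\rst_G(n)'$ is itself finitely generated, so it suffices to show that the quotient $N/\rst_G(n)'$ is finitely generated; combining a finite generating set for $N/\rst_G(n)'$ with one for $\rst_G(n)'$ then produces a finite generating set for $N$.

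To control $N/\rst_G(n)'$ I would pass to the overgroup $G/\rst_G(n)'$ and show it is Noetherian (every subgroup finitely generated). The key observation is that $\rst_G(n)/\rst_G(n)'$ sits inside $G/\rst_G(n)'$ as an abelian normal subgroup of finite index: finiteness of the index is the branch property, and the subgroup is abelian by construction. Because $G = \langle a_0, b\rangle$ is finitely generated and $\rst_G(n)$ has finite index, $\rst_G(n)$ is itself finitely generated (Reidemeister--Schreier), so its abelianisation $\rst_G(n)/\rst_G(n)'$ is a finitely generated abelian group. Hence $G/\rst_G(n)'$ is virtually finitely generated abelian, and therefore polycyclic. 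Every subgroup of a polycyclic group is finitely generated, so in particular $N/\rst_G(n)'$ is, completing the proof.

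The main obstacle is really already absorbed into Proposition \ref{cor_rstdash_fg}: without the identification $\rst_G(n)' = G^{(n+2)} = N_{n+1}\times\cdots\times N_{n+1}$ and the finite generation of $N_{n+1}$ (Lemma \ref{lemma_Nfg}), the ``floor'' $\rst_G(n)'$ we are throwing away would not itself be finitely generated and the argument would collapse. Everything above this floor is handled cleanly by the polycyclic-quotient observation; the only subtle point to verify in writing up is that $G/\rst_G(n)'$ is virtually \emph{finitely generated} abelian, rather than merely finite-by-abelian in the abstract, which is precisely where finite generation of $G$ together with finite index of $\rst_G(n)$ is used.
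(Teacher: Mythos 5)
Your proof is correct and follows essentially the same route as the paper: apply Grigorchuk's theorem to find $\rst_G(n)'\leq N$, use Proposition~\ref{cor_rstdash_fg} for finite generation of $\rst_G(n)'$, and then exploit that $\rst_G(n)/\rst_G(n)'$ is a finitely generated abelian group of finite index to conclude $N/\rst_G(n)'$ is finitely generated. (One small wording point: a group that is (f.g.\ abelian)-by-finite is polycyclic-\emph{by-finite}, not necessarily polycyclic, but such groups are still Noetherian, which is all you need.)
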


\begin{proof}
By Lemma \ref{lemma_grigor} every normal subgroup $K \lhd G$ contains some $\rst_G'(n)$. Corollary \ref{cor_rstdash_fg}
states that $\rst_G(n)'$ is finitely generated. So it suffices to show that $K/\rst_G(n)'$ is finitely generated. The
group $K/\rst_G(n)'$ is a finite extension of the finitely generated abelian group $\left(K\cap
\rst_G(n)\right)/\rst_G'(n)$.
\end{proof}

\subsection{Congruence Subgroup Property}

We recall that a branch group $\Gamma$ has the \emph{congruence subgroup property} if for every subgroup $H \leq \Gamma$
of finite index in $\Gamma$ there exists an $n$ such that $\St_\Gamma(n) \leq H$.

\begin{theorem}
$G$ does not have the congruence subgroup property.
\end{theorem}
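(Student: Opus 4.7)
The plan is to exhibit a finite-index subgroup of $G$ that contains no level stabiliser $\St_G(n)$, by projecting $G$ onto its abelianisation.

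First I would determine $G^{\mathrm{ab}} = G/G' = G/N$. By Proposition \ref{prop_spinal}(a), $G = B \rtimes A$ with $A \cong \mathbb{Z}/l_0$, and Lemma \ref{lemma_cyclic} gives $B/N \cong \mathbb{Z}$. Since $A = \langle a_0 \rangle$ acts on $B$ by the rule $b(i)^{a_0} = b(i+1)$ and all the $b(i)$ map to the same generator of $B/N$, the induced action of $A$ on $B/N$ is trivial. Hence $G^{\mathrm{ab}} \cong \mathbb{Z} \oplus \mathbb{Z}/l_0$, and there is a well-defined homomorphism $\varphi \colon G \twoheadrightarrow \mathbb{Z}$ determined by $\varphi(b) = 1$ and $\varphi(a_0) = 0$.

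Next I would choose any prime $p \notin \{l_0, l_1, l_2, \dots\}$ — for instance $p = 2$, since every $l_i \geq 7$ — and let $H$ be the kernel of $G \xrightarrow{\varphi} \mathbb{Z} \twoheadrightarrow \mathbb{Z}/p\mathbb{Z}$, a subgroup of index $p$ in $G$. The remaining task is to show $\St_G(n) \not\leq H$ for every $n \geq 1$, i.e., to exhibit in each $\St_G(n)$ an element whose $\varphi$-value is not divisible by $p$. A short induction on $n$ using the recursion $b_n = (b_{n+1}, a_{n+1}, 1, \ldots, 1)_n$ together with $a_n^{l_n} = 1$ (this is essentially the content of Lemma \ref{eq_b_powers}) shows that $b^{l_1 l_2 \cdots l_{n-1}} \in \St_G(n)$, the empty product for $n = 1$ recovering $b \in \St_G(1) = B$. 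Applying $\varphi$ gives the integer $l_1 l_2 \cdots l_{n-1}$, which is nonzero modulo $p$ by the choice of $p$. Therefore $\St_G(n) \not\leq H$ for every $n$, and $G$ fails to have the congruence subgroup property.

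The only mildly subtle point is the abelianisation computation: one must recognise that the cyclic group $A$ acts trivially on $B^{\mathrm{ab}}$, which follows because $A$ merely permutes the standard generators $b(1),\dots,b(l_0)$ of $B$ and these all coincide in $B/N$. Once this is in place, the rest is bookkeeping with lemmas already established, and the freedom to pick $p$ outside the sequence of primes $\{l_i\}$ is what makes the argument work uniformly in $n$.
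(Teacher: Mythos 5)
Your proof is correct and follows a genuinely different route from the paper. The paper argues via the rigid stabilizers: for a prime $p$, the quotient $\rst_G(n)/\rst_G(n)'\rst_G(n)^p$ is an elementary abelian $p$-section of rank $m_{n+1}$ (using Theorem \ref{thm_rstAB}, that $\rst_G(n)^{\mathrm{ab}}\cong \prod_{i=1}^{m_{n+1}}\mathbb{Z}$), while any congruence quotient $G/H$ is a quotient of the finite wreath product $A_{k-1}\wr\cdots\wr A_0$ and therefore has $p$-rank controlled by the sequence $\{l_i\}$; choosing $p$ and $n$ appropriately separates the two. Your argument instead works directly with $G^{\mathrm{ab}}$: you produce a surjection $\varphi\colon G\to\mathbb{Z}$ with $\varphi(b)=1$, $\varphi(a)=0$, pick a prime $p$ outside $\{l_i\}$ (such as $p=2$), take $H=\ker(G\to\mathbb{Z}/p)$, and then show each $\St_G(n)$ contains the explicit element $b^{l_1\cdots l_{n-1}}$ whose $\varphi$-value $l_1\cdots l_{n-1}$ is nonzero mod $p$. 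Both proofs ultimately exploit the same phenomenon — that congruence quotients are $\{l_0,\dots,l_{k-1}\}$-groups for some $k$ and so avoid $p$ — but yours uses much lighter machinery (only Proposition \ref{prop_gnDash} that $G'=N$, Lemma \ref{lemma_cyclic} that $B/N\cong\mathbb{Z}$, and the recursion for $b$) and produces a single explicit index-$p$ non-congruence subgroup, whereas the paper's approach relies on the full computation of $\rst_G(n)^{\mathrm{ab}}$ and a growing-rank argument. One small point worth flagging: you should cite Proposition \ref{prop_gnDash} explicitly when asserting $G'=N$ at the outset, as your argument depends on it; the rest (triviality of the $A$-action on $B/N$ because all $b(i)$ coincide mod $N$, and the induction giving $b^{l_1\cdots l_{n-1}}\in\St_G(n)$) is verified correctly.
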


\begin{proof}
The quotient \[\frac{\rst_G(n)}{\rst_G(n)' \rst_G(n)^p}\] is an elementary abelian $p$-section for every prime $p$. By
taking
$n$ large enough we can find $p$-sections of arbitrarily large rank in $G$. Because $G$ is a branch group $\rst_G(n)$
has
finite index. On the other hand any congruence quotient $G/H$ is a quotient of $A_{k-1} \wr \dots
\wr A$ for some $k \in \mathbb{N}$. Hence its $p$-rank is finite and determined by the sequence of primes we chose.
\end{proof}

This implies that the profinite completion maps onto the congruence completion with non-trivial congruence kernel.

\begin{theorem}
The rank of $\frac{\St_G(n+1)}{\rst_G(n)}$ is less than or equal to $m_{n+1}= \prod_{i=0}^{n} l_i$ for $n \geq 0$.
\end{theorem}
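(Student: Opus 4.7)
The plan is to interpose the subgroup $G^{(n+1)}$ between $\rst_G(n)$ and $\St_G(n+1)$ and use its explicit product decomposition to embed the intermediate quotient into a free abelian group of rank $m_n$. By Lemma~\ref{lemma_rst_inStG}(\ref{lemma_rst_inStGPart3}) we already have $\rst_G(n) \leq \St_G(n+1)$, so the quotient is defined; moreover Theorem~\ref{thm_quotFinite} shows it is abelian, being a subgroup of the abelian group $\St_G(n)/\rst_G(n)$.

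First I would observe that $\St_G(n+1) \leq B_n \times \cdots \times B_n$ with $m_n$ factors. This follows from the inclusion $\St_G(n) = G \cap (G_n \times \cdots \times G_n)$ together with the analogue of Proposition~\ref{prop_spinal}(c) applied to $G_n$, which identifies $\St_{G_n}(1)$ with $B_n$: an element of $\St_G(n+1)$, restricted to any subtree rooted at a vertex of level $n$, must stabilize the first level of that subtree and hence lies in $B_n$. Next, by Lemma~\ref{lemma_prod}(\ref{part_3}) we have $G^{(n+1)} = N_n \times \cdots \times N_n$ with $m_n$ factors, and this decomposition agrees coordinate by coordinate with the embedding $\St_G(n+1) \hookrightarrow B_n^{m_n}$. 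Passing to the quotient therefore yields an injection
\[ \St_G(n+1)/G^{(n+1)} \hookrightarrow (B_n/N_n)^{m_n} \cong \mathbb{Z}^{m_n}, \]
where the last isomorphism is Lemma~\ref{lemma_cyclic}.

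Finally, Lemma~\ref{lemma_prod}(\ref{part_4}) gives $G^{(n+1)} \leq \rst_G(n)$, so $\St_G(n+1)/\rst_G(n)$ is a quotient of the free abelian group $\St_G(n+1)/G^{(n+1)}$, and a quotient of a subgroup of $\mathbb{Z}^{m_n}$ needs at most $m_n \leq m_{n+1}$ generators. The only point needing real care is verifying the coordinate-wise compatibility of the two product decompositions of $B_n^{m_n}$ and $N_n^{m_n}$ inside $G$, so that the induced map is genuinely well-defined and injective; once this is in hand the rank bound is immediate, and the argument in fact gives the slightly sharper value $m_n$.
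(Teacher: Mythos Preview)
Your argument is correct and follows essentially the same route as the paper: both use the inclusions $\St_G(n+1)\le \prod_{i=1}^{m_n} B_n$ and $G^{(n+1)}=\prod_{i=1}^{m_n} N_n \le \rst_G(n)$ to exhibit $\St_G(n+1)/\rst_G(n)$ as a section of $\prod_{i=1}^{m_n} B_n/N_n\cong \mathbb{Z}^{m_n}$ via Lemma~\ref{lemma_cyclic}. Your write-up is simply more explicit about the intermediate quotient by $G^{(n+1)}$ and correctly observes that the argument in fact yields the sharper bound $m_n$.
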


\begin{proof}
The inclusions $\St_G(n+1) \leq \prod_{i=1}^{m_n} B_n$ and $N_n\times \dots \times
N_n=G^{(n+1)} \leq \rst_G(n)$ give that the quotient $\frac{\St_G(n+1)}{\rst_G(n)}$ is a section of $\frac{B_n
\times \dots \times B_n}{N_n \times \dots \times N_n}$. Hence the first quotient has rank less than or equal to
$m_{n+1}$
by Lemma \ref{lemma_cyclic}.
\end{proof}

\section{Abelianization}

This section is devoted to computing the abelianization $G^{ab} = G/G'$ of $G$ where $G'$ is the derived group. This
will allow us to determine the abelianizations of the $n$-th level rigid stabilizers $\rst_G(n)$. Considering those we
show
that the virtual first Betti number of $G$ is infinite.

\subsection{Abelianization of $G$}

The abelianization of $G$ as a $2$-generator group must be an image of the free abelian
group $F_2^{ab}=\left<x_1,x_2\right>$ on two generators, in particular an image of $F_2^{ab}=C_\infty \times
C_\infty$.

\begin{theorem}\label{thm_ab_cyclic}
$G^{ab}=C_{l_0} \times C_\infty$.
\end{theorem}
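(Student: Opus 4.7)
The plan is to exploit the semidirect product decomposition $G = B \rtimes A$ from Proposition \ref{prop_spinal}(a). Let $\alpha$ and $\beta$ denote the images in $G^{ab}$ of the generators $a_0$ and $b$; these generate $G^{ab}$. Since $G/B \simeq A \simeq C_{l_0}$ is already abelian we have $G' \leq B$, so the abelianization fits into the short exact sequence
\[
1 \longrightarrow B/G' \longrightarrow G^{ab} \longrightarrow G/B \longrightarrow 1.
\]
The right-hand quotient $G/B$ is cyclic of order $l_0$, generated by the image of $\alpha$, while the left-hand subgroup is $B/G' = B/N$ by Proposition \ref{prop_gnDash} at $n = 0$, which is infinite cyclic by Lemma \ref{lemma_cyclic} at $n = 0$ and is generated by $\beta$ (since every $b(i)$ has the same image in $B/N$ as $b = b(1)$).

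To split this extension it is enough to show that $\alpha$ has order exactly $l_0$ in $G^{ab}$. We have $\alpha^{l_0} = 1$ because $a_0^{l_0} = 1$ already holds in $G$, and $\alpha$ cannot have strictly smaller order because it maps to a generator of $G/B \simeq C_{l_0}$. Hence $\langle \alpha \rangle \simeq C_{l_0}$ is a finite subgroup of $G^{ab}$, and since $\langle \beta \rangle \simeq C_\infty$ is torsion-free, $\langle \alpha \rangle \cap \langle \beta \rangle = 1$. As $\alpha$ and $\beta$ together generate $G^{ab}$, this forces
\[
G^{ab} \;=\; \langle \alpha \rangle \times \langle \beta \rangle \;\simeq\; C_{l_0} \times C_\infty.
\]

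There is no genuine obstacle here: the substantive work — the identifications $N = G'$ and $B/N \simeq \mathbb{Z}$ — has already been carried out in Proposition \ref{prop_gnDash} and Lemma \ref{lemma_cyclic}. The only small point to record is that $\alpha$ really does generate $G/B$, which is immediate from $G = B \rtimes A$ with $A = \langle a_0 \rangle$. The splitting argument then reduces the theorem to noting that torsion in an abelian group cannot sit inside a free cyclic subgroup.
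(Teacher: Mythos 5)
Your proof is correct as a deduction from the paper's earlier results, but it takes a genuinely different route from the paper's own argument. The paper proves the theorem directly: it notes that the image of $a$ in $G^{ab}$ has order $l_0$, and then shows that the image of $b$ has infinite order by composing $G\twoheadrightarrow G^{ab}$ with the projections onto the abelianizations of the finite wreath products $W(n)=A_{l_{n-1}}\wr\dots\wr A_{l_0}=G/\St_G(n)$ from Corollary \ref{cor_GmodStab}, observing that the order of the image of $b$ in $W(n)^{ab}=A_{l_{n-1}}\times\dots\times A_{l_0}$ grows without bound in $n$. You instead assemble the structural facts $G=B\rtimes A$, $G'=N$, and $B/N\simeq\mathbb{Z}$ into a short exact sequence $1\to B/G'\to G^{ab}\to G/B\to 1$ and then split it; the splitting step and the ``torsion meets torsion-free trivially'' argument are both correct.

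The thing to be aware of is that your approach does not avoid the hard content of the theorem so much as shift it into Lemma \ref{lemma_cyclic}. As written, the paper's proof of that lemma only exhibits $B/N$ as the image of $F_{l_0}/K\simeq\mathbb{Z}$ under the map induced by $\pi$, which shows $B/N$ is cyclic but not by itself that it is \emph{infinite} cyclic: one must still rule out $\ker\pi\not\leq K$. Ruling that out amounts to producing a surjection $B\to\mathbb{Z}$ sending each $b(i)$ to $1$, and the natural way to do that is precisely the wreath-product projection argument the paper runs inside the proof of Theorem \ref{thm_ab_cyclic}. So your proof is a valid deduction from the paper's stated results, and it is nicely modular, but it quietly imports the ``infinite order'' computation via the cited lemma rather than replacing it. If you want a self-contained alternative, you would need to supply that projection argument (or an equivalent) when invoking Lemma \ref{lemma_cyclic}.
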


\begin{proof}
The abelianization can be presented as $G^{ab}=\left<a,b | a^{e_i}b^{d_i}=1\right>$ for possibly infinitely
many pairs of exponents $e_i, d_i
\in \mathbb{Z}$. By
construction the order of $a$ is $o(a)=l_0$. We
now show that the image of $b$ has infinite order in the abelianization. Corollary
\ref{cor_GmodStab} describes the quotients \[\frac{G}{\St_G(n)} =
A_{l_{n-1}} \wr \cdots \wr A_{l_0} =: W(n).\] Consider the natural projections \[\varphi: G \twoheadrightarrow
\frac{G}{G'} \twoheadrightarrow \frac{W(n)}{W'(n)} = A_{l_{n-1}} \times \dots \times A_{l_0}.\]
The image of $b$ under the composite of these has order $o(\varphi(b)) = \prod_{i=0}^{n-1} l_i$. This order tends to
infinity with $n$ and must therefore be infinite in $G^{ab}$.
\end{proof}

\begin{cor}
$G_n^{ab} = C_{l_n} \times C_\infty$ for $n \geq 1$.
\end{cor}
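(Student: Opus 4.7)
The plan is to reduce to structural facts already established, avoiding the projection-to-wreath-product calculation used in Theorem~\ref{thm_ab_cyclic}. First, I would identify the abelianization explicitly: by Proposition~\ref{prop_gnDash} we have $G_n' = N_n$, so $G_n^{ab} = G_n/N_n$. Second, I would invoke the $G_n$-analogue of Proposition~\ref{prop_spinal}(a), namely $G_n = B_n \rtimes A_n$; this transfers verbatim since $B_n \leq \St_{G_n}(1)$ meets $A_n$ trivially and $B_n$ is normal in $G_n$ as the normal closure of $b_n$ under $\langle a_n \rangle$. Because $N_n \trianglelefteq G_n$ is contained in $B_n$, this descends to a short exact sequence
\[ 1 \longrightarrow B_n/N_n \longrightarrow G_n/N_n \longrightarrow A_n \longrightarrow 1. \]

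Third, I would check that this extension is a trivial direct product. The conjugation action of $A_n$ on $B_n/N_n$ is trivial because $a_n$ cyclically permutes the generators $b_n(1),\ldots,b_n(l_n)$ of $B_n$, while the defining relations of $N_n$ identify all of these modulo $N_n$. Combining this observation with Lemma~\ref{lemma_cyclic}'s isomorphism $B_n/N_n \cong C_\infty$ yields
\[ G_n^{ab} \;=\; G_n/N_n \;\cong\; (B_n/N_n) \times A_n \;\cong\; C_\infty \times C_{l_n}, \]
as required.

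The only real obstacle is bookkeeping: transferring Proposition~\ref{prop_spinal}(a) and the cyclic-permutation observation from $G = G_0$ to each $G_n$. Both are routine consequences of the recursive self-similarity of the construction, so no substantive new work is required; in particular, this approach sidesteps the more delicate task of computing the exact orders of the images of $a_n$ and $b_n$ in the wreath-product quotients $G_n/\St_{G_n}(k)^{ab}$.
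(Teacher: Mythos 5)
Your argument is correct, but it takes a genuinely different route from the paper's. The paper deduces this corollary by applying the proof of Theorem~\ref{thm_ab_cyclic} to $G_n$ in place of $G$: one projects onto the finite wreath-product abelianizations $A_{n+k-1}\times\dots\times A_n$ and observes that the image of $b_n$ has unbounded order, which forces $b_n$ to have infinite order in $G_n^{ab}$ while $a_n$ has order exactly $l_n$. You instead work entirely with the explicit description of $G_n'$. Using Proposition~\ref{prop_gnDash} to write $G_n^{ab}=G_n/N_n$, the descended splitting $G_n/N_n=(B_n/N_n)\rtimes \bar A_n$ coming from $G_n=B_n\rtimes A_n$, the triviality of the induced $A_n$-action on $B_n/N_n$ (since $a_n$ cyclically permutes the $b_n(i)$ and $N_n$ contains all $b_n(i)^{-1}b_n(j)$), and Lemma~\ref{lemma_cyclic}, you obtain the direct product $C_\infty\times C_{l_n}$ without touching the wreath-product quotients. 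This is tidier and more structural: it shows that the free-$\mathbb{Z}$-factor is literally $B_n/N_n$ rather than being detected only asymptotically, at the cost of invoking Lemma~\ref{lemma_cyclic}, whose proof itself relies on the free-group augmentation map.

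One point you should make explicit: triviality of the conjugation action alone is not enough to conclude a direct product, since a central extension of $C_{l_n}$ by $C_\infty$ need not split (e.g.\ $C_\infty$ itself sits in such an extension). What saves you is that the semidirect decomposition $G_n=B_n\rtimes A_n$ descends modulo $N_n$: because $N_n\leq B_n$ and $A_n\cap B_n=1$, one has $A_n\cap N_n=1$, so $\bar A_n=A_nN_n/N_n\cong A_n$ is a genuine complement to $B_n/N_n$ in $G_n/N_n$. Stating this descent, rather than merely noting the trivial action on the normal subgroup, closes the argument cleanly.
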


\subsection{Abelianization of Subgroups}

In this subsection we determine the abelianization of the subgroups $B$ and $\rst_G(n)$. This will yield that $G$ is not
just infinite.

\begin{prop}\label{prop_Bab}
$B^{ab}\simeq \prod_{i=1}^{l_0} \mathbb{Z}$.
\end{prop}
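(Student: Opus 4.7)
The plan is a two-sided rank sandwich: show that $B^{ab}$ is a quotient of $\mathbb{Z}^{l_0}$ and also surjects onto $\mathbb{Z}^{l_0}$. These two facts together force $B^{ab}\simeq\mathbb{Z}^{l_0}$.

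For the first side, I observe that $B=\St_G(1)$ is generated by the $l_0$ elements $b(1),\ldots,b(l_0)$ (since $A=A_0\cong C_{l_0}$, these exhaust the $a$-conjugates of $b$, and $B$ is normal in $G$ with $G=\langle a,b\rangle$). Consequently $B^{ab}$ is a quotient of $\mathbb{Z}^{l_0}$ and hence a finitely generated abelian group of $\mathbb{Z}$-rank at most $l_0$.

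For the second side, I would construct a surjection $\Phi\colon B\twoheadrightarrow\mathbb{Z}^{l_0}$ as the composition
\[
B\ \hookrightarrow\ G_1\times\cdots\times G_1\ \twoheadrightarrow\ (G_1^{ab})^{l_0}\ \twoheadrightarrow\ C_\infty^{l_0}\ \simeq\ \mathbb{Z}^{l_0},
\]
where the first inclusion is Proposition \ref{prop_spinal}(b), the second is coordinatewise abelianization (using the corollary to Theorem \ref{thm_ab_cyclic} that $G_1^{ab}=C_{l_1}\times C_\infty$), and the last kills the torsion factor $C_{l_1}$ in each coordinate. The key observation is that each $b(i)$ is a tuple with $b_1$ in one coordinate, $a_1$ in the cyclically adjacent coordinate, and $1$'s elsewhere: the image of $a_1$ in $C_\infty$ is zero (it has order $l_1$), while $b_1$ maps to a generator of $C_\infty$. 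Thus $\Phi(b(i))$ is a standard basis vector $e_{\pi(i)}\in\mathbb{Z}^{l_0}$ for some permutation $\pi$ of indices induced by the cyclic shift, which shows that $\Phi$ is surjective. Because the target is abelian, $\Phi$ factors through a surjection $B^{ab}\twoheadrightarrow\mathbb{Z}^{l_0}$, so $B^{ab}$ has $\mathbb{Z}$-rank at least $l_0$.

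Combining the two bounds, $B^{ab}$ is a quotient $\mathbb{Z}^{l_0}/K$ of $\mathbb{Z}$-rank exactly $l_0$. This forces $K$ to have $\mathbb{Z}$-rank zero; but $K\leq\mathbb{Z}^{l_0}$ is torsion-free, so $K=0$ and $B^{ab}\simeq\mathbb{Z}^{l_0}$. The only real obstacle in any of this is the coordinate bookkeeping that identifies the image of each $b(i)$ under $\Phi$ with a basis vector; everything else is a direct consequence of results already established.
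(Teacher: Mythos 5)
Your proof is correct, and the overall ``rank sandwich'' framework (upper bound from $l_0$ generators, lower bound from an explicit rank-$l_0$ witness, then conclude torsion-freeness) is exactly the paper's strategy. Where you diverge is in how the lower bound is realised. The paper takes the elements $b(i)^{l_1}$, observes via Lemma \ref{lemma_comm_inrst} that they lie in $\rst_G(1)$ with disjoint supports and hence generate a free abelian subgroup $H\leq B$ of rank $l_0$, and then argues $H\cap B'=1$ so that $H$ embeds into $B^{ab}$; this gives the lower bound on the rank by exhibiting a free abelian \emph{subgroup}. You instead exhibit a rank-$l_0$ \emph{quotient} of $B^{ab}$: compose the embedding $B=\St_G(1)\hookrightarrow G_1\times\cdots\times G_1$ from Proposition \ref{prop_spinal}(b) with coordinatewise abelianization $G_1^{ab}=C_{l_1}\times C_\infty$ and projection onto the torsion-free part, and check that the $l_0$ generators $b(i)$ hit the $l_0$ standard basis vectors of $\mathbb{Z}^{l_0}$. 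These are dual in spirit and of comparable length, but your route sidesteps the step $H\cap B'=1$ (which in the paper is asserted rather tersely), and it relies only on the coordinate decomposition of $b(i)$ and the already-established structure of $G_1^{ab}$, not on the $\rst_G(1)$ machinery. One small thing worth saying explicitly: the projection $G_1^{ab}\twoheadrightarrow C_\infty$ kills the class of $a_1$ because $a_1$ is torsion, and $b_1$ then necessarily maps to a generator since the two images together generate the cyclic quotient; you do note this, and it is the only coordinate bookkeeping that needs care.
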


\begin{proof}
The elements $b(i)^{l_1}$ are all in $B$. The image of each $b(i)$ in $G^{ab}$ has infinite order by the
proof of Theorem \ref{thm_ab_cyclic}. The subgroup $H=\left< b(1)^{l_1}, \dots, b\left(l_0\right)^{l_1}\right> \leq B$
is
therefore free abelian of rank $l_0$, hence $H \cap B'=1$. We get that \[H \simeq \frac{H}{H \cap B'} \simeq \frac{H
B'}{B'} \leq \frac{B}{B'}\] and hence $B^{ab}$ has rank at least $l_0$. But $B$ is generated by $l_0$ elements and so
$B^{ab} \simeq \prod_{i=1}^{l_0} \mathbb{Z}$.

\end{proof}

\begin{cor}\label{cor_BnAB}
$B_n^{ab} \simeq \prod_{i=1}^{l_n} \mathbb{Z}$ for $n \geq 1$.
\end{cor}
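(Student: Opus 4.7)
The plan is to carry the proof of Proposition \ref{prop_Bab} over to the self-similar copy $n$ levels down: the pair $(G_n, B_n) \leq \Aut(T_n)$ sits in exactly the same structural relationship as $(G, B) \leq \Aut(T)$, with the sequence of primes shifted by $n$. Every argument used to control $(G, B)$ has a direct analog for $(G_n, B_n)$, and these analogs are what I would invoke.

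First I would use the preceding corollary to conclude that the image of $b_n$ in $G_n^{ab} \simeq C_{l_n} \times C_\infty$ has infinite order. Since each $b_n(i)$ is an $A_n$-conjugate of $b_n$, its image in $G_n^{ab}$ is likewise of infinite order, and the same then holds for the power $b_n(i)^{l_{n+1}}$. Next I would reproduce the computation of Lemma \ref{lemma_comm_inrst} one level down to obtain
\[ b_n(i)^{l_{n+1}} = (1, \dots, 1, \underbrace{b_{n+1}^{l_{n+1}}}_{i}, 1, \dots, 1)_1, \]
using $|a_{n+1}| = l_{n+1}$ to kill the $(i{+}1)$-th coordinate. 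Elements with different indices $i$ have disjoint support on level $1$ of $T_n$ and hence commute, so $H_n := \langle b_n(1)^{l_{n+1}}, \dots, b_n(l_n)^{l_{n+1}} \rangle \leq B_n$ is abelian; combined with the preceding observation, $H_n$ is free abelian of rank $l_n$.

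Since $H_n$ is abelian, $H_n \cap B_n' = 1$, so $H_n$ injects into $B_n^{ab}$ and yields the lower bound $\operatorname{rank}(B_n^{ab}) \geq l_n$. The matching upper bound is immediate: $B_n$ is generated by the $l_n$ elements $b_n(1), \dots, b_n(l_n)$, so $B_n^{ab}$ is a quotient of $\mathbb{Z}^{l_n}$. Combining the two bounds yields $B_n^{ab} \simeq \prod_{i=1}^{l_n} \mathbb{Z}$. I anticipate no real obstacle; the only delicate point is to ensure that the $n$-shifted analogs of each earlier ingredient (the decomposition of $b_n$ and the description of $G_n^{ab}$) are actually available when cited, which is precisely the content of the preceding corollary.
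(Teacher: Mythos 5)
Your proposal matches the paper's intent exactly: the corollary has no written proof in the paper precisely because it is the $n$-shifted version of Proposition \ref{prop_Bab}, and you carry over every ingredient correctly (the corollary $G_n^{ab}=C_{l_n}\times C_\infty$ in place of Theorem \ref{thm_ab_cyclic}, the power $b_n(i)^{l_{n+1}}$ in place of $b(i)^{l_1}$, disjoint supports on level $1$ of $T_n$, and the upper bound from the $l_n$-element generating set). One point of caution, which is inherited from the paper's own informal wording: the inference ``since $H_n$ is abelian, $H_n\cap B_n'=1$'' is not a valid deduction as stated --- abelianness of a subgroup says nothing about its intersection with a derived subgroup. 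What actually justifies $H_n\cap B_n'=1$ is Corollary \ref{cor_BnDerived}, which gives $B_n' = N_{n+1}\times\dots\times N_{n+1}$, together with Lemma \ref{lemma_cyclic}: an element of $H_n$ has $i$-th coordinate $b_{n+1}^{l_{n+1}k_i}$, and this lies in $N_{n+1}$ only if $k_i=0$ because $b_{n+1}$ maps to a generator of $B_{n+1}/N_{n+1}\simeq\mathbb{Z}$. The paper's proof of Proposition \ref{prop_Bab} glosses over this in the same way (``free abelian of rank $l_0$, hence $H\cap B'=1$''), so you are reproducing the argument faithfully, but you should be aware that the justification of this step genuinely uses the structural description of $B_n'$, not merely the abelianness or the rank of $H_n$.
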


\begin{theorem}\label{thm_rstAB}
$\rst_G(n)^{ab}=\prod_{i=1}^{m_{n+1}} \mathbb{Z}$ for $n \geq 1$.
\end{theorem}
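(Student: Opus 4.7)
The strategy is to exploit the direct product decomposition $\rst_G(n) = \prod_{v \in \Omega(n)} \rst_G(v)$, which gives $\rst_G(n)^{ab} = \prod_v \rst_G(v)^{ab}$. Since $|\Omega(n)| = m_n$ and $m_n \cdot l_n = m_{n+1}$, and the rigid vertex stabilizers at level $n$ are mutually $G$-conjugate (hence isomorphic), it suffices to establish $\rst_G(v)^{ab} \simeq \mathbb{Z}^{l_n}$ for a single $v \in \Omega(n)$.

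For the upper bound, Lemma \ref{lemma_rst_inStG}(c) yields $\rst_G(n) \leq \St_G(n+1)$, and the level-stabilizer decomposition $\St_G(n+1) \leq B_n \times \cdots \times B_n$ (with $m_n$ factors, obtained by requiring the component at each level-$n$ vertex to further fix level $1$ of $T_n$) shows that each $\rst_G(v)$ sits inside $B_n$. Combining Proposition \ref{cor_rstdash_fg} with Lemma \ref{lemma_prod}(c) identifies $\rst_G(n)' = G^{(n+2)} = \prod_{u \in \Omega(n+1)} N_{n+1}$; regrouping the $m_{n+1}$ factors by their level-$n$ parent and using the identity $B_n' = N_{n+1}^{l_n}$ from Corollary \ref{cor_BnDerived} produces the pointwise equality $\rst_G(v)' = B_n'$. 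Hence $\rst_G(v)^{ab} = \rst_G(v)/B_n'$ embeds into $B_n^{ab} \simeq \mathbb{Z}^{l_n}$ by Corollary \ref{cor_BnAB}, and is therefore free abelian of rank at most $l_n$.

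For the matching lower bound, I would exhibit $l_n$ elements of $\rst_G(v)$ whose images in $B_n^{ab}$ span a full-rank sublattice. By Lemma \ref{lemma_Gder}(d) together with Lemma \ref{lemma_prod}(c), $G^{(n+1)} = \prod_{u \in \Omega(n)} N_n \leq \rst_G(n)$, so the copy of $N_n$ at position $v$ lies inside $\rst_G(v)$. With respect to the basis of $B_n^{ab}$ formed by the images $e_1, \ldots, e_{l_n}$ of $b_n(1), \ldots, b_n(l_n)$, the image of $N_n = K(b_n(1), \ldots, b_n(l_n))$ is precisely the sum-zero sublattice, which has rank $l_n - 1$. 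Lemma \ref{eq_b_powers} supplies $b^{m_{n+1}} \in \rst_G(v_1)$, acting as $b_n^{m_{n+1}}$ on the subtree at $v_1$ and thus mapping to $m_{n+1} e_1 \in B_n^{ab}$; conjugating by a $G$-element carrying $v_1$ to $v$ transports this into $\rst_G(v)$ as an element whose image still has nonzero coordinate sum. Adjoining such an element to the sum-zero sublattice produces a rank-$l_n$ sublattice, so $\rst_G(v)^{ab}$ has rank exactly $l_n$ and equals $\mathbb{Z}^{l_n}$.

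The main technical step is the pointwise identification $\rst_G(v)' = B_n'$, which requires carefully aligning the two natural direct product decompositions of $\rst_G(n)'$: one indexed by $\Omega(n)$ coming from the rigid stabilizer structure, the other indexed by $\Omega(n+1)$ coming from Proposition \ref{cor_rstdash_fg}. Once this bookkeeping is in place the remainder of the argument reduces to a clean linear computation inside $\mathbb{Z}^{l_n}$.
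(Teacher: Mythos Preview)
Your argument is correct and follows essentially the same route as the paper: both establish the upper bound by embedding $\rst_G(n)^{ab}$ into $\prod B_n^{ab}\simeq\mathbb{Z}^{m_{n+1}}$ via the identification $\rst_G(n)'=\prod B_n'$ coming from Proposition~\ref{cor_rstdash_fg} and Corollary~\ref{cor_BnDerived}, and both realise the lower bound by producing enough independent elements inside $\rst_G(n)$. The only genuine difference is in how the lower bound is populated: the paper simply takes, at each level-$n$ vertex, the $l_n$ elements $b_n(i)^{m_{n+1}}$ (which have pairwise disjoint support on level $n+1$ and hence map to a basis of a full-rank sublattice), whereas you first use the copy of $N_n$ at each vertex to get the sum-zero sublattice of rank $l_n-1$ and then adjoin a single conjugate of $b^{m_{n+1}}$ to reach rank $l_n$. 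Your variant requires the extra (true) observation that the coordinate-sum map $B_n\to\mathbb{Z}$ is $G_n$-conjugation invariant, while the paper's variant avoids this but needs to produce all $l_n$ powers at each vertex; neither is materially harder than the other, and the overall architecture is the same.
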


\begin{proof}
Proposition \ref{cor_rstdash_fg} gives the equality $\rst_G(n)' = G_{n+1}' \times \dots \times G_{n+1}'=N_{n+1} \times
\dots \times N_{i+1}$ and hence

\[\frac{\rst_G(n)}{\rst_G(n)'} = \frac{\rst_G(n)}{N_{n+1} \times \dots \times N_{n+1}} \leq \frac{\prod B_{n}}{\prod
B_n'} \simeq \prod_{i=1}^{m_{n} } \prod_{j=1}^{l_n} \mathbb{Z}.\]
It remains to prove that we have full rank $m_{n+1}= m_n \cdot l_n$. We observe that the elements $b_n(i)^{m_n}$ for
$i=1, \dots,
l_n$ all lie in $\rst_G(n)$ and have disjoint support. The subgroup\[\prod_{i=1}^{m_{n}} \left<b(1)^{m_{n+1}}, \dots,
b\left(l_n\right)^{m_{n+1}} \right>\] of $\rst_G(n)$ therefore maps onto a rank $m_{n+1}$ subgroup of $\rst_G(n)^{ab}$
which
proves the claim.
\end{proof}

\begin{cor}
$G$ is not just infinite.
\end{cor}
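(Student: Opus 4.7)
The plan is to exhibit a proper normal subgroup $M$ of $G$ with $G/M$ infinite, which is precisely the negation of being just infinite. Given that Theorem \ref{thm_rstAB} has just established that $\rst_G(n)^{ab}$ is free abelian of rank $m_{n+1}$, the natural candidate is $M = \rst_G(n)'$ for any fixed $n \geq 1$.

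I would first observe that $M$ is normal in $G$. By Theorem \ref{thm_quotFinite} the rigid level stabilizer $\rst_G(n)$ is normal in $G$ and of finite index; its derived subgroup is characteristic in it, hence $M \lhd G$. Then I would note that $\rst_G(n)/M = \rst_G(n)^{ab}$ embeds into $G/M$, and by Theorem \ref{thm_rstAB} this is free abelian of rank $m_{n+1} \geq 1$, in particular infinite. Combined with $[G:\rst_G(n)] < \infty$, this forces $G/M$ to be infinite as well. In particular $M \subsetneq G$, so $M$ is a proper normal subgroup with infinite quotient, which proves the corollary.

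I do not anticipate any real obstacle: Theorem \ref{thm_rstAB} supplies the substantive input, namely that $\rst_G(n)^{ab}$ is infinite, and the passage from there to a proper infinite quotient of $G$ is a purely formal matter of invoking that $\rst_G(n)$ has finite index and is normal in $G$. The only point worth double-checking is the normality of $\rst_G(n)$ itself, which comes from level-transitivity of $G$ permuting the factors in the direct product decomposition $\rst_G(n) = \rst_G(u_1) \times \cdots \times \rst_G(u_{m_n})$.
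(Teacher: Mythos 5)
Your argument is a valid, self-contained alternative to the paper's proof: the paper simply quotes Theorem~4 of Grigorchuk's article, which says a branch group is just infinite if and only if $\rst_G(n)^{ab}$ is finite for all $n$, and then applies Theorem~\ref{thm_rstAB}. You instead prove directly the one direction of that characterization actually needed, by exhibiting $M=\rst_G(n)'$ as a normal subgroup of infinite index. This buys you independence from the external reference at essentially no cost, and the mechanism (normality of $\rst_G(n)'$, $\rst_G(n)/M \cong \rst_G(n)^{ab}$ infinite, hence $G/M$ infinite) is correct. Incidentally, once you know $\rst_G(n)/M$ is an infinite subgroup of $G/M$, the appeal to $[G:\rst_G(n)]<\infty$ is superfluous.

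There is, however, one small but genuine gap. You frame the goal as finding a \emph{proper} normal subgroup with infinite quotient, but that is not the negation of just infinite: taking $M=1$ gives a proper normal subgroup with infinite quotient in any infinite group. What you need is a \emph{nontrivial} normal subgroup $M \neq 1$ with $G/M$ infinite, and you never verify $\rst_G(n)' \neq 1$. This is easy to supply: by Proposition~\ref{cor_rstdash_fg}, $\rst_G(n)' = N_{n+1}\times\dots\times N_{n+1}$, and $N_{n+1} = G_{n+1}'$ is nontrivial since $a_{n+1}$ and $b_{n+1}$ do not commute. With that line added, your proof is complete.
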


\begin{proof}
Theorem 4 in \cite{grigor_bg} states that $G$ is just infinite if and only if $\rst_G(n)^{ab}$ is finite for each $n$.
This together with Theorem \ref{thm_rstAB} proves the claim.
\end{proof}


We recall that the first Betti number $b_1(\Gamma)$ of a group $\Gamma$ is the dimension of $H_1(\Gamma;\mathbb{Z})
\otimes \mathbb{Q}$. This is the rank of $\Gamma^{ab}$. The virtual first Betti number of a group $\Gamma$ is defined
\cite{lackenby} to be 

\[vb_1(G) = \sup\{b_1(H): |G/H| < \infty \}. \]

\begin{cor}\label{cor_bettiNumber}
$vb_1(G)$ is infinite.
\end{cor}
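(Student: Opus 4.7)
The plan is to exhibit an explicit sequence of finite-index subgroups whose first Betti numbers tend to infinity, and the natural candidates are the rigid level stabilisers $\rst_G(n)$ themselves.

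First I would observe that, because $G$ is a branch group (Theorem \ref{thm_quotFinite}), each $\rst_G(n)$ has finite index in $G$. Hence every $\rst_G(n)$ is admissible as an $H$ in the definition
\[
vb_1(G) = \sup\{b_1(H) : |G/H| < \infty\}.
\]

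Next I would invoke Theorem \ref{thm_rstAB}, which identifies $\rst_G(n)^{ab}$ with $\prod_{i=1}^{m_{n+1}} \mathbb{Z}$. Therefore
\[
b_1(\rst_G(n)) \;=\; \operatorname{rank}\bigl(\rst_G(n)^{ab}\bigr) \;=\; m_{n+1} \;=\; \prod_{i=0}^{n} l_i.
\]
Since $l_i \geq 7$ for all $i$, the sequence $m_{n+1}$ is strictly increasing and unbounded.

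Combining these two observations gives $vb_1(G) \geq b_1(\rst_G(n)) = m_{n+1}$ for every $n \geq 1$, so $vb_1(G) = \infty$. No obstacle remains: the content is entirely contained in Theorem \ref{thm_rstAB} and in the branch property, and the corollary is just their juxtaposition.
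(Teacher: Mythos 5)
Your proof is correct and is exactly the paper's argument: use the branch property to see that each $\rst_G(n)$ has finite index, then invoke Theorem~\ref{thm_rstAB} to read off $b_1(\rst_G(n)) = m_{n+1} \to \infty$. You have merely spelled out the two steps that the paper's one-line proof leaves implicit.
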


\begin{proof}
Theorem \ref{thm_rstAB} states that the rank of $\rst_G(n)^{ab}$ is $m_{n+1}$ for all $n$.
\end{proof}

\section{Growth}

Denote for any finitely generated group $\Gamma = \left<X\right>$ for any element $\alpha=\prod_{i=1}^{m_\alpha}
x_{j_i}^{\pm 1}$, $x_{j_i} \in X$ in $\Gamma$ by \[|\alpha|=\min\left\{m_\alpha: \alpha=\prod_{i=1}^{m_\alpha}
x_{j_i}^{\pm 1}, x_{j_i} \in X\right\}\] the word length of $\alpha$ in the
generators $X$ of $\Gamma$. Write $\gamma_\Gamma(n) = |\left\{\alpha \in \Gamma: \left|\alpha| \leq n\right\}\right|$
for the growth function of $\Gamma$.

\begin{prop}\label{prop_nonpoly}
The group $G$ does not have polynomial growth.
\end{prop}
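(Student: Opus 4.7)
The plan is to derive a contradiction with Theorem \ref{thm_rstAB} (which says $\rst_G(n)^{ab}\simeq\mathbb{Z}^{m_{n+1}}$) from the assumption of polynomial growth, exploiting the fact that $m_{n+1}=\prod_{i=0}^n l_i$ is unbounded. The two standard facts about growth that I will use are that the degree of polynomial growth is a commensurability invariant (finite-index subgroups have the same degree in their intrinsic word metric), and that it does not increase when passing to a quotient (the induced generating set on the quotient gives $\gamma_{G/N}(n)\le\gamma_G(n)$).

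Assume, for a contradiction, that $\gamma_G(n)\le Cn^d$ for some constants $C$ and $d$. Since $G$ is a branch group (Theorem \ref{thm_quotFinite}), each rigid level stabilizer $\rst_G(n)$ has finite index in $G$, and hence also has polynomial growth of degree at most $d$. Passing to the abelianization $\rst_G(n)^{ab}$ can only reduce the growth function in any generating set, so this abelianization is of polynomial growth of degree at most $d$ as well. By Theorem \ref{thm_rstAB}, however, $\rst_G(n)^{ab}\simeq\mathbb{Z}^{m_{n+1}}$ has polynomial growth of degree exactly $m_{n+1}$, so $m_{n+1}\le d$ for every $n\ge 1$.

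This is impossible because the defining primes $l_i$ are all at least $7$, so $m_{n+1}\ge 7^{n+1}\to\infty$, contradicting the bound $m_{n+1}\le d$. The proof is thus almost entirely formal, and there is no serious obstacle: the only non-trivial input beyond results already proved is the two monotonicity principles for polynomial growth cited above, both of which are classical. An equally short alternative would be to invoke Gromov's polynomial growth theorem together with Corollary \ref{cor_bettiNumber}, since a virtually nilpotent group has $vb_1<\infty$; I prefer the abelianization route because it is elementary and uses only what the paper has already established.
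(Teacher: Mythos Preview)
Your argument is correct and rests on the same core fact as the paper's proof, namely Theorem~\ref{thm_rstAB}. The paper's one-line proof is marginally more direct: rather than passing to the finite-index subgroup $\rst_G(n)$ and then to its abelianisation, it simply observes (from the proof of Theorem~\ref{thm_rstAB}) that free abelian groups of every rank embed in $G$, and then uses only the subgroup monotonicity of growth; your route via finite index plus quotient monotonicity reaches the same contradiction with one extra step.
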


\begin{proof}
The free abelian group $F_n^{ab}$ of rank $n$ embeds into $G$ for all $n \in \mathbb{N}$ as the proof of Theorem
\ref{thm_rstAB} shows.
\end{proof}

\begin{lemma}\label{lemma_G_1inG}
Let $v_{3k+2}$ be the $(3k+2)$nd vertex on level $1$ for $k=0,\dots,\lfloor\frac{l_0}{3}\rfloor$. Then the action of $G$ on it is given by $G|_{v_{3k+2}} = G_1$.
\end{lemma}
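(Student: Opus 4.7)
The plan is to prove the two inclusions $G|_{v_{3k+2}} \subseteq G_1$ and $G|_{v_{3k+2}} \supseteq G_1$ separately, both using elementary observations about the generators that have already been recorded.

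\textbf{Upper inclusion.} First I would observe that $\St_G(v_{3k+2}) = \St_G(1) = B$. Indeed, if $g \in \St_G(v_{3k+2})$ then its image in $G/\St_G(1) = A_0$ is a rooted automorphism fixing $v_{3k+2}$; but $A_0$ acts on level $1$ as a cyclic group of prime order $l_0$, so the only element fixing a point is the identity. Applying Proposition \ref{prop_spinal}(b) gives $B \leq G_1 \times \cdots \times G_1$, and hence the section at $v_{3k+2}$ of every element of $\St_G(v_{3k+2}) = B$ lies in $G_1$. Therefore $G|_{v_{3k+2}} \leq G_1$.

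\textbf{Lower inclusion.} From the recursive definition $b = (b_1, a_1, 1, \dots, 1)_1$ and the definition $b(i) = b^{a_0^{i-1}}$, conjugation by the cyclic rooted permutation $a_0^{i-1}$ shifts the pair $(b_1, a_1)$ to positions $(i, i+1 \bmod l_0)$. Hence
\[ b(3k+1)\bigr|_{T_{v_{3k+2}}} = a_1, \qquad b(3k+2)\bigr|_{T_{v_{3k+2}}} = b_1, \]
and both elements lie in $B = \St_G(v_{3k+2})$ since they have trivial rooted component. Consequently $G|_{v_{3k+2}}$ contains $\langle a_1, b_1 \rangle = G_1$, which together with Step~1 yields $G|_{v_{3k+2}} = G_1$.

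\textbf{Main obstacle.} There is no serious obstacle: the statement is a direct unpacking of the definition of $b$ together with Proposition \ref{prop_spinal}(b). The only point requiring care is that the indices $3k+1$ and $3k+2$ must be interpreted modulo $l_0$ when $k = \lfloor l_0/3 \rfloor$ and $l_0 \equiv 1 \pmod 3$ (so that $v_{3k+2}$ wraps around to $v_1$); since $l_0 \geq 7$ is prime, $l_0 \not\equiv 0 \pmod 3$, so the wraparound is unambiguous and $b(l_0)$, $b(1)$ play the roles of $b(3k+1)$, $b(3k+2)$ in that edge case. The choice of every third vertex presumably matters for later applications (the supports of $b(3k+1)b(3k+2)$ for distinct $k$ are pairwise disjoint) rather than for the present lemma.
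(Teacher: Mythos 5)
Your proof is correct and uses essentially the same idea as the paper: the paper's one-line proof simply records that $b(1)|_{v_2}=a_1$ and $b(2)|_{v_2}=b_1$ (and similarly for the other $v_{3k+2}$), which is exactly your lower inclusion; the upper inclusion $G|_{v_{3k+2}}\leq G_1$ is left implicit there, and your derivation of it from $\St_G(v_{3k+2})=\St_G(1)=B$ together with Proposition~\ref{prop_spinal}(b) is the obvious way to make it explicit. Your remark about the wrap-around index for the top value of $k$ is an accurate observation about the stated range rather than a gap in the proof itself.
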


\begin{proof}
The action $G|_{T_{v_2}}$ is given by $b(1)=(b_1,a_1,1,\dots,1)_1$ and $b(2)=a^{-1}ba = (1,b_1,a_1,\dots,1)$ and similarly for $v_{3k+2}$.
\end{proof}

\begin{lemma}\label{lemma_recursion}
$\gamma_G\left(m_i^2 n\right) \geq \gamma_{G_i}(n)^{\left\lfloor\frac{m_i}{3^i}\right\rfloor}$.
\end{lemma}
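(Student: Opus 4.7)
The plan is to produce many distinct elements of $G$ of word-length at most $m_i^2 n$ by placing independent copies of arbitrary length-$n$ words of $G_i$ on many disjoint subtrees at level $i$. Specifically, I want to embed the cartesian product of $N = \lfloor m_i/3^i \rfloor$ copies of the length-$n$ ball in $G_i$ into the length-$m_i^2 n$ ball of $G$.

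I would first iterate Lemma~\ref{lemma_G_1inG} through $i$ levels. At level~$1$, that lemma provides vertices $v_{3k+2}$ (for $k = 0,\dots,\lfloor l_0/3\rfloor-1$) on which $G$ acts as $G_1$ via $b(3k+1)|_{T_{v_{3k+2}}}=a_1$ and $b(3k+2)|_{T_{v_{3k+2}}}=b_1$, and the level-$1$ support of this pair is confined to $\{v_{3k+1},v_{3k+2},v_{3k+3}\}$; these triples are pairwise disjoint across $k$. The subtree at each chosen vertex is again of the same irregular type (with defining sequence $(l_1,l_2,\dots)$ and group $G_1$), so the construction applies inside, and so on. Iterating $i$ times yields $N$ vertices $v^{(1)},\dots,v^{(N)}\in\Omega(i)$ together with pairs $(A_j,B_j)\in G\times G$ satisfying $A_j|_{T_{v^{(j)}}}=a_i$, $B_j|_{T_{v^{(j)}}}=b_i$, with pairwise disjoint level-$i$ supports.

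Next, I would bound the word-lengths. Since $b(k)=a^{l_0-k+1}ba^{k-1}$ has length at most $l_0+1$ in $G$, the nested substitution through $i$ levels gives $|A_j|_G,|B_j|_G\leq\prod_{k=0}^{i-1}(l_k+1)$, which is $O(m_i)$. Then for any $(g_1,\dots,g_N)\in G_i^N$ with $|g_j|_{G_i}\leq n$, writing $g_j$ as a shortest word $w_j(a_i,b_i)$ and setting
\[ h \;=\; \prod_{j=1}^N w_j(A_j,B_j)\;\in G, \]
the disjoint-support property forces $h|_{T_{v^{(j)}}}=g_j$, so different $N$-tuples yield different $h$. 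Meanwhile $|h|_G\leq N\cdot n\cdot O(m_i)\leq m_i^2 n$ since $N\leq m_i/3^i$. This produces $\gamma_{G_i}(n)^N$ distinct elements in the radius-$m_i^2 n$ ball of $G$.

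The main obstacle is the combinatorial bookkeeping in the first step: a careful induction on $i$ is needed to verify simultaneously that the iterative construction really delivers at least $\lfloor m_i/3^i\rfloor$ vertices on level $i$ with pairwise disjoint supports, and that the nested generator realizations compound to words of length only $O(m_i)$ in the original generators of $G$ (rather than something exponentially larger). The spacing of $3$ at every level of descent is precisely what makes these two constraints compatible and yields the $m_i^2 n$ bound, while the $(2/3)^i$ slack absorbs the multiplicative constants in the length estimate.
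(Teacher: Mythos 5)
Your approach is essentially the paper's: iterate Lemma~\ref{lemma_G_1inG} to produce roughly $m_i/3^i$ vertices at level $i$ that carry independent copies of $G_i$, and then count products of length-$n$ words placed there. The only substantive difference is the word-length bookkeeping. The paper realizes $a_1,b_1$ at the fixed vertex $v_2$ using $b(1)$ (length $1$) and $b(2)$ (length $3$) and then conjugates the whole word by a power $a^{\pm\lfloor l_0/2\rfloor}$ to move it to $v_{3k+2}$, yielding a per-level cost of $3n+l_0$ that iterates to $m_i\bigl(n+\sum_j l_j\bigr)\le m_i^2 n$. You instead realize $a_1,b_1$ at $v_{3k+2}$ directly via $b(3k+1),b(3k+2)$ of length up to $l_0+1$, giving the larger per-level cost $(l_0+1)n$; this is compensated by the $3^{-i}$ in the exponent, since $\prod_{k<i}(l_k+1)\le 2^i m_i$ together with $N\le m_i/3^i$ gives $N\cdot n\cdot\prod_{k<i}(l_k+1)\le(2/3)^i m_i^2 n$. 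Both are valid; the paper's estimate is tighter for large $n$, yours has a cleaner uniform per-step bound that you can simply multiply.

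One small inaccuracy you should correct when writing this up: the pairs $(A_j,B_j)$ do not have pairwise disjoint level-$i$ supports. Because $b_n=(b_{n+1},a_{n+1},1,\dots,1)_n$ carries the rooted component $a_{n+1}$ on a sibling vertex, the nested $A_j,B_j$ spill onto the flanking subtrees of each triple, not only the central one, and those flanking subtrees are shared with other indices $j'$ having the same ancestral chain. The property you actually need, and which does hold, is weaker: $A_{j'},B_{j'}$ act trivially on the chosen subtree $T_{v^{(j)}}$ for $j'\ne j$, because every chosen $v^{(j)}$ lies in the \emph{central} subtree of each triple along its ancestry, and the spillover is always onto side subtrees containing no chosen vertex. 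This is exactly the bookkeeping you flag as the main obstacle, and that induction does go through, so the proof is sound.
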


\begin{proof}
We need a word of length at most $3$ to get the generators of $G_1$ on $v_2$. Every word on $v_{3k+2}$ is given by one on $v_2$ conjugated by at most $a^{\pm \left\lfloor \frac{l_0}{2}\right\rfloor}$. Hence we get a recursion
\[\gamma_G\left( \left\lfloor \frac{l_0}{3}\right\rfloor(3n+l_0)\right)\geq \gamma_{G_1}(n)^{\left\lfloor \frac{l_0}{3}\right\rfloor}.\]
We can estimate this expression by $\frac{l_0}{3}(3n+l_0) \leq l_0 n + l_0^2$. Iterating this we get \[\left(l_0n + l_0^2\right)l_1 + l_1^2=l_0l_1n + l_0^2l_1 + l_1^2\] and so for the $i$-th step $m_i\left(n+\sum_{j=0}^{i-2}l_j\right)+l_{i-1}^2 \leq m_i\left(n+\sum_{j=0}^{i-1} l_j\right) \leq m_i^2n$. Hence we get
\[\gamma_G\left(m_i^2n\right) \geq \gamma_{G_i}(n)^{\left\lfloor \frac{m_i}{3^i}\right \rfloor}.\]
\end{proof}

\begin{prop}\label{prop_WordsLength_l_i_inG_i}
$\gamma_{B_i}\left(2l_i\right) \geq 2^{l_i-1}  \cdot \left(l_i-1\right)^{\frac{l_i}{2}-2}$.
\end{prop}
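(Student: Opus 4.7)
The plan is to pass to the abelianization. By Corollary~\ref{cor_BnAB}, $B_i^{\mathrm{ab}} \cong \mathbb{Z}^{l_i}$, with each generator $b_i(j)$ mapping to a standard basis vector $e_j$. Since two elements of $B_i$ with distinct abelianization images are themselves distinct, $\gamma_{B_i}(2l_i)$ is at least the number of abelianization vectors reachable by words of length at most $2l_i$ in the generators $b_i(j)^{\pm 1}$. A word of length $m$ produces a vector $v \in \mathbb{Z}^{l_i}$ with $\|v\|_1 \leq m$, and conversely every such vector is realized by $\prod_{j=1}^{l_i} b_i(j)^{v_j}$ of length exactly $\|v\|_1$. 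This reduces the proposition to showing that $|\{v \in \mathbb{Z}^{l_i} : \|v\|_1 \leq 2l_i\}|$ is at least $2^{l_i-1}(l_i-1)^{l_i/2-2}$.

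For the combinatorial lower bound, I would exhibit an explicit family. For each sign vector $\boldsymbol{\epsilon} \in \{\pm 1\}^{l_i-1}$ and each ordered tuple $\mathbf{k} = (k_1, \dots, k_{l_i/2-2}) \in \{1, \dots, l_i-1\}^{l_i/2-2}$, form the word
\[
w_{\boldsymbol{\epsilon}, \mathbf{k}} = b_i(1)^{\epsilon_1} b_i(2)^{\epsilon_2} \cdots b_i(l_i-1)^{\epsilon_{l_i-1}} \cdot b_i(k_1)^2 b_i(k_2)^2 \cdots b_i(k_{l_i/2-2})^2,
\]
of length $(l_i-1) + 2(l_i/2-2) = 2l_i - 5 \leq 2l_i$. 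The parity of the $j$-th abelianization coordinate recovers $\epsilon_j$, and its magnitude recovers the multiplicity $|\{t : k_t = j\}|$, so the abelianization pins down $\boldsymbol{\epsilon}$ and the multiset of $\mathbf{k}$. The parameter count is exactly $2^{l_i-1}(l_i-1)^{l_i/2-2}$.

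The hardest step is injectivity: distinct ordered tuples $\mathbf{k}$ with the same multiset yield the same abelianization vector, so one must either (a) refine the encoding to make $\mathbf{k}$ recoverable as an ordered tuple from $v$ alone---for instance by interleaving the $b_i(k_t)^2$ blocks with the reserved generator $b_i(l_i)$ used as a positional marker (budgeting the length accordingly)---or (b) work directly in $B_i$ and distinguish orderings via the partial commutation structure given by equation~\eqref{eq_comm} at level $i$. In the latter, adjacent $b_i(k)^2$ and $b_i(k')^2$ commute iff $|k-k'| \not\equiv 1 \pmod{l_i}$, and non-commutation is visible on level $i+1$ of $T_i$, where the commutator $[a_{i+1}, b_{i+1}]$ appears in distinct positions for distinct orderings. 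Completing this distinguishability argument---whether via a combinatorial refinement of the encoding or a level-$(i{+}1)$ projection of $B_i$ detecting the order of the $b_i(k_t)^2$ factors---is the crux of the proof.
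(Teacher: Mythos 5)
Your first paragraph reduces the proposition to a claim that cannot be true: that the number of lattice points $v\in\mathbb{Z}^{l_i}$ with $\lVert v\rVert_1\leq 2l_i$ is at least $2^{l_i-1}(l_i-1)^{l_i/2-2}$. The $\ell^1$-ball of radius $2l_i$ in $\mathbb{Z}^{l_i}$ contains only singly-exponentially many points in $l_i$ (its volume is $(4l_i)^{l_i}/l_i!\sim (4e)^{l_i}$, and the exact count $\sum_j 2^j\binom{l_i}{j}\binom{2l_i}{j}$ is likewise of the form $C^{l_i}$ up to polynomial factors), whereas $(l_i-1)^{l_i/2-2}$ grows super-exponentially like $l_i^{l_i/2}$. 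Already for $l_i\geq 43$ the right-hand side exceeds the total number of abelianization classes reachable by words of length $\leq 2l_i$, so the reduction fails outright for all but the first few admissible primes. In particular the injectivity problem you flag is not the crux: even with a perfectly injective encoding, the codomain $B_i^{\mathrm{ab}}$ is simply too small. Your proposed repairs do not escape this, either: fix~(a) still tries to separate elements inside $\mathbb{Z}^{l_i}$, and fix~(b), while correctly noting that only consecutive conjugates $b_i(k),b_i(k')$ with $\lvert k-k'\rvert\equiv 1\pmod{l_i}$ fail to commute, means that almost all reorderings of the tuple $\mathbf{k}$ genuinely give the same group element, not merely the same image in $B_i^{\mathrm{ab}}$, so the $(l_i-1)^{l_i/2-2}$ ordered tuples really do collapse to far fewer elements of $B_i$.

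The paper's own argument is of a different shape: it works in $G_i=\langle a_i,b_i\rangle$ rather than in $B_i$ or its abelianization, building words $a_i^{q_0}\prod_j b_i^{s_j}a_i^{q_j}$ of length $\leq 2l_i$ in the two-letter alphabet $\{a_i^{\pm 1},b_i^{\pm 1}\}$ and counting them via weak compositions of $l_i$ into $l_i$ parts, i.e.\ $\binom{2l_i-1}{l_i-1}$, followed by a Stirling-type estimate. Note that $a_i$ is essential there and is not available to you once you restrict to $B_i$ and its generating set $\{b_i(1),\dots,b_i(l_i)\}$, so you are not just taking a different route to the same count. To give a correct proof along your lines you would need to abandon the abelianization entirely and count genuinely non-commuting configurations in $B_i$ (or revert to the paper's $G_i$-word count), since no argument that factors through $B_i^{\mathrm{ab}}$ can produce a bound that is super-exponential in $l_i$.
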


\begin{proof}
We place $l_i$ conjugates of $b_i$ on $l_i$ places. In particular we build words of the form \[x=a_i^{q_0} \cdot
\prod_{j=1}^{r} b_i^{s_j} a_i^{q_j}\] with $q_j \in \mathbb{N}$ for $j\in \{1, \dots,
r\}$, $q_0 \in \mathbb{N} \cup \{0\}$, $s_j \in \mathbb{Z}$ and the restriction that $\sum_{k=0}^{r} q_k \leq l_i$. We
need at most $l_i$ times the letter $a_i$ and hence at most $2 \cdot l_i$ letters for any word as the above. This is
equivalent to a weak composition of $l_i$ into $l_i$ parts. By \cite{knuth} this number is given as
$\binom{2l_i-1}{l_i-1}$ or 

\[\frac{\left(2l_i-1\right)!}{\left(l_i-1\right)!l_i!} = \frac{\left(2l_i-1\right)\cdots \left(2l_i-2\right) \cdots \left(l_i+1\right)}{\left(l_i-1\right)!}.\] Now we use that for each factor $\left(l_i-k\right)$ in the denominator for $1 \leq k \leq \left\lfloor \frac{l_i}{2}\right\rfloor$ we have exactly one factor $\left(2l_i - 2k\right)$ in the nominator. Hence the last expression simplifies to

\[\frac{1}{\left(\left\lfloor\frac{l_i}{2}\right\rfloor\right)!}2^{l_i-1} \left(2 l_i-1\right) \left(2 l_i-3\right)
\cdots \left(l_i + 1\right) \geq \frac{1}{\left(\left\lfloor\frac{l_i}{2}\right\rfloor\right)!} 2^{l_i-1} \left(2l_i -
2\right) \cdots l_i\]
\[= \frac{1}{\left(\left\lfloor\frac{l_i}{2}\right\rfloor\right)!} \cdot 2^{l_i-1} \cdot 2^{l_i-2} \cdot
\left(l_i-1\right)! \geq \frac{1}{e} \left(\frac{\left\lfloor\frac{l_i}{2}\right\rfloor
+1}{e}\right)^{-\left\lfloor\frac{l_i}{2}\right\rfloor-1}\cdot 2^{2l_i-3} \cdot e
\left(\frac{l_i-1}{e}\right)^{l_i-1} \] 
\[\geq 2 \left(\frac{4}{e^{\frac{1}{2}}}\right)^{l_i-2} \cdot \left(l_i-1\right)^{\frac{l_i}{2}-2}\geq 2^{l_i-1}  \cdot \left(l_i-1\right)^{\frac{l_i}{2}-2}.\]
\end{proof}

\begin{theorem}
If the sequence $\left\{l_i\right\}$ satisfies $\log\left(l_i-1\right) \geq 5 \cdot \left(\frac{47}{5}\right)^i
\prod_{j=0}^{i-1} l_j$ for all $i$ then the group $G$ has exponential growth rate.
\end{theorem}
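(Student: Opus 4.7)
The plan is to find a sequence $n_i \to \infty$ along which $\log \gamma_G(n_i)/n_i$ is bounded below by a positive constant; since $\log \gamma_G$ is subadditive, Fekete's lemma then forces the limit $\lim_n \log \gamma_G(n)/n$ to exist and to be at least that constant, so $G$ has exponential growth rate.

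Take $n_i := 2 m_i^2 l_i$. Applying Lemma~\ref{lemma_recursion} with $n = 2l_i$, together with the inequality $\gamma_{G_i}(2l_i) \geq \gamma_{B_i}(2l_i)$ (using $B_i \leq G_i$ with compatible generators), Proposition~\ref{prop_WordsLength_l_i_inG_i} yields
\[
\gamma_G(n_i) \;\geq\; \bigl(2^{l_i-1}(l_i-1)^{l_i/2 - 2}\bigr)^{\lfloor m_i/3^i\rfloor}.
\]
I would then take logarithms, divide by $n_i$, and isolate the dominant contribution. For $l_i \geq 8$ one has $l_i/2 - 2 \geq l_i/4$, and once $m_i/3^i \geq 2$ one has $\lfloor m_i/3^i\rfloor \geq m_i/(2\cdot 3^i)$. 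Dropping the non-negative summand $(l_i-1)\log 2$ and applying these bounds reduces the estimate to
\[
\frac{\log \gamma_G(n_i)}{n_i} \;\geq\; \frac{\log(l_i-1)}{16 \cdot 3^i \, m_i}.
\]

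The hypothesis $\log(l_i-1) \geq 5(47/5)^i m_i$ is calibrated exactly to this expression: substituting gives a lower bound of $\tfrac{5}{16}(47/15)^i \geq 5/16$, positive and independent of $i$. Fekete's lemma applied to the subadditive sequence $\log \gamma_G(n)$ then delivers the conclusion, since the existing limit must agree with the subsequential limit along $\{n_i\}$. The main obstacle is purely arithmetic bookkeeping: verifying that the auxiliary bounds ($l_i \geq 8$, $m_i \geq 2 \cdot 3^i$, and so on) hold for all $i$ large enough that the discarded lower-order terms do not consume the main contribution, and that the quotient of geometric factors $(47/5)/3 = 47/15$ does indeed exceed $1$. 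The hypothesis is stated with substantial slack, so no delicate optimization is required beyond these routine checks.
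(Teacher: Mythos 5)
Your argument is correct and follows essentially the same route as the paper: combine Lemma~\ref{lemma_recursion} (with $n=2l_i$) and Proposition~\ref{prop_WordsLength_l_i_inG_i} to bound $\gamma_G(2m_i^2l_i)$ from below, then show the hypothesis on $\{l_i\}$ turns this into a lower bound $e^{cn_i}$ with $c>0$ independent of $i$. The paper uses slightly different numerical estimates (keeping the $2^{l_i-1}$ factor and the constants $21/47$, $15/47$) and leaves the appeal to Fekete's lemma implicit, whereas you drop the $2^{l_i-1}$ term and invoke subadditivity explicitly; both versions are sound and yield the same conclusion.
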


\begin{proof}
We want an estimate for $\gamma_G(r)$. For this we choose $i$ such that $r \geq m_i^2 \cdot 2 l_i$. Lemma
\ref{lemma_recursion} and Proposition \ref{prop_WordsLength_l_i_inG_i} then give $\gamma_G(r) \geq 2^{l_i-1}  \cdot
\left(\left(l_i-1\right)^{\left\lfloor\frac{l_i}{2}\right\rfloor-2}\right)^{\left\lfloor\frac{m_i}{3^i}\right\rfloor}$.
We compare this expression to $e^{2 m_i^2 l_i}$ and assume $l_i \geq 47$. Then we have $\left\lfloor
\frac{l_i}{2}-2\right\rfloor \geq \frac{21}{47} l_i$ and $\left\lfloor \frac{l_i}{3} \right\rfloor \geq \frac{15}{47}
l_i$ and the inequality $e^{m_i^2 \cdot 2 l_i} \leq \left(2^{l_i-1} \cdot
\left(l_i-1\right)^{\left\lfloor\frac{l_i}{2}\right\rfloor -2} \right)^{\left\lfloor\frac{m_i}{3^i}\right\rfloor}$
becomes

\[3^i m_i^2 \cdot 2 l_i \leq \left(\frac{15}{47}\right)^i m_i \cdot \frac{21}{47} l_i \log \left(l_i-1\right).\] 
Cancellation now gives $5 m_i \cdot  \left(\frac{47}{5}\right)^i \leq \log\left(l_i-1\right)$.

\end{proof}

\section{Non-Trivial Words}

Our object in this section is to show that if the defining sequence satisfies $l_i \geq \left(25 l_{i-1}\right)^{3 m_i}$
for all $i \in \mathbb{N}$ then the group constructed above has no free subgroups of rank $2$. Indeed, given any two
elements $g_1$, $g_2$ of the group we construct explicitly a non-trivial word $w$ in the free group of rank $2$ such
that $w\left(g_1, g_2\right)=1$.

\medskip

It follows from Proposition \ref{prop_spinal} that we can write every $g \in G$ as $g = a^k \prod_{i=1}^{s_g}
b\left(k_i\right)^{q_i}$ with $k, q_i \in \mathbb{Z}$, $k_i \in \left\{1, \dots, l_0\right\}$  and $s_g \in \mathbb{N}$.

\begin{definition}
A \emph{spine} $s = g^{-1} b^q g$ is a power of a $g$-conjugate $b$ with $g \in G$ and some $q \in
\mathbb{Z}\backslash\left\{0\right\}$. Denote by \[\xi( g ) = \min \left\{s_g: g = a^k
\prod_{i=1}^{s_g}
b\left(k_i\right)^{q_i}\right\}\] the number of spines of $g$ for any $g \in G$.
\end{definition}

\begin{remark}
The number of spines $\xi(g)$ should not be confused with the word length of $g$ if $g \in B$ as a word in the
generators of
$B$, $\left\{ b(1), \dots, b\left(l_0\right) \right\}$.
\end{remark}

\begin{lemma} \label{lemma_spineInequ}
$\xi( gh) \leq \xi( g ) + \xi( h )$ and hence $\xi\left( g^h \right) \leq \xi (g) + 2 \xi (h)$ for any $g,h \in G$.
\end{lemma}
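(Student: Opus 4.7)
The plan is to establish the subadditivity $\xi(gh) \leq \xi(g) + \xi(h)$ by direct manipulation of a minimal spine decomposition, observing that the only interaction between the two factors is a single $a$-power that can be absorbed into the indices of the $b(k_i)$'s. The second inequality will then follow formally.

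First I would record the key observation that conjugation by a power of $a$ merely permutes the generating set $\{b(1),\ldots,b(l_0)\}$: for any integer $m$ and any index $k_i$, we have $a^{-m} b(k_i)^{q_i} a^m = b(k_i')^{q_i}$ for some $k_i' \in \{1,\ldots,l_0\}$ (namely $k_i' \equiv k_i + m \pmod{l_0}$). In particular, this operation does not alter the number of spines in a decomposition.

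Next, choose minimal decompositions $g = a^{k_g}\prod_{i=1}^{\xi(g)} b(k_i)^{q_i}$ and $h = a^{k_h}\prod_{j=1}^{\xi(h)} b(m_j)^{r_j}$. Then
\begin{equation*}
gh = a^{k_g}\prod_{i=1}^{\xi(g)} b(k_i)^{q_i}\cdot a^{k_h}\prod_{j=1}^{\xi(h)} b(m_j)^{r_j} = a^{k_g+k_h}\Bigl(a^{-k_h}\prod_{i=1}^{\xi(g)} b(k_i)^{q_i} a^{k_h}\Bigr)\prod_{j=1}^{\xi(h)} b(m_j)^{r_j},
\end{equation*}
and by the observation above the middle factor is a product of $\xi(g)$ spines $b(k_i')^{q_i}$. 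Hence $gh$ is expressed as $a^{k_g+k_h}$ times a product of at most $\xi(g)+\xi(h)$ spines, giving the first inequality. To obtain $\xi(h^{-1}) = \xi(h)$, I would apply the same trick: reverse the minimal decomposition of $h$, obtain $h^{-1} = \prod b(k_i)^{-q_i} \cdot a^{-k_h}$, and pull $a^{-k_h}$ to the left, producing a decomposition of $h^{-1}$ with exactly $\xi(h)$ spines.

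For the second statement, simply combine the two: $\xi(g^h) = \xi(h^{-1}gh) \leq \xi(h^{-1}) + \xi(g) + \xi(h) = \xi(g) + 2\xi(h)$. There is no serious obstacle here; the only point that requires care is the bookkeeping of $a$-powers between adjacent spine blocks, and the fact that absorbing them changes only the labels $k_i$, not the count. No cancellation between spines from $g$ and spines from $h$ needs to be ruled out, since we only require an upper bound.
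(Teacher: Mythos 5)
Your proof is correct and simply fills in the details behind the paper's one-line justification that the claim ``follows immediately from the definition''. The key steps you identify---absorbing the interposed power of $a$ by the relation $a^{-m}b(k)a^m = b(k+m \bmod l_0)$, deducing $\xi(h^{-1}) \leq \xi(h)$ by reversing a minimal decomposition, and combining---are exactly what the definition-chase requires, so this is the same approach, just spelled out.
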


\begin{proof}
This follows immediately from the definition.
\end{proof}

Let $g_1, g_2 \in G$. Recursively define commutators $c_{1}= \left[g_1, g_2\right]$ and $c_i =
\left[c_{i-1}, c_{i-1}^{c_{i-2}}\right]$ for $i \geq 2$ with $c_0 = g_1$. Then we get the following lemma:

\begin{lemma}
If $g_1, g_2 \in G$, then the number of spines $\xi\left( c_i \right)$ in the commutator $c_i$ defined as above is
bounded by $\xi\left( c_i \right)  \leq 5^i \left( \xi\left(
g_1 \right) + \xi\left( g_2 \right) \right)$ for all $i \geq 0$.
\end{lemma}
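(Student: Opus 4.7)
The plan is a straightforward induction on $i$, using the previous lemma as the only tool. Write $S := \xi(g_1) + \xi(g_2)$ throughout. Note that the spine count is symmetric under inversion, so from $\xi(gh) \le \xi(g) + \xi(h)$ one immediately gets $\xi([x,y]) = \xi(x^{-1}y^{-1}xy) \le 2\xi(x) + 2\xi(y)$ for any $x,y \in G$; this is the workhorse identity.

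For the base cases, I would check $i=0$ and $i=1$ directly. For $i=0$, $\xi(c_0) = \xi(g_1) \le S = 5^0 S$. For $i=1$, $\xi(c_1) = \xi([g_1,g_2]) \le 2\xi(g_1) + 2\xi(g_2) = 2S \le 5S$.

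For the inductive step, fix $i \ge 2$ and assume both $\xi(c_{i-1}) \le 5^{i-1} S$ and $\xi(c_{i-2}) \le 5^{i-2} S$. Apply the conjugation bound from the previous lemma to obtain
\[
\xi\!\left( c_{i-1}^{c_{i-2}} \right) \;\le\; \xi(c_{i-1}) + 2\xi(c_{i-2}) \;\le\; 5^{i-1} S + 2 \cdot 5^{i-2} S \;=\; 7 \cdot 5^{i-2} S.
\]
Then apply the commutator bound derived above to the definition of $c_i$:
\[
\xi(c_i) \;=\; \xi\!\left( [c_{i-1}, c_{i-1}^{c_{i-2}}] \right) \;\le\; 2\xi(c_{i-1}) + 2\xi\!\left( c_{i-1}^{c_{i-2}} \right) \;\le\; 2 \cdot 5^{i-1} S + 2 \cdot 7 \cdot 5^{i-2} S.
\]
Collecting factors of $5^{i-2}S$ gives $\xi(c_i) \le (10 + 14) \cdot 5^{i-2} S = 24 \cdot 5^{i-2} S \le 25 \cdot 5^{i-2} S = 5^i S$, completing the induction.

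There is no real obstacle here; the only thing to watch is bookkeeping of the constant. The choice of the factor $5$ in the statement is exactly tight enough to absorb the $24$ arising from combining one commutator ($\times 2 + \times 2$) with one conjugation ($+2\xi$ of the base) in a single recursive step, so one should not try to prove anything sharper by this method.
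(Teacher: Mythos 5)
Your proof is correct and uses the same ingredients as the paper --- subadditivity $\xi(gh) \le \xi(g) + \xi(h)$, the conjugation bound $\xi(g^h) \le \xi(g) + 2\xi(h)$, and the derived estimate $\xi([x,y]) \le 2\xi(x) + 2\xi(y)$ --- but the structure of the induction is different and, as written, sounder. The paper bounds the ratio $\xi(c_i)/\xi(c_{i-1})$ by $5$ directly, via the intermediate claim $\xi(c_{i-2}) \le \tfrac14\xi(c_{i-1})$, justified by the inequality $4\xi(c_{i-2}) + 2\xi(c_{i-3}) \le \xi(c_{i-1})$. That is a \emph{lower} bound on $\xi(c_{i-1})$, but $\xi$ is defined as a minimum over expressions, so a given factorization of $c_{i-1}$ only yields an \emph{upper} bound; as stated, the paper's inequality points the wrong way. (It would be valid for the literal spine counts $s_j$ produced by the recursion, which satisfy $s_{i-1} = 4s_{i-2} + 4s_{i-3} \ge 4s_{i-2}$, but not for the minimum $\xi$.) Your two-variable induction --- establish $\xi(c_i) \le 4\xi(c_{i-1}) + 4\xi(c_{i-2})$, then substitute $\xi(c_{i-1}) \le 5^{i-1}S$ and $\xi(c_{i-2}) \le 5^{i-2}S$ to get $24 \cdot 5^{i-2}S \le 5^iS$ --- avoids this issue entirely, and also quietly corrects a coefficient: the paper writes $\xi(c_i) \le 4\xi(c_{i-1}) + 2\xi(c_{i-2})$, but the factor on $\xi(c_{i-2})$ should be $4$, since the conjugator $c_{i-2}$ contributes $2\xi(c_{i-2})$ inside $c_{i-1}^{c_{i-2}}$ and that term is doubled by the outer commutator. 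Your observation that $24$ just fits under $25$, so $5$ is tight for this method, is exactly right.
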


\begin{proof}
Using that $c_{i-1}=\left[c_{i-2}, c_{i-2}^{c_{i-3}}\right]$ gives $4 \xi\left(c_{i-2}\right) + 2
\xi\left(c_{i-3}\right) \leq \xi\left(c_{i-1}\right)$ and hence $\xi\left( c_{i-2} \right) \leq \frac14 \xi\left(
c_{i-1}\right)$. This gives $\xi\left(c_i \right) \leq 4 \xi \left(c_{i-1}\right) + 2\xi \left(c_{i-2}\right) \leq 5
\xi\left( c_{i-1} \right)$.
\end{proof}

The strategy is to observe that the number of spines of the commutators $c_i$ grows more slowly than the
number of vertices on each level. We note the position of the spines of $c_i$ and aim to move them by conjugation such
that none of the conjugated spines is at an old position. This new element will then commute with $c_i$. The following
combinatorial proposition will be needed to ensure that such a shift is possible.

\begin{prop}\label{prop_shift}
Let $N \subset \mathbb{N}$ be a finite set with $|N|=n$. Then there exists some $0 < q < n^2$ such that $N \cap N_q =
\emptyset$ where $N_q = \left\{k+q | k \in N\right\}$.
\end{prop}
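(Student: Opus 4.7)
The plan is to reformulate the disjointness condition as a condition on differences and then apply a pigeonhole argument. First I would observe that $N \cap N_q \neq \emptyset$ if and only if there exist $k, k' \in N$ with $k = k' + q$; since $q > 0$, this forces $k > k'$, so the ``bad'' values of $q$ are precisely the positive differences $k - k'$ with $k, k' \in N$ and $k > k'$. The condition $N \cap N_q = \emptyset$ is therefore equivalent to $q$ not lying in the set $D_+ = \{k - k' : k, k' \in N,\ k > k'\}$.

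Next I would bound $|D_+|$. The ordered pairs $(k, k') \in N \times N$ with $k > k'$ number exactly $\binom{n}{2} = n(n-1)/2$, and each such pair contributes at most one element of $D_+$, so $|D_+| \leq n(n-1)/2$.

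Finally, a direct count: the set $\{1, 2, \dots, n^2 - 1\}$ of candidate values of $q$ contains $n^2 - 1$ integers. For $n \geq 2$ we have $n^2 - 1 > n(n-1)/2$ (equivalently $(n-1)(n+2)/2 > 0$), so at least one candidate $q$ in the range $0 < q < n^2$ avoids $D_+$, and for that $q$ we get $N \cap N_q = \emptyset$. The case $n = 1$ is vacuous in the application (a single spine can be shifted to any new position), and the case $n = 0$ is trivial.

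There is essentially no obstacle here — it is a one-line pigeonhole once the disjointness is rephrased as difference-avoidance. The only mild cosmetic observation is that the stated range $q < n^2$ is quite generous; one could sharpen it to roughly $q \leq \binom{n}{2} + 1$, but the round bound $n^2$ is presumably chosen because it is exactly what is needed when the proposition is applied to bound the conjugator length used to shift the spines of $c_i$ off their original positions.
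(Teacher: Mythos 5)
Your proof is correct and takes essentially the same approach as the paper: identify the ``bad'' values of $q$ with the set of differences of elements of $N$, count that set, and conclude by pigeonhole. The only differences are cosmetic --- you count only positive differences ($\leq \binom{n}{2}$) while the paper counts all differences ($\leq (n-1)^2+1$) --- and you are a bit more careful about the degenerate cases $n=0,1$.
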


\begin{proof}
Look at the set $D = \left\{k_i - k_j | k_i, k_j \in N\right\}$. This set has at most $|D| \leq \left(|N|-1\right)^2 +
1$ elements because we get the value zero $n$ times. The elements of this set are exactly the values which we cannot
choose for $q$. Hence there exists some $0 < q < n^2$ with the required property.
\end{proof}

\begin{lemma}\label{lemma_cistab}
For every $i \geq 1$ we have $c_i \in \rst_G(i)$. 
\end{lemma}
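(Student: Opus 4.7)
The plan is to induct on $i$, using Proposition~\ref{cor_rstdash_fg} (which identifies $\rst_G(n)'$ with $G^{(n+2)}$) and Lemma~\ref{lemma_Gder}(d) (which gives $G^{(n+1)} \subseteq \rst_G(n)$) as the engine. A useful preliminary observation is that $\rst_G(n)$ is normal in $G$: conjugation by any element of $G$ merely permutes the vertices of $\Omega(n)$ and hence permutes the direct factors in $\rst_G(n) = \prod_{v \in \Omega(n)} \rst_G(v)$, leaving the whole group invariant.

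Granted the inductive hypothesis $c_{i-1} \in \rst_G(i-1)$, the step to $c_i$ is then essentially forced. By normality $c_{i-1}^{c_{i-2}}$ also lies in $\rst_G(i-1)$, and so
\[c_i = [c_{i-1},\, c_{i-1}^{c_{i-2}}] \in [\rst_G(i-1), \rst_G(i-1)] = \rst_G(i-1)' = G^{(i+1)} \subseteq \rst_G(i).\]

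The substantive work is the base case $i = 1$, i.e.\ showing $c_1 = [g_1, g_2] \in \rst_G(1)$ for arbitrary $g_1, g_2 \in G$. Here I would invoke the semidirect decomposition $G = B \rtimes A$ of Proposition~\ref{prop_spinal}(a) to write $g_j = h_j a^{k_j}$ with $h_j \in B$, and then expand the commutator using the standard identities $[xy,z] = [x,z]^y [y,z]$ and $[x,yz] = [x,z][x,y]^z$. The $[a^{k_1}, a^{k_2}]$ contribution drops out, leaving $c_1$ as a product of $G$-conjugates of $[h_1, h_2] \in B' \subseteq \rst_G(1)$ (placed there by Lemma~\ref{lemma_comm_inrst}) together with mixed commutators of the form $[h, a^k]$ with $h \in B$, $a^k \in A$. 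Controlling these mixed commutators and placing them into the explicit description $\rst_G(1) = B' \cdot B^{l_1}$ from Lemma~\ref{lemma_rst_inStG}(a) is the main obstacle; once the base case is secured, the inductive step is essentially bookkeeping from the derived-series identities of the preceding subsections.
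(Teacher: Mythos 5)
Your inductive step is correct, and in fact cleaner than the one in the paper: from $c_{i-1}\in\rst_G(i-1)$ and normality of $\rst_G(i-1)$ you get $c_i\in\rst_G(i-1)'=G^{(i+1)}\subseteq\rst_G(i)$ directly from Proposition~\ref{cor_rstdash_fg} and Lemma~\ref{lemma_Gder}(d), whereas the paper routes through the inclusion $\St_G(n)'\leq\rst_G(n+1)$, which is not established anywhere (the proof of Theorem~\ref{thm_quotFinite} only yields $\St_G(n)'\leq\rst_G(n)$).

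The gap, however, is exactly where you flag it, and it cannot be closed: the base case $c_1\in\rst_G(1)$ is false. Take $g_1=b$, $g_2=a$, so $c_1=[b,a]=b(1)^{-1}b(2)$. Its image in $B^{ab}\cong\mathbb{Z}^{l_0}$ is $(-1,1,0,\dots,0)$, whereas $\rst_G(1)=B'\cdot B^{l_1}$ (Lemma~\ref{lemma_rst_inStG}(a)) maps into $l_1\mathbb{Z}^{l_0}$, and $l_1\geq 7$. Concretely, $c_1=(b_1^{-1},\,a_1^{-1}b_1,\,a_1,\,1,\dots,1)_1$ and membership in $\rst_G(1)$ would require $(1,1,a_1,1,\dots,1)_1\in G$; but projecting $B=\St_G(1)$ to the $j$-th coordinate and then to $G_1^{ab}\cong C_{l_1}\times C_\infty$, any element of $B$ has image $(n_{j-1}\bmod l_1,\,n_j)$ for a single tuple $(n_1,\dots,n_{l_0})\in\mathbb{Z}^{l_0}$, and coordinates $2$ and $3$ of $(1,1,a_1,1,\dots,1)_1$ force the incompatible conditions $n_2=0$ and $n_2\equiv 1\pmod{l_1}$. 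The paper's own ``$c_1\in G'\leq\rst_G(1)$'' has the same defect, since $G'=N$ is generated by precisely such elements $b(j)^{-1}b(i)$. What \emph{does} hold, and what your recursion delivers once the index is shifted, is $c_i\in G^{(i)}\subseteq\rst_G(i-1)\leq\St_G(i)$ for $i\geq 1$; this weaker statement is all that Proposition~\ref{prop_commShape} actually uses. So rather than fighting the base case, the right move is to re-index the lemma; attempting to place $c_1$ in $\rst_G(1)$ via the $B\rtimes A$ decomposition and mixed commutators $[h,a^k]$, as you propose, is bound to fail because those mixed commutators are exactly the offending generators of $N$.
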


\begin{proof}
We have $c_1 \in G' \leq \rst_G(1) \lhd G$. Hence $c_1^{g_1} \in \rst_G(1)$ and so $c_2 = \left[c_1, c_1^{g_1}\right]
\in
\rst_G(1)' \leq st_G(1)' \leq \rst_G(2)$. Now assume $c_n \in \rst_G(n)$. Then $c_n^{c_{n-1}} \in \rst_G(n)$ and hence
again
$c_{n+1} = \left[c_n, c_n^{c_{n-1}}\right] \in \rst_G(n)' \leq \St_G(n)' \leq \rst_G(n+1)$.
\end{proof}

\begin{prop}\label{prop_commShape}
The commutators $c_i$ have the recursive form $c_i = \left(d_{i,1}, \dots, d_{i, m_i}\right)_i$ where each 
$d_{i,j}$ falls into one of the four cases:

\begin{enumerate}
 \item $d_{i,j}=1$,
 \item $d_{i,j}=b_i^t$ for $t \in \mathbb{Z}$,
 \item $d_{i,j} = a^q b$ with $q \neq 0 \mod l_i$ and $b \in B_i$ or
 \item $d_{i,j} = \left(d_{i+1,1+(j-1) l_i},\dots, d_{i+1,j \cdot l_i}\right)$. 
\end{enumerate}

Further, there exists some level $n$ such that all $d_{i,j}$ will have fallen into one of the first three cases.
\end{prop}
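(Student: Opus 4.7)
The plan is to establish the four-case structural description first and then handle the termination claim using the finiteness of the spine count of $c_i$.

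By Lemma~\ref{lemma_cistab}, $c_i \in \rst_G(i)$ and thus admits the decomposition $c_i = (d_{i,1}, \ldots, d_{i,m_i})_i$ with $d_{i,j} \in G_i$. Proposition~\ref{prop_spinal}(a) applied at level $i$ allows us to write each $d_{i,j}$ uniquely as $a_i^q b$ with $q \in \mathbb{Z}/l_i\mathbb{Z}$ and $b \in B_i$. If $d_{i,j} = 1$ we are in case~1; if $q \not\equiv 0 \pmod{l_i}$, case~3 applies. Otherwise $d_{i,j}$ is a non-trivial element of $B_i = \St_{G_i}(1)$, decomposing as $(e_1,\ldots,e_{l_i})_1$ with $e_k \in G_{i+1}$. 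By Lemma~\ref{eq_b_powers}, $b_i^t$ has the tuple form $(b_{i+1}^t, a_{i+1}^t, 1, \ldots, 1)_1$; if the decomposition matches this form we are in case~2, and otherwise, after the relabelling $e_k = d_{i+1, (j-1)l_i + k}$, we are in case~4.

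For the termination claim, the key fact is the bounded spine count $\xi(c_i) \leq 5^i(\xi(g_1) + \xi(g_2))$ from the preceding lemma. Each spine $g^{-1}b^q g$ has its non-trivial action supported along a single ray in $T$: at each level this ray touches one vertex where the restriction is a conjugate of $b_n^q$, and one sibling vertex where a rooted $a$-power appears. When we unfold $c_i$ recursively along case~4, the finitely many spines must eventually be distributed among the exponentially growing set of $m_n$ vertices so that no vertex carries more than one spine's ray, since $G$ acts faithfully on $T$ and $m_n \to \infty$. At such a level $n$, each $d_{n,j}$ encodes the local action of at most one spine, and one more round of decomposition using the explicit form $b_n(k)^q = (1, \ldots, 1, b_{n+1}^q, a_{n+1}^q, 1, \ldots, 1)_1$ places every component in cases 1, 2, or 3.

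The main obstacle is the termination argument, which rests on two subpoints: distinct spines must eventually land at distinct vertices (a consequence of faithfulness combined with $m_n \to \infty$), and an isolated single-spine contribution must reduce only to cases 1--3 (possibly after one extra level of unfolding). Careful bookkeeping of spine propagation through the decomposition --- tracking how the recursive identity $b_n = (b_{n+1}, a_{n+1}, 1, \ldots, 1)_n$ propagates the action level by level, and how the conjugator $g$ in a spine determines its ray in $T$ --- is needed to make the argument rigorous.
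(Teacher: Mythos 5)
Your handling of the four-case decomposition is sound and essentially the same as the paper's: from Lemma~\ref{lemma_cistab} one gets $c_i \in \rst_G(i)$, writes each component as $a_i^q \cdot (\text{element of } B_i)$ and sorts by whether $q\neq 0$, whether the $B_i$-part is a pure power $b_i^t$, etc.

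The termination argument is where you diverge, and there is a real gap there. You claim that because $\xi(c_i)$ is bounded, $G$ acts faithfully, and $m_n\to\infty$, the spines must eventually land at distinct vertices. That does not follow: two distinct spines can track the \emph{same} ray forever. For instance $b(1)$ and $b(1)^{b(2)}$ both restrict to $b_1$ at the first vertex of level~$1$, then both restrict to $b_2$ at the first vertex of level~$2$, and so on down the leftmost ray. Neither faithfulness of the action nor the growth of $m_n$ forces a separation. Moreover, even if spines did separate, your claim that ``one more round of decomposition'' lands everything in cases 1--3 is not justified either: a single spine restricted to a vertex has the form $h^{-1}b_n^q h$ with $h\in G_n$, which after one step is a product of several $b_n$-conjugates and need not be $1$, a power $b_n^t$, or $a_n^qb$ --- it can fall back into case~4.

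The paper avoids these issues with a word-length argument: it writes each case-4 component $d_{i,j}$ as a product of $b_i$-conjugates, observes that if a child $d_{i+1,h}$ is again of case~4 then $d_{i,j}$ had to contain a subword of the form $b_i(c)^{b_i(c-1)^q}$ producing that non-rooted $b_{i+1}$-conjugate, and concludes $|d_{i+1,h}| < |d_{i,j}|-1$. Since word lengths are non-negative integers strictly decreasing along the recursion, the process must terminate at some level $n$. You should replace your spine-separation claim with a monovariant of this kind --- the point is not that the spines spread out, but that whatever complexity survives to the next level has to be strictly shorter.
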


\begin{proof}
From Lemma \ref{lemma_cistab} we have $c_i=\left(g_1, \dots, g_{m_{i}}\right) \in G_{i} \times \dots \times G_{i}$ with
$g_j = a_i^{q_j} \prod_{k=0}^{u_j} b_i\left(r_{i,k}\right)^{f_{i,k}}$ and $q_j,f_{i,k} \in \mathbb{Z}, u_j \in
\mathbb{N}, r_{i,k} \in \left\{1, \dots, l_i\right\}$. If $g_i$ is an element of $B \leq G_{i+1} \times \dots \times G_{i+1}$ but not of the form $b_i^t$ for some $t \in \mathbb{Z}$ then $d_{i,j}$ was such that $d_{i,j} = \left(d_{i+1,1+(j-1) l_i},\dots, d_{i+1,j \cdot l_i}\right) \in \St_G(i+1)$. Assume that at least one $d_{i+1, h}$ is again of this form, the forth case. Then $d_{i+1,h} = a_{i+1}^{q_h} \prod_{s=1}^{y_h} b_{i+1}\left(f_{h,s}\right)^{z_{h,s}}$ with $q_h, z_{h,s} \in \mathbb{Z}$,
$y_h \in \mathbb{N}$ and $f_{h,s} \in \left\{1, \dots, l_i\right\}$. We assume that not all $f_{h,s}$ are equal to $1$
and that $q_h =0$ to eliminate cases $2$ and $3$. However, if there exists a $f_{h,s_0} \neq
1$ then $d_{i,j}$ was such that $b_{i+1}\left(f_{h,s_0}\right) = b_{i}(c)^{b_{i}(c-1)^q}$ for some $c\in \left\{1,
\dots, l_{i-1}\right\}$ and some $q \in \mathbb{Z}\backslash \{0\}$. This yields that the word lengths satisfy
$|d_{i+1,h}| < |d_{i,j}|-1$ and hence there exists a level $n$ such that all $d_{n,m}$ fall into one of the first three
cases.
\end{proof}

Write $\left\lfloor i/j\right\rfloor$ for the biggest integer $q$ such that $q \leq i/j$.

\begin{cor}\label{cor_commShape}
For every $d_{i,j}$ in $c_i$ that is of the second or third type we have that either \[d_{i-1,\left\lfloor j/l_i\right\rfloor}=a^q b
\textnormal{ or } d_{i-1,\left\lfloor j/l_i\right\rfloor}^{d_{i-2,\left\lfloor j/\left(l_{i-1} l_i\right)\right\rfloor}}= a^q b\]
with $q \neq 0 \mod l_i$, hence at least one of the two was of type $3$.
\end{cor}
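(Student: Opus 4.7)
The natural starting point is the recursive identity $c_i = [c_{i-1}, c_{i-1}^{c_{i-2}}]$ combined with Lemma \ref{lemma_cistab}, which places $c_{i-1}$ in $\rst_G(i-1)$ and $c_{i-2}$ in $\rst_G(i-2)$. Since both factors of the commutator stabilise level $i-1$, I can read off $c_i$ vertex by vertex at that level: at each vertex $p$ of level $i-1$, writing $g$ for its level-$(i-2)$ ancestor and $d_{i-2,g} = a_{i-2}^{r}\,(\beta_1,\dots,\beta_{l_{i-2}})$, the entry of $(c_{i-1})^{c_{i-2}}$ at $p$ is $\beta_p^{-1}\, d_{i-1,p-r}\, \beta_p$ (indices mod $l_{i-2}$). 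This is what the paper abbreviates as $d_{i-1,p}^{d_{i-2,g}}$, and the restriction of $c_i$ to the subtree below $p$ is then the single commutator $[d_{i-1,p},\, \beta_p^{-1} d_{i-1,p-r}\, \beta_p]$.

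The decisive observation is the dichotomy in Proposition \ref{prop_commShape}: types $1$, $2$ and $4$ all lie in $B_{i-1}$, while type $3$ is precisely the case of non-trivial $a_{i-1}$-rotation. I would argue by contraposition: suppose neither $d_{i-1,p}$ nor the shifted sibling $d_{i-1,p-r}$ is of type $3$, i.e.\ both lie in $B_{i-1}$. As $B_{i-1}\triangleleft G_{i-1}$, the conjugate $\beta_p^{-1} d_{i-1,p-r} \beta_p$ also lies in $B_{i-1}$, so the commutator at vertex $p$ lies in $B_{i-1}'$. By Corollary \ref{cor_BnDerived} applied one level down, $B_{i-1}' = N_i \times \cdots \times N_i$, so every one of its level-$i$ entries lies in $N_i$.

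Now $N_i$ is by definition the kernel of the sum-of-exponents map on $b_i(1),\dots,b_i(l_{i-1})$, hence contains neither a non-trivial power $b_i^t$ (whose exponent sum is $t\neq 0$) nor any element of the form $a_i^q b$ with $q\neq 0 \bmod l_i$ (since $a_i \notin B_i$). Consequently each such entry is of type $1$ or $4$, contradicting the assumption that $d_{i,j}$ is of type $2$ or $3$. Therefore at least one of $d_{i-1,p}$ and $d_{i-1,p-r}$ must be of type $3$, which is exactly the dichotomy asserted in the statement.

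I expect the main obstacle to be purely notational: identifying the shorthand $d_{i-1,p}^{d_{i-2,g}}$ with the shifted-and-conjugated sibling described above, and checking that when $d_{i-2,g}$ is not itself of type $3$ (so $r=0$) the two alternatives in the corollary collapse to a single statement about $d_{i-1,p}$, while when $d_{i-2,g}$ is of type $3$ they genuinely point to two different siblings among the children of $g$. Once this bookkeeping is settled, the algebraic core, namely the normality of $B_{i-1}$ in $G_{i-1}$ together with the description of $B_{i-1}'$ via Corollary \ref{cor_BnDerived}, delivers the corollary immediately.
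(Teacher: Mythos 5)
Your proof is correct and follows essentially the same route as the paper: argue by contraposition that if neither $d_{i-1,p}$ nor its $d_{i-2,g}$-conjugate is of type $3$, then both lie in the normal subgroup $B_{i-1}$, the commutator at vertex $p$ lands in $B_{i-1}'$, and this rules out types $2$ and $3$ one level down. The only cosmetic difference is in the last step: the paper chains through $B_i' \leq G_i'' \leq \rst_G(i+1) \leq \St_G(i+2)$ and invokes the fact that types $2$ and $3$ fail to stabilise that deep, whereas you use the identification $B_{i-1}' = N_i \times \cdots \times N_i$ from Corollary~\ref{cor_BnDerived} and rule out $b_i^t$ directly by its non-zero exponent sum (and $a_i^q b$ because it leaves $B_i$). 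Your variant is arguably a little more self-contained on the type $2$ exclusion, but the underlying mechanism is the same.
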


\begin{proof}
If $h,k$ are of type $1,2$ or $4$ then $h,k \in B_i \times \dots \times B_i$ and hence \[[h,k] \in B_i' \times \dots
\times B_i' \leq G_i'' \times \dots \times G_i'' \leq \rst_G(i+1) \leq \St_G(i+2)\] and hence cases $2$ and $3$ are
impossible.
\end{proof}

\begin{theorem}\label{thm_freeSubgroups}
Assume that the defining sequence $\left\{ l_i\right\}$ satisfies $l_i \geq \left(25l_{i-1}\right)^{3 \prod_{j=0}^{i-1}
l_j}$. Then $G$ has no free subgroup of rank $2$.
\end{theorem}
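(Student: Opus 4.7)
The strategy is to show that for every $g_1, g_2 \in G$, the iterated commutator construction $c_i$ eventually collapses: specifically, that $[c_N, c_N^{c_{N-1}^m}] = 1$ in $G$ for some level $N$ and integer exponent $m$ determined by $\xi(g_1) + \xi(g_2)$. Since $[c_N, c_N^{c_{N-1}^m}]$ is a non-trivial element of the free group $F_2$ when regarded as a word in formal variables $x, y$ specialising to $g_1, g_2$, this gives the desired relation $w(g_1, g_2) = 1$.

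First I would assemble the ingredients already established: the bound $\xi(c_i) \leq 5^i(\xi(g_1)+\xi(g_2))$ on the number of spines, the inclusion $c_i \in \rst_G(i)$ from Lemma \ref{lemma_cistab}, and the structural normal form of Proposition \ref{prop_commShape}. For $N$ sufficiently large, every component $d_{N,j}$ of $c_N$ falls into type $1$, $2$, or $3$. Let $S_N \subseteq \{1,\dots,m_N\}$ be the coordinates at which $d_{N,j}$ is of type $3$; by Corollary \ref{cor_commShape} these are the only positions that can drive the next commutator to be non-trivial, and $|S_N|$ is bounded by a constant multiple of $\xi(c_N)$.

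The heart of the argument is a shift step. Each level-$N$ coordinate sits inside a block of $l_{N-1}$ siblings, and $c_{N-1}$ acts on such a block by the cyclic rotation determined by the $a_{N-1}$-exponent of the relevant type-$3$ component of $c_{N-1}$. Inside each active block, $S_N \cap \text{block}$ has size at most $5^N(\xi(g_1)+\xi(g_2))$, which the hypothesis $l_i \geq (25 l_{i-1})^{3 m_i}$ makes vastly smaller than $l_{N-1}$ once $N$ is large. Applying Proposition \ref{prop_shift} block by block and noting that only finitely many values of $m$ are excluded per block, I would pick a single $m$ so that $c_{N-1}^m$ shifts $S_N \cap \text{block}$ off itself simultaneously in every active block. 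The extreme growth condition on $(l_i)$ is precisely what leaves enough unused residues modulo $l_{N-1}$ for such a simultaneous $m$ to exist.

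With this $m$ in hand, $c_N^{c_{N-1}^m}$ has its type-$3$ positions disjoint from those of $c_N$ on level $N$. A component-by-component check then yields $[c_N, c_N^{c_{N-1}^m}] = 1$: at coordinates outside both supports the two factors are powers of the same $b_N$ and commute, and at coordinates in exactly one support the other factor is a commuting $b_N$-power or the identity. The main obstacle I anticipate is the final bookkeeping — showing that the block-by-block shifts can be aligned by a single exponent $m$, and verifying that the type-$2$ background components do not spoil the commutativity that the disjointness of type-$3$ supports is supposed to provide. This is the step where the quantitative tower condition on $(l_i)$ earns its keep.
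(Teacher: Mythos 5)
Your proposal has the right ingredients (the spine bound $\xi(c_i) \leq 5^i(\xi(g_1)+\xi(g_2))$, the containment $c_i \in \rst_G(i)$, Proposition~\ref{prop_commShape}, and Proposition~\ref{prop_shift} as the shifting device), and your reading of the role of the tower hypothesis is correct in spirit. But the central step --- the claim that a single well-chosen exponent $m$ makes $\bigl[c_N, c_N^{c_{N-1}^m}\bigr]=1$ outright --- is not what the argument can deliver, for two separate reasons, and the paper's proof is structured precisely to work around them.

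First, you assume that $c_{N-1}$ acts by a nontrivial rotation on every block that contains an active (type-2 or type-3) coordinate of $c_N$. Corollary~\ref{cor_commShape} guarantees less than this: the vertex immediately above an active coordinate of $c_k$ carries a rooted decoration $a^q b$, $q\not\equiv 0$, coming \emph{either} from $c_{k-1}$ \emph{or} from $c_{k-1}^{c_{k-2}}$. On blocks where the rotation lives only in $c_{k-1}^{c_{k-2}}$, conjugating by $c_{N-1}^m$ does not move the offending coordinates at all. The paper therefore keeps both conjugators in play, forming, vertex by vertex, either $d=\bigl[c_k, c_k^{c_{k-1}^m}\bigr]$ or $e=\bigl[c_k, c_k^{(c_{k-1}^{c_{k-2}})^m}\bigr]$ as dictated by which of the two alternatives of Corollary~\ref{cor_commShape} holds there.

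Second, even where the disjointness you want has been arranged, $c_N^{c_{N-1}^m}$ is not just a positional shift of $c_N$: the components of $c_{N-1}^m$ have the shape $a^{t}\!\cdot\!b$ with $b\in B_{N-1}$, and the $b$-part conjugates the level-$N$ entries of $c_N$ by elements of $G_N$ before they land in their new positions. A $b_N$-power conjugated by a general element of $G_N$ is no longer a $b_N$-power, so the ``component-by-component check'' at the end of your proposal does not give commutativity at coordinates in one support, nor even at coordinates outside both. This is why the paper only concludes $d|_v=1$ (or $e|_v=1$) at the \emph{single} vertex $v$ it is currently processing, and then recurses on the smaller set $D\setminus\{d_{i_1,j_1}\}$. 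Each pass kills one vertex but multiplies the spine count by a factor of order $5\,l_{k-1}$; after at most $m_r$ passes this balloons to $s=(5\,l_{r-1})^{m_r}s_k$, and it is to absorb \emph{this} iterated blow-up --- not a single shift --- that the hypothesis must be the tower bound $l_i\geq(25\,l_{i-1})^{3m_i}$. A genuine one-shot argument, if it worked, would only need $l_{N-1}$ polynomially large in the spine count; the strength of the hypothesis is a symptom of the inherently iterative nature of the construction.

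So the gap is concrete: the single commutator $\bigl[c_N, c_N^{c_{N-1}^m}\bigr]$ need not vanish, both because $c_{N-1}$ may fail to rotate some active blocks (one must sometimes substitute $c_{N-1}^{c_{N-2}}$), and because the conjugation twists the surviving coordinates. The fix is the paper's one-vertex-at-a-time recursion with the two alternative conjugators, together with the bookkeeping that shows the word stays non-trivial in $F_2$ and that $l_r$ is always large enough to execute the next shift.
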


\begin{proof}
Let $g_1, g_2 \in G$. Then we construct a non-trivial word $w_{g_1,g_2}(x,y)$ such that  we have $w_{g_1,
g_2}\left(g_1, g_2 \right)=1$. Set $s_i = 5^{i} \left(\xi\left( g_1 \right) + \xi\left( g_2
\right)\right)$, the number of
spines in the commutator $c_i$ as defined above. Find a level $k$ such that \[s_0 = \xi\left( g_1
\right) + \xi\left( g_2 \right) \leq 5^k\] and further use the fact that $2k + 1 \leq m_k$ for all $k \geq 0$. This
implies \[2 s_k^{3} \leq 2 \cdot 5^{3k} s_0^3 \leq 2 \cdot 5^{3k} 5^{3k} \leq 5^{6k+1} \leq 5^{3m_k} \leq l_k.\]
Corollary \ref{cor_commShape} yields that every non-trivially decorated vertex in $c_k$ has a rooted
decoration on the vertex immediately above it. Write $c_k = \left(d_{i_1,1}, \dots, d_{{i_r},y}\right)$ where different
$d_{i,j}$ will now in general lie in different levels $i_n$. Denote by \[D =
\left\{d_{i,j} | d_{i,j} \textnormal{ occurs in } c_k\right\}\] and assume an ascending lexicographic order with respect to the double indices $\left\{i,j\right\}$. Let $j_D =
|D|$. Pick the first element $d_{i_1, j_1}$ in $D$ and let $v$ be the $j_1$-th vertex of
 level $i_1$, the one $d_{i_1, j_1}$ acts on. By Corollary \ref{cor_commShape} we have either $c_{k-1}|_v = a_{k-1}^t b$
or $c_{k-1}^{c_{k-2}}|_v = a_{k-1}^t b$ with $t \neq 0 \mod l_{k-1}$ and $b \in B_{k-1}$. Assume that we can find $m$
such that $mt \equiv q \mod l_i$ for $q$ from Proposition \ref{prop_shift}. 
Then either \[d= \left[c_k, c_k^{c_{k-1}^m}\right] \quad\hbox{or}\quad e = \left[c_k,
c_k^{\left(c_{k-1}^{c_{k-2}}\right)^m}\right]\] is such that $d|_v = 1$ or $e|_v = 1$ and we get $\xi\left( d \right)
\leq 5 l_{k-1} \xi\left( c_k \right)$ and $\xi(e) \leq 5 l_{k-1}\xi\left( c_k \right)$. Repeat this with $D_c =
D\backslash \left\{d_{i_1,j_1}\right\}$ where we have that $j_{D_c} < j_D$ until $j_{D_c}=0$.

We now have to justify that we can find such a power $m$ for all elements in $D$. At most all $m_k$
vertices on level $k$ have non-trivial decoration. Because of the recursive case of Proposition
\ref{prop_commShape} we could have to go further down on some parts of the tree.

In the worst case we have to go to level $r \geq k$ for every $d_{i,j} \in D$. Then we have $m_r$ vertices to look at
and powers less than $l_{r-1}$, leading to at most $s = \left(5 l_{r-1}\right)^{m_r} s_k$ spines. These
spines will have to be shifted among $l_r$ vertices. We need $2s^2$ places to perform the shift where the factor $2$
occurs because every spine has a rooted element next to it and hence actually decorates two places. We need to make
sure that the smallest possible size of the biggest gap between spines is at least $2 s^2$. This size is at least
$l_r/s$ and hence we require $l_r \geq 2 s^3$ and so $l_r \geq 2 \left(\left(5 l_{r-1}\right)^{m_r} s_k \right)^3$. The
last term is less than or equal to $2 \left(\left(5 l_{r-1}\right)^{m_r} s_r
\right)^3$. Hence it is sufficient to have 
\begin{equation}\label{eq_l_m}
l_r \geq 2 s_r^3 \left(5l_{r-1}\right)^{3 m_r}.
\end{equation}
By our hypothesis on $k$ that $\xi\left(g_1\right) + \xi\left(g_2 \right) \leq 5^k \leq 5^r$ we get that $s_r = 5^r
\left( \xi\left(g_1\right) + \xi\left(g_2 \right)\right) \leq 5^{2r}$ and hence $2 s_r^3 \leq 2 \cdot 5^{6r} \leq
5^{6r+1}$. Hence the
last term in (\ref{eq_l_m}) is less than or equal to $\left(25l_{r-1}\right)^{3m_r}$ because we have $2r + 1 \leq m_r$.
This
yields that the
procedure described above will result in a non-trivial word in $\left<g_1, g_2 \right>$ and hence $G$ cannot contain a
non-abelian free subgroup.
\end{proof}

This immediately implies that $G$ is cannot be large in this case. However, we can prove for any coprime sequence $l_i$
with $l_i \geq 7$ that $G$ is not large:

\begin{theorem}\label{thm_large}
The group $G$ is not large.
\end{theorem}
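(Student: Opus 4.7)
The plan is to argue by contradiction: suppose there is a finite-index subgroup $H \leq G$ and a surjection $\phi : H \twoheadrightarrow F$ with $F$ a non-abelian free group. Passing to the normal core $K = \bigcap_{g \in G} H^g$ and observing that $\phi(K)$ has finite index in $F$ and so is itself non-abelian free by the Nielsen–Schreier formula, I may assume $H$ is normal in $G$.

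By Theorem \ref{lemma_grigor} applied to $H$ we have $\rst_G(N)' \leq H$ for some $N$, and hence $\rst_G(M)' \leq \rst_G(N)' \leq H$ for all $M \geq N$. Fix such an $M$ and examine $\phi(\rst_G(M)')$. If this image is trivial then $\phi$ factors through $H/\rst_G(M)' \leq G/\rst_G(M)'$, which is a proper quotient of $G$ and therefore soluble by Proposition \ref{prop_quotientSoluble}, so $F$ would be soluble, contradicting its non-abelian freeness.

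Thus $\phi(\rst_G(M)') \neq 1$. By Lemma \ref{lemma_Gder} and Proposition \ref{cor_rstdash_fg}, $\rst_G(M)' = \prod_{i=1}^{m_{M+1}} A_i$, where each $A_i$ is a copy of $G'_{M+1}$ attached to the $i$-th vertex $v_i$ of level $M+1$. The factors commute pairwise, so the images $\phi(A_i)$ commute pairwise in $F$, and the whole product $\phi(\rst_G(M)')$ is normal in $F$ because $\rst_G(M)'$ is normal in $G$. The free-group observation I will need is that if pairwise commuting subgroups of a non-abelian free group generate a non-trivial normal subgroup, then exactly one of them is non-abelian and non-trivial while the others are trivial: in a free group the centraliser of any non-trivial element is cyclic, and the intersection of the centralisers of two non-commuting elements is trivial, so if any $\phi(A_i)$ is non-abelian the remaining $\phi(A_j)$ must vanish; the purely-abelian alternative is excluded because a non-abelian free group admits no non-trivial normal abelian subgroup. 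Hence there is a unique $i_M$ with $\phi(A_{i_M})$ non-abelian and $\phi(A_j) = 1$ for $j \neq i_M$.

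The finishing step combines $H$-equivariance with transitivity of $G$ on level $M+1$. For any $h \in H$, $h A_{i_M} h^{-1}$ is some factor $A_{\sigma_h(i_M)}$ whose $\phi$-image $\phi(h)\phi(A_{i_M})\phi(h)^{-1}$ is still non-abelian, so by uniqueness $\sigma_h(i_M) = i_M$, meaning $H \leq \St_G(v_{i_M})$. Since $H$ is normal in $G$ and $G$ is transitive on level $M+1$, intersecting over $G$-conjugates yields $H \leq \bigcap_{g \in G} \St_G(v_{i_M})^g = \St_G(M+1)$. As $M \geq N$ was arbitrary, $H \leq \bigcap_{M} \St_G(M) = 1$ (the action of $G$ on $T$ being faithful), contradicting the fact that $H$ has finite index in the infinite group $G$. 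The step I expect to be the main obstacle is the free-group centraliser lemma together with cleanly leveraging normality of $H$ and transitivity of $G$ on the level to pass from the single-vertex stabiliser $\St_G(v_{i_M})$ to the whole level stabiliser $\St_G(M+1)$.
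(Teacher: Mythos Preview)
Your proof is correct, but it takes a considerably longer route than the paper's. The paper's argument is a three-line application of Proposition~\ref{prop_quotientSoluble}: since $H$ surjects onto a non-abelian free group, it surjects onto $A_5$; take the $G$-core of the kernel of $H\to A_5$, which is a non-trivial normal subgroup $N_0\lhd G$ of finite index; then $G/N_0$ is soluble by Proposition~\ref{prop_quotientSoluble}, yet $H/N_0$ has $A_5$ as a quotient, which is impossible in a soluble group. The key trick is to replace the free target by a \emph{finite} non-soluble target so that the kernel already has finite index and its core is automatically non-trivial.

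Your approach instead keeps the free target and exploits the direct product decomposition $\rst_G(M)'=\prod_i G'_{M+1}$ together with centraliser properties of free groups to force $H\le\St_G(M+1)$ for all large $M$. This is a genuinely different mechanism: you never invoke a finite simple group, and you use the tree structure and transitivity of $G$ in an essential way. The argument is sound (the reductions to normal $H$, the free-group lemma that pairwise-commuting subgroups generating a non-trivial normal subgroup must have a unique non-abelian factor, and the passage from $\St_G(v_{i_M})$ to $\St_G(M+1)$ via normality and transitivity all go through as you describe), but it is substantially more work than needed. In particular, you already use Proposition~\ref{prop_quotientSoluble} to dispose of the case $\phi(\rst_G(M)')=1$; the paper's insight is that one can arrange to be in essentially that case from the start by composing with $F\twoheadrightarrow A_5$.
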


\begin{proof}
Assume for a contradiction that $G$ is large. Then there exists a finite index subgroup $H$ that maps onto the
non-abelian free group of rank $2$, hence also onto the alternating group $A_5$. Denote the kernel of the canonical
map $H \rightarrow A_5$ by
$N \leq H$. Then
$N = \bigcap_{g \in G} N^g$ is a proper normal subgroup of $G$. The quotient $G/N$ is soluble by Proposition
\ref{prop_quotientSoluble} and hence cannot have a section isomorphic to $A_5$.
\end{proof}


\begin{thebibliography}{11}

\bibitem{Bartholdi} Bartholdi, L.; Grigorchuk, R. I.; Sunik, Z. \emph{Branch groups.} Handbook of
algebra, Vol.
3, 989-1112, North-Holland, Amsterdam, 2003.

\bibitem {erschler} Bartholdi, L.; Erschler, A. \emph{Groups of given intermediate word growth.} arXiv:1110.3650

\bibitem {bolobas} Bollobas, B. \emph{Modern graph theory.} Graduate Texts in Mathematics, 184. Springer-Verlag, New
York, 1998. xiv+394 pp. ISBN: 0-387-98488-7

\bibitem {bondarenko} Bondarenko, I. V. \emph{Finite generation of iterated wreath products.} (English summary) Arch.
Math. (Basel) 95 (2010), no. 4, 301-308.

\bibitem {brieussel1} Brieussel, J. \emph{Amenability and non-uniform growth of some directed automorphism groups of a
rooted tree.} Math. Z. 263 (2009), no. 2, 265-293.

\bibitem {brieussel_2} Brieussel, J. \emph{Growth behaviors in the range $e^{r^\alpha}$.}  arXiv:1107.1632v1

\bibitem {knuth} Graham, R. L., Knuth, D. E., Patashnik, O. \emph{Concrete mathematics: a foundation for computer science} Second edition. Addison-Wesley.

\bibitem {grigor_1} Grigorchuk, R. I. \emph{Degrees of growth of finitely generated groups and the theory of invariant
means.} (Russian) Izv. Akad. Nauk SSSR Ser. Mat. 48 (1984), no. 5, 939-985.

\bibitem {grigor_bg} Grigorchuk, R. I. \emph{Just infinite branch groups.} New horizons in pro-p groups, 121-179, Progr.
Math., 184, Birkhauser Boston, Boston, MA, 2000.

\bibitem {grigor_sunic} Grigorchuk, R. I.; Sunik, Z. \emph{Asymptotic aspects of Schreier graphs and Hanoi Towers
groups.} C. R. Math. Acad. Sci. Paris 342 (2006), no. 8, 545-550, 20E08

\bibitem {grigor_zuk} Grigorchuk, R. I.; Zuk, A. \emph{On a torsion-free weakly branch group defined by a three state automaton.} Internat. J. Algebra Comput. 12 (2002), no. 1-2, 223-246.

\bibitem {lackenby} Lackenby, M. \emph{Covering spaces of 3-orbifolds.} Duke Math. J. 136 (2007), no. 1, 181-203.

\bibitem {nekra} Nekrashevych, V. \emph{Free subgroups in groups acting on rooted trees.} Groups Geom. Dyn. 4 (2010),
no. 4, 847-862.

\bibitem {ribenboim} Ribenboim, P. \emph{The little book of bigger primes.} Second edition. Springer-Verlag, New
York, 2004.

\bibitem {robinson} Robinson, D. J. S. \emph{A course in the theory of groups.} Second edition. Graduate Texts in
Mathematics, 80. Springer-Verlag, New York, 1996. xviii+499 pp. ISBN: 0-387-94461-3, 20-01

\bibitem {segal_fifg} Segal, D. \emph{The finite images of finitely generated groups.} Proc. London Math. Soc. (3) 82
(2001), no. 3, 597-613.

\bibitem {wilson} Sidki, S.; Wilson, J. S. \emph{Free subgroups of branch groups.} Arch. Math. (Basel) 80 (2003), no.
5, 458-463.

\bibitem {wilson2} Wilson, J. S. \emph{The gap in the growth of residually soluble groups.} Bull. Lond. Math. Soc. 43 (2011), no. 3, 576-2013582.

\bibitem {woryna} Woryna, A. \emph{The rank and generating set for iterated wreath products of cyclic groups.} Comm.
Algebra 39 (2011), no. 7, 2622-2631.

\end{thebibliography}
\end{document}